\documentclass[11pt,reqno]{article}
\usepackage{amsmath, amssymb, amsthm, mathrsfs}
\usepackage{amsfonts}
\usepackage{cases}
\usepackage{graphicx}
\usepackage{xcolor}
\usepackage[margin=0.8in]{geometry}
\usepackage[utf8]{inputenc}
\usepackage{verbatim}
\usepackage{caption,subcaption}
\usepackage[colorlinks,citecolor=blue,urlcolor=blue,linkcolor=blue]{hyperref}
\usepackage{bm}
\usepackage{algorithm}
\usepackage{algpseudocode}
\usepackage[normalem]{ulem}
\makeatletter
\renewcommand{\fnum@algorithm}{\fname@algorithm}

\numberwithin{equation}{section}
\newtheorem{Definition}{Definition}[section]
\newtheorem{Remark}{Remark}[section]
\newtheorem{Theorem}{Theorem}[section]
\newtheorem{Lemma}{Lemma}[section]

\newtheorem{Corollary}{Corollary}[section]
\newtheorem{Assumption}{Assumption}[section]

\newcommand{\be}{\begin{equation}}
	\newcommand{\ee}{\end{equation}}
\newcommand{\bee}{\begin{equation*}}
	\newcommand{\eee}{\end{equation*}}
\newcommand{\bi}{\begin{itemize}}
	\newcommand{\ei}{\end{itemize}}

\usepackage{algorithm}
\usepackage{algpseudocode}
\usepackage{soul}

\allowdisplaybreaks

\usepackage{accents}

\def \rightarrow{{\to}}
\def \E{\mathbb{E}}

\def \N{\mathbb{N}}
\def \P{\mathbb{P}}

\def \R{\mathbb{R}}

\def \T{\mathbb{T}}

\def \Cc{{\mathcal C}}

\def \Ac{{\mathcal A}}
\def \Ec{{\mathcal E}}
\def \Pc{{\mathscr P}}

\def \Hc{\mathcal{H}}
\def \Mc{{\mathcal M}}
\def \eps{\varepsilon}
\def \Leb{\operatorname{\texttt{Leb}}}
\def \cone{\operatorname{\text{cone}}}
\def \ul{\operatorname{\textbf{ul}}}
\def \wg{\operatorname{\textbf{wg}}}

\def \bmpi{\bm{\pi}}

\title{Equilibrium for Time-Inconsistent Mean Field Games: A Systematic Analysis via Entropy Regularization}
\author{Erhan Bayraktar\thanks{Department of Mathematics, University of Michigan, Ann Arbor, MI, USA. \url{erhan@umich.edu}} \and
 Zhenhua Wang\thanks{Zhongtai Securities Institute for Financial Studies, Shandong University; Shandong Province Key Laboratory of Financial Risk, Jinan, Shandong, China. \url{zhenhuaw@sdu.edu.cn}}
\and Xiang Yu\thanks{Department of Applied Mathematics,  The Hong Kong Polytechnic University, Kowloon, Hong Kong. \url{xiang.yu@polyu.edu.hk}}
	\and Keyu Zhang\thanks{
	Department of Mathematical Sciences, Tsinghua University, Beijing, China. \url{zhangky21@mails.tsinghua.edu.cn}}}
\date{\vspace{-0.3cm}}

\begin{document}
	\maketitle

	\begin{abstract}
This paper studies the existence and approximation of equilibria for general time-inconsistent mean field game (MFG) problems in continuous time. To handle the intricate nonlocal equilibrium Hamilton–Jacobi–Bellman (EHJB) system arising from initial-time dependence, such as non-exponential discounting, we develop a vanishing entropy regularization approach. Using entropy regularization, we first characterize the regularized equilibrium through a coupled exploratory equilibrium HJB (EEHJB) equation and a law-dependent stochastic differential equation. By exploiting Schauder fixed-point arguments and tailored parabolic regularity estimates in a suitable functional space involving both value functions and measure flows, we establish the global existence of regularized equilibria under mild assumptions. We then establish convergence as the entropy regularization vanishes. By employing compactness arguments, Young measure techniques, and a duality tool for divergence-form Fokker–Planck equations, we prove that the regularized equilibria converge, up to subsequences, to a mean-field equilibrium of the original MFG. Furthermore, under entropy regularization, we propose a policy iteration algorithm and establish its convergence under short-time-horizon and weak-terminal-interaction conditions.

		\ \\
		\noindent\textbf{Keywords}: Time-inconsistent mean field games, entropy regularization, exploratory equilibrium HJB equation, convergence analysis, policy iteration convergence
        \ \\

		\noindent\textbf{2020 MSC}: 91A16, 60H30
	\end{abstract}

\section{Introduction}
Mean field game (MFG) theory, pioneered independently by Lasry and Lions \cite{lasry2007mean} and Huang, Malham\'e, and Caines \cite{huang2006large}, provides a powerful framework for analyzing and approximating strategic interactions in large populations. The theory of mean field models has developed rapidly over the past few decades. In the existing literature, there are two main approaches to establishing the existence of mean field equilibria. The first is the analytical approach, initially proposed by Lasry and Lions \cite{lasry2007mean}, which characterizes the MFE through a coupled system of Hamilton-Jacobi-Bellman (HJB) and Fokker-Planck (FP) equations. In this setup, the backward HJB equation determines the optimal value function of a representative agent given the flow of the population distribution, while the forward FP equation governs the macroscopic evolution of the population under the optimal control. The second approach, introduced by Carmona and Delarue \cite{carmona_mean_2013,carmona_probabilistic_2013}, is probabilistic and relies on the stochastic maximum principle, where the forward-backward partial differential equation (FBPDE) is replaced by a forward-backward stochastic differential equation (FBSDE). Both methods recast equilibrium existence as the well-posedness of a coupled forward-backward system, which often requires stringent regularity assumptions on model coefficients. Later, alternative probabilistic methods were developed to alleviate these strong model assumptions, inspired by relaxation techniques originally used to ensure the existence of solutions in classical stochastic control problems. Rather than seeking a classical equilibrium, these relaxation techniques formulate the problem as an optimization over the laws of controlled state processes (the compactification method, see, e.g., \cite{lacker2015mean,common_2016,campiFis, bo2025meanfieldgamecontrols}) or over occupation measures subject to linear constraints (the linear programming method, see, e.g., \cite{bouveret_mean-field_2020,dumitrescu_control_2021,GHZZ25,liang2025}), together with a mean field consistency condition. This formulation greatly facilitates existence proofs by requiring milder model assumptions.

The aforementioned studies on MFGs crucially assume time consistency in the sense that, given the flow of the population distribution, the problem reduces to a standard time-consistent stochastic control problem for a representative agent. However, time inconsistency commonly arises in a wide range of economic and financial applications, often due to non-exponential discounting adopted by the decision maker. In this setting, a policy deemed optimal today may no longer remain optimal at future dates. To address the failure of global optimality, a widely used alternative, pioneered by Strotz \cite{Strotz}, is to treat the time-inconsistent control problem as an intra-personal game among the decision maker's current and future selves and to seek a subgame-perfect Nash equilibrium. Over the past decade, a substantial body of literature has been devoted to single-agent time-inconsistent control problems (see, e.g., \cite{Bjork17,Yong2012,huang2021strong,bayraktar2025relaxed}). In discrete-time stochastic control, \cite{BayraktarHan2023MarkovEquilibrium} proves the existence of Markov equilibrium controls under inf-compactness conditions. Related sophisticated-equilibrium formulations have also appeared in other time-inconsistent problems; for instance, \cite{BayraktarHan2025EquilibriumTransport} develops an equilibrium transport problem with time-inconsistent costs under bicausal constraints. In the continuous-time setting under non-exponential discounting, equilibrium existence can be characterized either by an extended HJB equation in \cite{Bjork17}, formulated as a system of nonlinear PDEs, or equivalently by an equilibrium HJB (EHJB) equation in \cite{Yong2012}, formulated as a nonlinear and nonlocal PDE. Standard verification proofs based on the classical It\^{o} lemma are often invoked to establish the existence and characterization of equilibria, and therefore require classical solutions to the EHJB equation. Because of the complexity of this equation, most existing studies focus on specific models that admit explicit solutions and use a guess-and-verification procedure to derive equilibria. In the general case, \cite{lei_nonlocal_2023} investigated the short-time existence of classical solutions to the general EHJB equation under strong model assumptions, and \cite{lei_nonlocal_2024} established the existence of global solutions by imposing additional conditions on the model coefficients and cost functions. When these restrictive model assumptions are not fulfilled, classical solutions to the extended HJB or EHJB equation may fail to exist, leaving open questions about equilibrium existence.

On the other hand, entropy regularization has become popular in reinforcement learning (RL) because it encourages exploration during trial-and-error learning; see the comprehensive discussion in \cite{wang2020reinforcement} for continuous-time reinforcement learning. Recently, beyond the context of RL, entropy regularization has also attracted considerable attention in various game problems thanks to the explicit and usually unique Gibbs-form characterization of the best response control, which can significantly simplify the fixed-point analysis of equilibria in the regularized problem. To establish equilibria in the original game problem, a key step is to prove convergence as the entropy regularization vanishes. \cite{bayraktar2025relaxed} initiated this approach in studying time-inconsistent MDPs under general discounting and obtained the existence of relaxed equilibria for the intra-personal game in both discrete-time and continuous-time models; \cite{YuYuan26} investigated the time-inconsistent mean field stopping problem for a social planner in a discrete-time model and proved the existence of relaxed equilibria; \cite{YZZZ} used the vanishing entropy regularization approach for the major player's problem and established the existence of mean field equilibria for the discrete-time major-minor MFG of stopping; \cite{DDFX25} studied the continuous-time MFG of stopping in the time-consistent setting using the vanishing entropy regularization approach and addressed the existence of mean field equilibria; \cite{DZ25} tackled the time-inconsistent mean-variance stopping problem as an intra-personal game by establishing convergence as entropy vanishes. Recently, \cite{WYZZ2026} analyzed an infinite-horizon time-inconsistent control problem using this vanishing entropy regularization approach. They showed that entropy regularization improves regularity and significantly helps establish the existence of a classical solution to the exploratory equilibrium Hamilton-Jacobi-Bellman (EEHJB) equation. Furthermore, they proved that the classical solutions of the EEHJB equations converge, up to a subsequence, to a strong solution of the original EHJB equation as entropy tends to zero, and developed new verification arguments to conclude the existence of an equilibrium in the original problem without resorting to the existence of a classical solution.

Despite its success in the above time-inconsistent control problems and time-consistent game problems, whether entropy regularization can be used to study MFGs in the time-inconsistent setting remains an interesting open problem. In fact, time-inconsistent MFGs remain underexplored in the literature, with most existing studies focusing on linear-quadratic structures or specific models that admit explicit equilibria (see, e.g., \cite{ni2018,jun2021,Wang2023,liang2024}). Beyond these special setups, only a handful of works have analyzed the general existence of equilibria. For instance, \cite{BW2025} investigated time-inconsistent MFGs in a discrete-time, infinite-horizon setting and analyzed the convergence of $N$-player game equilibria toward the mean field limit; \cite{WZ2024} studied the so-called sharp equilibrium strategies for mean field stopping games and showed the existence of such equilibria in two classes of examples; \cite{LYZ2025} examined a class of time-inconsistent mean field control problems under non-exponential discounting, where all agents cooperate to optimize a collective objective, and characterized the equilibrium via an EHJB equation on the Wasserstein space. Recently, \cite{zhou2026existence} investigated the existence of time-consistent equilibria for general time-inconsistent stochastic games in discrete time with uncountable
state space.

This paper addresses general finite-horizon time-inconsistent MFGs in diffusion models by developing the method of vanishing entropy regularization. Specifically, when the population aggregation is given, the representative agent seeks an intra-personal Nash equilibrium as a best response to future selves. The mean field equilibrium is then formulated through a consistency condition with respect to the population aggregation. In sharp contrast to \cite{WYZZ2026} for the single-agent problem, this work requires a delicate analysis of the mean field state distribution for convergence as entropy tends to zero. It is also worth mentioning \cite{MeiZhu2022}, which investigates the existence of classical mean field equilibria in time-inconsistent MFGs, where the equilibrium policy takes values in the action space. Establishing the existence of such classical equilibria relies heavily on restrictive structural assumptions (see Remark \ref{rmk:comparison_literature} for further discussion). In contrast, we consider the general model setup and work with equilibria in the \textit{relaxed} sense, so that policies take values in the space of probability measures over the action space (see Definition \ref{def:equi.relaxed}). This paper conducts a systematic study of time-inconsistent MFGs through the entropy-regularized formulation: we establish the existence of equilibria in the original problem using the convergence of regularized equilibria, propose a policy iteration rule, and analyze its convergence to the regularized equilibrium.

We summarize the key theoretical contributions of the present paper as follows:

First, we formulate the time-inconsistent MFGs under Shannon entropy regularization and show that the regularized equilibrium (see Definition \ref{def:equi.reg}) is characterized by a backward EEHJB equation coupled with a forward law-dependent SDE (see \eqref{eq:cha.HJBentropy_prime}--\eqref{eq:cha.m}). The main contribution in this step lies in establishing the global well-posedness of this fully coupled system via a fixed-point argument. Compared to the single-agent setting in \cite{WYZZ2026}, we consider a finite-horizon problem with time-dependent mean field interactions. Consequently, the coefficients are inherently time-dependent, which forces us to analyze functions defined on the domain $\Delta_{[0,T]}\times \mathbb{R}^d$ with $\Delta_{[0,T]}:=\{(t,s):0\leq t\leq s\leq T\}$ due to the mismatch between the initial time $t$ and the running time $s$. To handle this class of functions, we introduce tailored $t$-norms (see \eqref{eq:t_norms} and \eqref{eq:t_norms2}). The existence proof relies on a carefully constructed fixed-point operator $\Phi_\lambda$ defined in \eqref{eq:Phi.fixedpoint}. By leveraging the sublinear growth of the entropy term in Lemma \ref{lm:entropy.ln}, we first derive H\"older estimates for the Gibbs-form operator $\pi(t,x,a):= \Gamma_{\lambda}(t,x, D_x w(t,x), m_t, a)$, provided that the input pair $(w,m)$ possesses sufficient regularity (see Lemma \ref{lm:pi.est}). Subsequently, we prove that the resulting auxiliary value function $V_{\lambda}^{\pi,m}$ serves as the classical solution to the linear parabolic PDE \eqref{eq:prop.iteraPDE} and satisfies certain uniform estimates, while the resulting measure flow $\mu^{\pi,m}$ preserves the same regularity as the input flow $m$ (see Lemma \ref{lm:iteration.estimateentropy}). In contrast to the single-agent setting, the mean field nature requires our fixed-point operator $\Phi_\lambda$ to incorporate an additional measure flow input. Building upon the established estimates, we identify a suitable compact subset $\mathcal{E}_{\lambda}$ that encompasses both sufficiently regular value functions and well-behaved measure flows. While the compact set for the value functions can be chosen following arguments similar to those in \cite{WYZZ2026}, a key technical contribution here is identifying the precise set for the measure flow component.
This careful construction ensures that $\Phi_{\lambda}$ is a continuous self-map on $\mathcal{E}_{\lambda}$ (see Lemma \ref{lm:Phi.continuous}). With these preparations, we invoke Schauder's fixed-point theorem and the verification argument to establish the existence of a regularized equilibrium.

Second, we carry out a delicate convergence analysis of the regularized equilibria toward an equilibrium of the original problem as the entropy parameter $\lambda \to 0$, thereby establishing the existence of a relaxed equilibrium in the original time-inconsistent MFGs. To this end, we consider a sequence $(V^n,\pi^n,m^n)$ satisfying the regularized system \eqref{eq:HJBn1}--\eqref{eq:consisn}. Using the estimates established in the previous step and Young measure theory, we extract a subsequence that converges to a candidate limit triplet $(V^\infty, \pi^\infty, m^\infty)$ (see Lemma \ref{lm:thm.C12andyoung}). A key distinction from the single-agent setting is that the sequence of auxiliary value functions $V^n$ is defined on $\Delta_{[0,T]}\times \mathbb{R}^d$. Consequently, subsequential convergence must be established in our tailor-made $t$-norms, which requires new arguments.
Furthermore, to show that the limit pair $(\pi^\infty,m^{\infty})$ constitutes an equilibrium of the original problem, we proceed in two core steps. The first step is a verification proof based on the convergence analysis, showing that $\pi^\infty$ satisfies the equilibrium condition \eqref{eq:def.equipi}. The limit $V^\infty$ is identified as a \textit{strong}, rather than classical, solution to the EHJB system \eqref{lim:eqhjb}--\eqref{lim:eqhjb2}, which makes standard verification arguments inapplicable. However, because each $V^{n_k}$ is a classical solution to the EEHJB equation, we can use the convergence result, together with delicate localization arguments, to conclude that $\pi^{\infty}$ satisfies the equilibrium condition \eqref{eq:def.equipi}. The second step is to identify $m^{\infty}$ as the law of the state process driven by the limit policy $\pi^{\infty}$, which is a central issue in the mean field model. The primary difficulty here stems from the fact that Young measure theory only provides weak-$\ast$ convergence of the controlled drift coefficients (see Lemma \ref{lm:thm.C12andyoung} (ii)). This weak convergence is insufficient to guarantee the convergence of the distributions via standard SDE stability arguments. To bridge this gap, we employ a duality approach that uses solutions to linear backward PDEs as test functions, shifting our focus to the Fokker-Planck equation in divergence form. Using the regularity theory for parabolic PDEs in divergence form and compactness arguments, we prove the strong $L^1$ convergence of the corresponding probability densities. This strong convergence allows us to rigorously identify the topological limit of the measure flows with the distribution of the state process driven by the limit policy, thereby addressing the issue of measure flow consistency.

Third, using entropy regularization,
we propose a Policy Iteration Algorithm (PIA) and analyze its convergence to the established regularized equilibrium. Starting from an arbitrary admissible policy, we show that the successive iterations under the operator $\Phi_\lambda$ converge to the fixed point. This provides an effective numerical method for attaining the regularized equilibrium through greedy iterations and may also provide part of the theoretical foundation for policy iteration algorithms in continuous-time RL when the model is unknown. Inspired primarily by recent results on PIA convergence for entropy-regularized stochastic control problems \cite{ma2025convergence,HYZ26}, we employ the Bismut-Elworthy-Li representation to derive useful spatial gradient estimates. Coupled with the stability analysis of the measure flows, we rigorously prove that the PIA acts as a strict contraction and converges to the regularized equilibrium, provided that the time horizon is sufficiently short and the terminal measure dependence is weak (see Theorem  \ref{thm:PIA_convergence}).
Note that, even in the time-consistent setting, existing studies on the convergence of PIA for MFGs rely on various restrictive model assumptions, such as separable Hamiltonians and measure-independent terminal costs \cite{TangSong2024,camilli2022rates,cacace2021policy}. For more general, non-separable models, convergence is known only under a similar small-time assumption, while terminal measure dependence is still excluded in \cite{lauriere2023policy}. Our work appears to be the first study to extend PIA convergence for general MFGs to the time-inconsistent setting under weaker model assumptions. We refer readers to Remark \ref{rmk:PIA_assumptions} for a detailed discussion.

Finally, we highlight that the vanishing entropy regularization approach can be viewed as an alternative to the relaxation techniques widely used in time-consistent MFGs. In our time-inconsistent setting, a representative agent no longer solves a standard global optimization problem but instead seeks an intra-personal equilibrium. This fundamental distinction prevents the direct application of classical relaxation arguments, which inherently rely on the minimization of a cost functional over a compact set. In this context, our vanishing entropy regularization approach serves as a vital bridge, providing a novel and feasible method for addressing the existence of relaxed equilibria in time-inconsistent MFGs.

The rest of the paper is organized as follows. Section \ref{subsect:nota} collects frequently used notation and preliminary results. Section \ref{sec:model} introduces the formulation of time-inconsistent MFGs in the continuous-time setting and the definition of relaxed equilibrium. Section \ref{sect:regexist} studies the problem under entropy regularization and establishes the existence of a regularized equilibrium. Section \ref{sec:convergence} carries out the convergence analysis, demonstrating that the limit of the regularized equilibria corresponds to an equilibrium of the original time-inconsistent MFGs. Finally, Section \ref{sect:pia} introduces a PIA in the regularized problem and establishes its convergence to the regularized equilibrium.

\subsection{Notations}\label{subsect:nota}
Let $\mathbb{N}$ be the set of all positive integers and $T>0$ be the finite horizon. In the $m$-dimensional Euclidean space $\R^m$,  we denote by $| \cdot|$ the Euclidean norm, by $\Leb(\cdot)$ the Lebesgue measure, and by $B_r(x)$ the ball centered at $x$ with radius $r$.

Given a function $w(t,x):[0,T]\times\R^d\rightarrow \R$, let $\partial_tw$ denote the right derivative with respect to the time variable $t$ and let $D_xw$ (resp. $D^2_xw$) denote the first (resp. second) partial derivative vector (Hessian) with respect to the space variable $x$. 
Let $D$ be a generic domain in $[0,T] \times \R^d$. We first recall the notations of parabolic H\"older spaces for $\alpha \in (0,1)$:
\begin{align*}
	\|w\|_{\Cc^0(D)} &:= \sup_{(t,x)\in D} |w(t,x)|= \|w\|_{L^\infty(D)},\quad
	[w]_{\Cc_{\alpha}(D)} := \sup_{\substack{(t,x)\neq (s,y)\in D\\ (|t-s|+|x-y|^2)\leq 1}} \frac{|w(t,x)-w(s,y)|}{(|t-s|+|x-y|^2)^{\alpha/2}}, \\
	\|w\|_{\Cc_{\alpha}(D)} &:= [w]_{\Cc_{\alpha}(D)}+\|w\|_{\Cc^0(D)}.
\end{align*}
Given a multi-index $a=(a_1,\cdots, a_d)$ with $|a|_{l_1}:= \sum_{i=1}^d a_i$, define $D_x^a w:= \frac{\partial^{|a|_{l_1}} w }{\partial^{a_1}_{x_1}\cdots \partial^{a_d}_{x_d}}$
and we say $w\in \Cc^{l, k}_{\alpha}(D)$
(resp. $w\in \Cc^{l, k}(D)$)
if
$$
\|w\|_{\Cc^{l, k}_{\alpha}(D)}:= \sum_{0\leq b\leq l, 0\leq |a|_{l_1}\leq k} \|\partial^b_t D^a_x w \|_{\Cc_{\alpha}(D)}<\infty\; \Big(\text{resp. $\|w\|_{\Cc^{l, k}(D)}:=  \sum_{0\leq b\leq l, 0\leq |a|_{l_1}\leq k}\|\partial^b_t D^k_xw \|_{\Cc^0(D)}<\infty$} \Big).
$$
Define $D_N(t_0, x_0):= ((t_0, t_0+N)\cap [0,T])\times B_N(x_0) $ and write $D_N:= D_N(0,0)$, for $N>0$.
We further define the following weighted  H\"older norm on $[0,T]\times \R^d$:
\begin{equation*}
	\|w\|_{\Cc^{l, k, \wg}_{\alpha}([0,T]\times \R^d)}:=\sum_{N\in \mathbb{N}} \frac{1}{2^N} \|w\|_{\Cc^{l, k}_{\alpha}(D_N)}.
\end{equation*}
Given $w(t,s,x)$ defined on the domain $\Delta_{[0,T]}\times \R^d$ with $\Delta_{[0,T]}:=\{(t,s):0\leq t\leq s\leq T\}$ and a spatial subset $Q \subset \R^d$, we further define the following ``$t$-norms":
\begin{equation}\label{eq:t_norms}
    \begin{aligned}
	\|w\|_{\Cc^{l, k}_{\alpha, [t_0, t_1]\times Q}} &:= \sup_{t\in[t_0, t_1]} \|w(t,\cdot,\cdot)\|_{\Cc^{l, k}_{\alpha}([t, T]\times Q)},\,
	\|w\|_{\Cc^{l, k}_{[t_0, t_1]\times Q}} := \sup_{t\in[t_0, t_1]} \|w(t,\cdot,\cdot)\|_{\Cc^{l, k}([t, T]\times Q)},\\
	\|w\|_{\widetilde{\Cc}^{l, k}_{\alpha,[t_0, t_1]\times Q}} &:= \sup_{t\in[t_0, t_1]} \|w(t,\cdot,\cdot)\|_{\Cc^{l, k}_{\alpha}([t, T]\times Q)} + \sup_{t\in[t_0, t_1]} \|\partial_t w(t,\cdot,\cdot)\|_{\Cc^{l, k}_{\alpha}([t, T]\times Q)},\\
	\|w\|_{\widetilde{\Cc}^{l, k}_{[t_0, t_1]\times Q}} &:= \sup_{t\in[t_0, t_1]} \|w(t,\cdot,\cdot)\|_{\Cc^{l, k}([t, T]\times Q)} + \sup_{t\in[t_0, t_1]} \|\partial_t w(t,\cdot,\cdot)\|_{\Cc^{l, k}([t, T]\times Q)}.
\end{aligned}
\end{equation}
When $Q=\R^d$, we simply omit the spatial domain in the subscript. 

We recall the Sobolev norm $\|w\|_{W^{1,2}_p(D)} := \sum_{2l+ |a| \leq 2} \|\partial^l_t D^a_x w \|_{L^p(D)}$ for a bounded domain $D$, and define the following ``uniformly local Sobolev norm" on $[t,T]\times \R^d$:
$$
\|w(t,\cdot,\cdot)\|_{W^{1,2,\ul}_p([t,T]\times\R^d)}:=\sup_{(s,x)\in [t, T]\times \R^d} \|w(t,\cdot, \cdot) \|_{W^{1,2}_p([s,s+1]\cap[0,T]\times B_1(x))}.
$$
We further define the ``$t$-Sobolev norm":
\begin{equation}\label{eq:t_norms2}
    \begin{aligned}
	\|w\|_{\widetilde{W}^{1,2}_{p,[t_0, t_1]}} &:= \sup_{t\in[t_0, t_1]}\|w(t,\cdot,\cdot)\|_{W^{1,2,\ul}_p([t,T]\times\R^d)},\\
	\|w\|_{\widehat{W}^{1,2}_{p,[t_0, t_1]}} &:= \sup_{t\in[t_0, t_1]}(\|w(t,\cdot,\cdot)\|_{W^{1,2,\ul}_p([t,T]\times\R^d)}+\|\partial_tw(t,\cdot,\cdot)\|_{W^{1,2,\ul}_p([t,T]\times\R^d)}). 
\end{aligned}
\end{equation}

Throughout the paper, we use the subscript of a norm to denote the functional space consisting of all functions for which that norm is finite (e.g., $w \in \widetilde{W}^{1,2}_{p,[t_0, t_1]}$ means $\|w\|_{\widetilde{W}^{1,2}_{p,[t_0, t_1]}} < \infty$). Let $0<\alpha<1$ be an arbitrarily fixed H\"older constant, and take the power $p$ in any Sobolev norm to be $\frac{d+2}{1-\alpha}$.

Let $(\Omega, \mathcal{F}, \mathbb{F}, \mathbb{P})$ be a complete filtered probability space  supporting an $m$-dimensional standard Brownian motion $W = \{W_t\}_{t \in [0, T]}$. Let $\mathcal{F}_0$ be a given $\sigma$-algebra which is independent of the Brownian motion. Let $\mathbb{F}=\left\{\mathcal{F}_t: 0 \leq t \leq T\right\}$, where $\mathcal{F}_t:=\sigma\left\{\mathcal{F}_0, W(s): 0 \leq s \leq t\right\} \vee \mathcal{N}_0$, and $\mathcal{N}_0$ is the set of all $\mathbb{P}$-null sets. Denote by $\mathscr{P}\left(\R^d\right)$ the space of probability measures on $\R^d$.

Let $L_{\mathcal{F}}^2(\R^d)$ be the collection of $\R^d$-valued random variables with finite second moment, i.e.,
\begin{equation*}
	L_{\mathcal{F}}^2(\R^d) := \left\{\xi: \Omega \rightarrow \R^d \mid \xi \text{ is } \mathcal{F}\text{-measurable with } \mathbb{E}|\xi|^2<\infty \right\}.
\end{equation*}
$L_{\mathcal{F}}^2(\R^d)$ is equipped with the norm $\|\xi\|_{L^2} := \left(\mathbb{E}|\xi|^2\right)^{\frac{1}{2}}$. For any $\xi \in L^2(\R^d)$, denote by $\operatorname{law}(\xi)$ the distribution of $\xi$.

Let $\mathscr{P}_2(\R^d)$ be the space of probability measures with finite second moments equipped with the Wasserstein-2 metric $\mathcal{W}_2(\cdot, \cdot)$, i.e.,
\begin{equation*}
	\mathcal{W}_2^2(\rho, \gamma) := \inf_{\pi \in \Pi^{\rho, \gamma}} \int_{\R^d \times \R^d} |x-y|^2 \pi(dx, dy),
\end{equation*}
where
\begin{equation*}
	\Pi^{\rho, \gamma} := \left\{\pi \in \mathscr{P}_2(\R^d \times \R^d) : \pi(dx, \R^d)=\rho(dx), \pi(\R^d, dy)=\gamma(dy)\right\}.
\end{equation*}
It is easy to see that
\begin{equation}\label{ineq:w2}
	\mathcal{W}_2^2(\operatorname{law}(\xi), \operatorname{law}(\eta)) \leq \|\xi-\eta\|_{L^2}^2.
\end{equation}
Let $\mathscr{P}_2^{\kappa, C}(\R^d)$ be a subset of $\mathscr{P}_2(\R^d)$ defined by
\begin{equation*}
	\mathscr{P}_2^{\kappa, C}(\R^d) := \left\{\rho \in \mathscr{P}_2(\R^d) : \int_{\R^d} |x|^{2+\kappa} \rho(dx) \leq C\right\}.
\end{equation*}
Note that $\mathscr{P}_2^{\kappa, C}(\R^d)$ is a compact subset of $\left(\mathscr{P}_2(\R^d), \mathcal{W}_2\right)$ for any $\kappa, C>0$.

Throughout the paper, we suppose that $\mathcal{F}_0$ is rich enough so that for any $\rho \in \mathscr{P}_2(\R^d)$, there exists a $\xi \in \mathcal{F}_0$ such that $\operatorname{law}(\xi)=\rho$.

Let $\mathscr{M} := C\left([0, T], \mathscr{P}_2(\R^d)\right)$ be the set of $\mathscr{P}_2(\R^d)$-valued continuous curves on $[0, T]$ equipped with the uniform metric $d$, i.e.,
\begin{equation*}
	d(\mu_1, \mu_2) := \sup_{0 \leq t \leq T} \mathcal{W}_2(\mu_1(t), \mu_2(t)).
\end{equation*}
Since $\left(\mathscr{P}_2(\R^d), \mathcal{W}_2\right)$ is complete, so is $(\mathscr{M}, d)$. Write $\mathscr{M}_{\nu} := \{\mu \in \mathscr{M} : \mu(0)=\nu\}$. {Throughout this paper, let $0<\kappa<\infty$ be an arbitrarily fixed constant.} Define a subset of $\mathscr{M}_\nu$ by
\begin{equation*}
	\mathscr{M}_\nu^{\kappa, C} := \left\{\mu \in \mathscr{M}_\nu \;\middle|\;
		 \sup_{0 \leq t \neq s \leq T} \frac{\mathcal{W}_2^2(\mu(t), \mu(s))}{|t-s|} \leq C,  \;\text{and } \mu(t) \in \mathscr{P}_2^{\kappa, C}(\R^d) \text{ for any } t \in[0, T]
\right\}.
\end{equation*}
By the well-known Arzel\`a-Ascoli lemma, $\mathscr{M}_\nu^{\kappa, C}$ is a compact and convex subset of $\mathscr{M}$.

	\section{Model Setup}\label{sec:model}
Let $U$ denote the action space, which is a bounded subset of $\R^\ell$ with $\Leb(U)>0$. We also denote by $\mathscr{P}(U)$ the set of all probability measures on $U$, and by $\mathscr{P}_c(U)$ the subset of $\mathscr{P}(U)$ consisting of all probability measures absolutely continuous with respect to the Lebesgue measure. For technical convenience, we focus on the model in which only the drift coefficient of the state process is controlled. The general case in which the diffusion coefficient is also controlled is left for future study.

{ Given a population flow $m\in\mathscr{M}_{\nu}$ with $\nu\in\mathscr{P}_{2+\kappa}(\R^d)$ for some $\kappa>0$, the dynamics of a single agent using a feedback strategy under $m$ are
\begin{equation*}
	dX^\pi_s = \left( \int_U b(s, X^\pi_s, m_s, a)\pi(s, X^\pi_s, a)da \right) ds + \sigma(s, X^\pi_s, m_s) dW_s 
\end{equation*}}
where $b:[0,T]\times \R^d\times \mathscr{P}_2(\R^d)\times U\rightarrow \R^d$, $\sigma:[0,T]\times \R^d\times \mathscr{P}_2(\R^d)\rightarrow \R^{d\times m}$. The payoff functional associated with applying $\pi$ under the population flow $m$ is then defined by
\begin{equation*}
	J^{\pi, m}(t,x) := \mathbb{E}_{t,x}\left[\int_t^T \left( \int_U r(l-t, X^{\pi}_l, m_l, a)\pi(l, X^{\pi}_l, a)da \right) dl + F(t, X^{\pi}_T, m_T) \right],
\end{equation*}
where $r:[0,T]\times \R^d\times \mathscr{P}_2(\R^d)\times U\rightarrow \R$ is the running reward and $F:[0,T]\times\R^d\times \mathscr{P}_2(\R^d)\rightarrow \R$ is the terminal reward. We further define the auxiliary function, for $(t,s,x)\in\Delta_{[0,T]}\times \R^d$, by
\begin{equation*}
	V^{\pi, m}(t,s,x) := \mathbb{E}_{s,x}\left[\int_s^T  \left( \int_U r(l-t, X^{\pi}_l, m_l, a)\pi(l, X^{\pi}_l, a)da \right) dl + F(t,X^{\pi}_T, m_T)\right].
\end{equation*}
Here, $J^{\pi, m}(t,x) = V^{\pi, m}(t,t,x)$ represents the payoff functional for entering the game at $(t,x)$, while $V^{\pi, m}(t,s,x)$ represents the payoff evaluated from a future time-state pair $(s,x)$.

For the rest of the paper, we use the notation
\begin{equation*}
	\tilde{f}(t,x,m,\varpi):= \int_U f(t,x,m,a)\varpi(da).
\end{equation*}
This convention applies to any generic function $f(t, x, m, a): [0,T]\times \R^d \times \mathscr{P}_2(\R^d)\times U\rightarrow \R$ and any generic distribution $\varpi\in \mathscr{P}(U)$.

\begin{Definition}
	The set of admissible policies, denoted by $\Ac$, is defined as the collection of all Borel measurable relaxed feedback policies $\pi: [0,T] \times \R^d \rightarrow \mathscr{P}(U)$.
\end{Definition}
\begin{Assumption}\label{assume.r}
	We assume that there exist constants $K_1, K_2, \eta > 0$ such that the following conditions hold for all $(t,x,m,a) \in [0,T]\times\mathbb{R}^d\times\mathscr{P}_2(\mathbb{R}^d)\times U$:
	\begin{itemize}
		\item[(i)] \textbf{Hölder regularity in $(t,x)$:}
		\begin{equation*}\label{eq:assume.integralr}
		\begin{aligned}
			&\|r(\cdot,\cdot,m, a)\|_{\Cc^{1,0}_{\alpha}([0,T]\times \R^d)}+\|b(\cdot,\cdot,m, a)\|_{\Cc_{\alpha}([0,T]\times \R^d)}\\
			&+\|\sigma(\cdot,\cdot,m)\|_{\Cc_\alpha([0,T]\times \R^d)}+\|F(\cdot,\cdot,m)\|_{\Cc^{1,2}_\alpha([0,T]\times \R^d)}\leq K_1.		\end{aligned}
	\end{equation*}

		\item[(ii)] \textbf{Uniform ellipticity:}
		\begin{equation*}
			\eta|\xi|^2 \leq \xi \sigma\sigma^T(t,x,m) \xi^T, \quad \forall \xi \in \mathbb{R}^d \setminus \{0\}.
		\end{equation*}

		\item[(iii)] \textbf{Spatial regularity:} $b$, $\sigma$, and $r$ are Lipschitz continuous in $x$ uniformly in the other arguments, i.e.,
		\begin{equation*}
		    |b(t,x,m,a)-b(t,y,m,a)|+|\sigma(t,x,m) - \sigma(t,y,m)|+|r(t,x,m,a)-r(t,y,m,a)| \leq K_2|x-y|, \quad \forall x,y \in \mathbb{R}^d.
		\end{equation*}

		\item[(iv)] \textbf{Regularity in measure:} $b$, $\sigma$, $r$, and $\partial_t r$ are Lipschitz continuous in $m$ uniformly in the other arguments, i.e.,
		\begin{align*}
			|b(t,x,m,a)-b(t,x,\rho,a)| &+ |\sigma(t,x,m)-\sigma(t,x,\rho)| +|r(t,x,m,a)-r(t,x,\rho,a)| \\&+ |\partial_t r(t,x,m,a)-\partial_t r(t,x,\rho,a)| \leq K_2 \mathcal{W}_2(m,\rho), \quad \forall m,\rho \in \mathscr{P}_2(\R^d)
		\end{align*}
		and $F, \partial_t F$ are continuous in $m$ uniformly in the other arguments.
	\end{itemize}
\end{Assumption}
\begin{Remark}
    Under the conditions of $b$ and $\sigma$ in Assumption \ref{assume.r}, for any admissible policy $\pi \in \Ac$, any measure flow $m \in \mathscr{M}_\nu$, and any initial condition $(t,x) \in[0,T) \times \R^d$, the controlled SDE
    \begin{equation*}
        dX^\pi_s = \tilde{b}(s, X^\pi_s, m_s, \pi(s, X^\pi_s)) ds + \sigma(s, X^\pi_s, m_s) dW_s, \quad X^\pi_t = x, \quad s \in [t,T],
    \end{equation*}
    admits a unique strong solution (see, e.g., \cite[Theorem 1]{veretennikov1981strong}). Moreover, the boundedness conditions on $r$ and $F$ ensure that $J^{\pi,m}(t,x)$ is always finite.
\end{Remark}

We give the following definition of relaxed equilibrium for the time-inconsistent MFG problem.
\begin{Definition}\label{def:equi.relaxed}
	A pair $(\pi^*, m^*)$, where $\pi^*\in\Ac$  and $m^*\in\mathscr{M}_{\nu}$, is called a (relaxed) mean-field equilibrium if the following conditions are satisfied:
	\begin{itemize}
		\item[(a)]{\textit{(Equilibrium response in the intra-personal game)}} For any $(t,x)\in[0,T)\times \R^d$ and $\pi'\in\mathcal{A}$,
		\begin{equation}\label{eq:def.equipi}
			\limsup_{\epsilon\to 0^+} \frac{J^{\pi'\otimes_{t,\epsilon} \pi^*, m^*}(t,x)- J^{\pi^*, m^*}(t,x)}{\epsilon}\leq 0,
		\end{equation}
		where
		\begin{equation}\label{eq:def.pipast}
			\pi'\otimes_{t,\epsilon} \pi^*(s,x)=\begin{cases}
				\pi'(s,x), & (s,x)\in[t,t+\epsilon]\times\R^d,\\
				\pi^*(s,x), & otherwise. 
			\end{cases}
		\end{equation}

		\item[(b)]{\textit{(Consistency condition on population aggregation)}} $m^*_t=\operatorname{law}(X^{*}_t)$ for all $t\in[0,T]$, where
		\begin{equation}\label{eq:def.equim}
			dX^{*}_t=\tilde{b}(t,X^{*}_t,m^*_t,\pi^*(t,X^{*}_t))dt+\sigma(t,X^{*}_t,m^*_t)dW_t,\quad X^*_0\sim\nu.
		\end{equation}
	\end{itemize}
\end{Definition}

{\begin{Remark}\label{rmk:comparison_literature}
We summarize some key differences between our study and \cite{MeiZhu2022}. Unlike \cite{MeiZhu2022}, which considers equilibrium strategies as classical feedback controls taking values in the action space $U$, we adopt the broader definition of relaxed feedback policies taking values in $\mathscr{P}(U)$. This choice aligns with the compactification approach widely used in the time-consistent mean field game literature and allows us to establish existence results under considerably weaker assumptions.

In \cite{MeiZhu2022}, the existence theory relies on two restrictive conditions. First, the drift and running cost functions must be separable in the form
\begin{equation*}
b(t,x,m,a) = b_1(t,x,m) + b_2(t,x,a),\qquad r(t,x,m,a) = r_1(t,x,m) + r_2(t,x,a).
\end{equation*}
Second, a globally Lipschitz continuous selection map $\psi$ for the Hamiltonian is required, given by
\begin{equation*}
\psi(t,x,q) := \operatorname{argmin}_{a \in U} \big\{ q \cdot b_2(t,x,a) + r_2(0,x,a) \big\}.
\end{equation*}
The existence of such a map generally requires strong structural conditions, typically involving strict convexity of the coefficients. Without these convexity assumptions, a classical equilibrium may fail to exist even in single-agent time-inconsistent control problems \cite{huang2021strong,bayraktar2025relaxed}.

To overcome these limitations, we employ an entropy regularization approach, which accommodates general conditions on the drift and reward functions and helps ensure the existence of equilibria under significantly weaker regularity conditions (see Theorem~\ref{thm:equi.existence}).
\end{Remark}}

\section{Existence of Equilibrium under Entropy Regularization}\label{sect:regexist}
This section studies equilibrium existence through the vanishing entropy regularization approach. We first analyze the mean field game under Shannon entropy regularization on relaxed controls. Specifically, for a population flow $m\in\mathscr{M}_{\nu}$ and an admissible relaxed policy $\pi:[0,T]\times\R^d\rightarrow\mathscr{P}_{c}(U)$, the regularized payoff functional is given by
\begin{align*}
	V^{\pi, m}_\lambda (t,s,x) &:= \mathbb{E}_{s,x}\left[\int_s^T  \left(\tilde{r}(l-t, X^{\pi}_l, m_l, \pi(l, X^{\pi}_l))+\delta(l-t)\lambda\mathcal{H}(\pi(l, X^{\pi}_l))\right) dl+ F(t,X^{\pi}_T,m_T)\right],\\
	J^{\pi, m}_\lambda(t,x) &:= V^{\pi, m}_\lambda(t,t,x),
\end{align*}
where $\delta:[0,T]\rightarrow\R$ is a  strictly positive discount function satisfying $\delta(0)=1$, $\lambda>0$ is the regularization parameter, and $\mathcal{H}$ denotes the Shannon entropy $\mathcal{H}(\varpi):= -\int_U \ln(\varpi(a))\varpi(a)da$ for a given density $\varpi\in \mathscr{P}_c(U)$.
{\begin{Definition}
	A relaxed feedback policy $\pi: [0,T] \times \R^d \rightarrow \mathscr{P}_c(U)$ is said to be admissible with respect to the flow $m$ for the regularized problem if for any $(t,x)\in[0,T)\times\R^d$, the controlled SDE
	\begin{equation*}
		dX^\pi_s = \tilde{b}(s, X^\pi_s, m_s, \pi(s, X^\pi_s)) ds + \sigma(s, X^\pi_s, m_s) dW_s\; \text{ for } s \in [t,T],\quad X^\pi_t = x,
	\end{equation*}
	admits a unique strong solution, and $J^{\pi,m}_\lambda(t,x)$ is finite. The set of all such admissible policies is denoted by $\Ac_{c}$.
\end{Definition}}

As in Definition \ref{def:equi.relaxed}, we give the following definition of a regularized mean-field equilibrium.

\begin{Definition}\label{def:equi.reg}
	A pair $(\pi^*_\lambda, m^*_\lambda)$, where $\pi^*_\lambda\in\Ac_{c}$ and $m^*_\lambda\in\mathscr{M}_{\nu}$, is called a regularized mean-field equilibrium with entropy weight $\lambda>0$ if the following conditions are satisfied:
	\begin{itemize}
		\item[(a)]{\textit{(Equilibrium response in the intra-personal game)}}        For any $(t,x)\in[0,T)\times \R^d$ and $\pi'\in\mathcal{D}_{c}$,
		\begin{equation}\label{eq:def.equipi.reg}
			\limsup_{\epsilon\to 0^+} \frac{J^{\pi'\otimes_{t,\epsilon} \pi^*_\lambda, m^*_\lambda}_\lambda(t,x)- J^{\pi^*_\lambda, m^*_\lambda}_\lambda(t,x)}{\epsilon}\leq 0,
		\end{equation}
		where $\mathcal{D}_{c}$ denotes the set of all alternative policies $\pi': [0,T] \times \mathbb{R}^d \to \mathscr{P}_c(U)$ such that the perturbed policy $\pi'\otimes_{t,\epsilon} \pi^*_\lambda$ remains in $\mathcal{A}_{c}$ for sufficiently small $\epsilon > 0$.

		\item[(b)]{\textit{(Consistency condition on population aggregation)}} $m^*_{\lambda, t}=\operatorname{law}(X^{*,\lambda}_t)$ for all $t\in[0,T]$, where
		\begin{equation}\label{eq:def.equim.reg}
			dX^{*,\lambda}_t=\tilde{b}(t,X^{*,\lambda}_t,m^*_{\lambda, t},\pi^*_\lambda(t,X^{*,\lambda}_t))dt+\sigma(t,X^{*,\lambda}_t,m^*_{\lambda, t})dW_t,\quad X^{*,\lambda}_0\sim\nu.
		\end{equation}
	\end{itemize}
\end{Definition}
	\begin{Assumption}\label{assume.lipsa.U}
	The functions $a\rightarrow b(t,x,m,a), r(t,x,m,a)$ are uniformly Lipschitz, i.e.,
	$$
	\Theta:=\sup_{(t,x,m)\in [0,T]\times \R^d\times\Pc_2(\R^d)} \frac{|b(t,x,m,a)-b(t,x,m,a')|+|r(t,x, m,a)-r(t,x,m,a')|}{|a-a'|}<\infty.
	$$
	The action space $U$ satisfies a uniform inner-cone test condition: when $\ell>1$, there exist $\zeta>0$ and $\gamma\in (0,\pi/2]$ such that, for any $a\in U$, one can find a cone with vertex $a$ and angle $\gamma$ (i.e., $\cone(a,\gamma)$) satisfying $\cone(a,\gamma)\cap B_\zeta(a)\subset U$. When $\ell=1$, there exists $\zeta>0$ such that, for any $a\in U$, either $[a-\zeta, a]$ or $[a, a+\zeta]$ is contained in $U$.
\end{Assumption}
\begin{Theorem}\label{thm:existence_regu}
	Let Assumption \ref{assume.r} hold, and let $\delta\in \Cc^1_{\alpha}([0,T])$ be a strictly positive function satisfying $\delta(0)=1$.
	\begin{itemize}
		\item[(i)]  If $u \in \mathcal{C}^{1,2}_{ [0,T]}$ and $m\in\mathscr{M}_{\nu}$ satisfy
		\begin{align}
			\partial_s u (t,s, x)&+\frac{1}{2}\operatorname{tr}\left((\sigma\sigma^T)(s,x,m_s) D^2_x u (t,s, x)\right) +\tilde{b}(s,x,m_s, \pi^*(s,x))\cdot D_x u(t,s, x) \notag \\
			& +\tilde{r}(s-t,x, m_s,\pi^*(s,x))+\lambda \delta(s-t)\mathcal{H}(\pi^*(s,x)) =0 \quad (t,s,x)\in \Delta_{[0,T]}\times \R^d, \label{eq:cha.HJBentropy_prime}\\
			u(t,T,x)&=F(t,x,m_T),\nonumber\\
			m_t&=\operatorname{law}(X^{\pi^*,m}_t) \quad \forall t\in[0,T],\label{eq:cha.m}
		\end{align}
		where
		\begin{equation}\label{eq:cha.pi}
			\begin{aligned}
	&\pi^*(t,x,a)=\\
&\frac{\exp\left\{\frac{1}{\lambda}\left[b(t,x,m_t,a)\cdot D_{x}u(t,t,x) +r(0,x,m_t,a)\right]\right\}}{\int_{U}\exp\left\{\frac{1}{\lambda}\left[b(t,x,m_t,a')\cdot D_{x}u(t,t,x) +r(0,x,m_t,a')\right]\right\}da'},\quad\forall(t,x,a)\in[0,T]\times\R^d\times U
			\end{aligned}
		\end{equation}
		and
		\begin{equation*}
			dX^{\pi^*,m}_t=\tilde{b}(t,X^{\pi^*,m}_t,m_t, \pi^*(t,X^{\pi^*,m}_t))dt+\sigma(t,X^{\pi^*,m}_t,m_t)dW_t,\quad X^{\pi^*,m}_0\sim\nu.
		\end{equation*}
        Furthermore, suppose that $D_x u(t,s,x)$ is Lipschitz continuous in $t$ uniformly in the other arguments, i.e., there exists a constant $C>0$ such that
        \begin{equation}\label{eq1:lm.lipschitz_t}
            |D_x u(t_1,s,x) - D_x u(t_2,s,x)| \leq C|t_1 - t_2|,
        \end{equation}
        for all $t_1, t_2 \in [0,T]$ and $(s,x) \in[0,T]\times\R^d$ with $s \geq t_1 \vee t_2$.
		Then $u(t,s,x)=V^{\pi^*, m}_\lambda(t,s,x)$ and $(\pi^*, m)$ is a regularized equilibrium with entropy weight $\lambda$.

		\item[(ii)] Additionally, let Assumption \ref{assume.lipsa.U} hold and let  $\nu\in\mathscr{P}_{2+\kappa}(\R^d)$. There exists a finite constant $\lambda_0>0$ such that for all entropy parameters $0<\lambda\leq \lambda_0$, a regularized equilibrium $(\pi^*_\lambda, m^*_\lambda)$ with entropy parameter $\lambda$ exists. Moreover, the following estimates also hold:
		\begin{equation}\label{eq:thm.entropy}
			\begin{aligned}
				\|V^{\pi^*_\lambda, m^*_\lambda}_\lambda\|_{\widetilde{\Cc}^{0, 1}_{\alpha,[0, T]}} \vee \|V^{\pi^*_\lambda, m^*_\lambda}_\lambda \|_{\widehat{W}^{1,2}_{p,[0, T]}}  \leq A^*,\quad\|V^{\pi^*_\lambda,m^*_\lambda}_\lambda\|_{\widetilde{\Cc}^{1, 2}_{\alpha,[0, T]}}\leq A_\lambda,
			\end{aligned}
		\end{equation}
		where $A^*$ is a constant depending on the constants $K_1,  K_2, \eta$, $\Theta, \zeta, \gamma$ in Assumptions \ref{assume.r} and \ref{assume.lipsa.U}, but independent of $\lambda$, and $A_\lambda$ is another constant that further depends on $\lambda$.
	\end{itemize}
\end{Theorem}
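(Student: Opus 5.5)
For part (i) the plan is a verification argument. First, Feynman--Kac: for fixed $t$, apply It\^o's formula to $s\mapsto u(t,s,X^{\pi^*}_s)$ on $[s_0,T]$, using $u\in\Cc^{1,2}_{[0,T]}$ and \eqref{eq:cha.HJBentropy_prime}. Boundedness of $D_xu$ and $\sigma$ makes the stochastic integral a martingale, so taking expectations gives $u=V^{\pi^*,m}_\lambda$. Condition (b) of Definition \ref{def:equi.reg} is exactly \eqref{eq:cha.m}.

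For condition (a), fix $(t,x)$ and $\pi'\in\mathcal D_c$, and write $\pi^\epsilon=\pi'\otimes_{t,\epsilon}\pi^*$. By the Markov property at time $t+\epsilon$ and the Feynman--Kac identity for $u(t,\cdot,\cdot)$,
\[
J^{\pi^\epsilon,m}_\lambda(t,x)=\E_{t,x}\Big[\int_t^{t+\epsilon}\big(\tilde r(l-t,X^{\pi'}_l,m_l,\pi')+\lambda\delta(l-t)\mathcal H(\pi')\big)dl+u(t,t+\epsilon,X^{\pi'}_{t+\epsilon})\Big].
\]
Applying It\^o to $u(t,\cdot,X^{\pi'}_\cdot)$ on $[t,t+\epsilon]$ and subtracting $u(t,t,x)$ via the PDE, the difference quotient becomes
\[
\frac1\epsilon\E\int_t^{t+\epsilon}\big[G_{l,t}(\pi'(l,X_l))-G_{l,t}(\pi^*(l,X_l))\big]dl,
\]
where $G_{l,t}(\varpi)=\tilde b(l,X_l,m_l,\varpi)\cdot D_xu(t,l,X_l)+\tilde r(l-t,X_l,m_l,\varpi)+\lambda\delta(l-t)\mathcal H(\varpi)$. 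The Gibbs form \eqref{eq:cha.pi} makes $\pi^*(s,y)$ the maximizer of $\varpi\mapsto\tilde b\cdot D_xu(s,s,y)+\tilde r(0,\cdot)+\lambda\mathcal H(\varpi)$. So I will replace $(t,l)$ in $G$ by $(l,l)$, which produces an error. Here \eqref{eq1:lm.lipschitz_t} gives $|D_xu(t,l,y)-D_xu(l,l,y)|\le C\epsilon$; the time regularity of $r$ and $\delta$ gives the other differences as $O(\epsilon)$; and $\mathcal H$ is bounded above by $\ln\Leb(U)$. Hence the error integrated over $[t,t+\epsilon]$ is $o(\epsilon)$, and the limsup is $\le0$.

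The main technical care in (i) is that $\mathcal H(\pi')$ could be $-\infty$. That case only makes the inequality easier, but it needs integrability via admissibility. Also, $\pi^*$ need not be continuous in time, so I will compare pointwise inside the integral rather than take limits of $\pi^*$.

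For part (ii) I will build the map $\Phi_\lambda(w,\mu)=(V^{\pi,\mu}_\lambda,\operatorname{law}(X^{\pi,\mu}))$ with $\pi=\Gamma_\lambda(\cdot,D_xw(\cdot,\cdot,\cdot),\mu,\cdot)$, and apply Schauder on a convex compact set $\Ec_\lambda$. The steps are as follows.

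\begin{enumerate}
\item Use the entropy sublinearity (Lemma \ref{lm:entropy.ln}) and the cone condition of Assumption \ref{assume.lipsa.U} to bound $|\ln\pi|$, hence $\mathcal H(\pi)$, by $C(1+\ln(1+|D_xw|/\lambda))$. Then Lemma \ref{lm:pi.est} gives H\"older bounds on $\pi$.
\item Apply Schauder and $W^{1,2}_p$ estimates to the linear PDE \eqref{eq:prop.iteraPDE}, treating $t$ as a parameter, and differentiate in $t$ to control $\partial_t$ (Lemma \ref{lm:iteration.estimateentropy}). The $\Cc^{0,1}_\alpha$ and $W^{1,2}_p$ bounds come from $L^p$ theory with bounded drift and logarithmically growing source, which yields an a priori bound $A^*$ for small $\lambda\le\lambda_0$ through an interpolation/absorption argument. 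The $\lambda$-dependent $\Cc^{1,2}_\alpha$ bound $A_\lambda$ follows afterwards.
\item For the measure component, bounded drift and $\nu\in\Pc_{2+\kappa}$ give $\mu^{\pi,m}\in\Mc^{\kappa,C}_\nu$ with $C$ independent of the input.
\item Take $\Ec_\lambda$ to be the product of the set of $w$ with $\widetilde\Cc^{1,2}_\alpha$ norm $\le A_\lambda$, compact in the weighted $\wg$-type $t$-norm, and $\Mc^{\kappa,C}_\nu$.
\item Prove continuity of $\Phi_\lambda$ (Lemma \ref{lm:Phi.continuous}) via stability of PDE solutions and SDEs under local uniform convergence of coefficients.
\end{enumerate}

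Schauder then gives a fixed point $(u,m)$. The $t$-derivative bound supplies \eqref{eq1:lm.lipschitz_t}, so part (i) concludes. I expect the main obstacle to be closing the $\lambda$-independent a priori bound $A^*$ against the logarithmic growth of the entropy term in $D_xw$, and making $\Ec_\lambda$ invariant.
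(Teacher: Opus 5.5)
Your outline matches the paper's proof. For (i) it is It\^o verification plus Gibbs maximality with a $(t,l)\to(l,l)$ comparison. For (ii) it is Schauder's theorem for $\Phi_\lambda$ on a compact convex set that combines a $\lambda$-uniform first-order bound with a $\lambda$-dependent second-order bound.

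The step that fails as written is item 4: a set cut out only by $\|w\|_{\widetilde{\Cc}^{1,2}_{\alpha,[0,T]}}\le A_\lambda$ is not mapped into itself. The second-order bound on the output comes from Schauder estimates controlled by the H\"older norms of $\tilde b$, $\tilde r$ and $\lambda\mathcal H(\pi)$. By Lemma \ref{lm:pi.est} these grow like $\frac{M}{\lambda}e^{A_0M/\lambda}$ in the first-order size $M$ of the input. Knowing only $M\le A_\lambda$ therefore gives an output bound of order $e^{A_0A_\lambda/\lambda}$, and the estimate does not close.

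The set has to carry both constraints, as $\mathcal M_\lambda$ in \eqref{compact-set} does: first-order norm at most $A^*$ and second-order norm at most $A_\lambda$, with $A_\lambda$ computed at $M=A^*$. In your three-variable formulation this reads $\|w\|_{\widetilde{\Cc}^{0,1}_{\alpha,[0,T]}}\le A^*$ together with $\|w\|_{\widetilde{\Cc}^{1,2}_{\alpha,[0,T]}}\le A_\lambda$. Preserving the first constraint needs no interpolation or absorption. By \eqref{est:2} the output's first-order norm is at most $A_0(1+\ln(1+\|w\|_{\Cc^{0,1}_\alpha}))$, with $A_0$ independent of $\lambda\le\lambda_0$ and of all higher norms of $w$, so any $A^*$ with $A_0(1+\ln(1+A^*))\le A^*$ works.

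This hinges on a logarithmic bound for the entropy itself, which is what the cone condition gives in item 1 (Lemma \ref{lm:entropy.ln}). It does not give a pointwise bound on $|\ln\pi|$, which is genuinely of order $|D_xw|/\lambda$. Going through $|\ln\pi|$ would only give $\lambda|\mathcal H(\pi)|\lesssim 1+|D_xw|$, and then no choice of $A^*$ closes.

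The remaining differences from the paper are legitimate variants.

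In (i) you compare $G_{l,t}(\pi')$ with $G_{l,l}(\pi')$ pointwise. The resulting term $\lambda(\delta(l-t)-1)\mathcal H(\pi')$ is not controlled by $\mathcal H\le\ln\Leb(U)$: for a decreasing discount it is positive and unbounded when $\mathcal H(\pi')\ll 0$. It is controlled by $|\delta(l-t)-1|\le\|\delta'\|_\infty\epsilon$ together with $\E\int_t^{t+\epsilon}|\mathcal H(\pi'(l,X_l))|\,dl\to 0$. That limit follows from finiteness of $J_\lambda$ for $\pi'\otimes_{t,\epsilon_0}\pi^*$ (part of $\pi'\in\mathcal D_c$) and dominated convergence, so your argument goes through with the integrability you mention. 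The paper instead first bounds $G_{l,t}(\pi')$ by $H_\lambda(t,l,X_l)$, the supremum at temperature $\lambda\delta(l-t)$. It then compares $H_\lambda(t,l,\cdot)$ with $H_\lambda(l,l,\cdot)$ and $L_\lambda(t,l,\cdot)$ with $L_\lambda(l,l,\cdot)$. Only Gibbs measures with bounded entropy appear, no integrability of $\mathcal H(\pi')$ is needed, and the rate $O(\epsilon^{\alpha/2})$ is uniform in $\pi'$.

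In (ii) you appear to iterate on the full $w(t,s,x)$. The paper iterates only on the diagonal $J(t,x)=V(t,t,x)$, which is all the Gibbs policy sees. This keeps compactness in the ordinary space $\Cc^{0,2,\wg}_\beta([0,T]\times\R^d)$ rather than in a $t$-parametrised family.

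Finally, closing $A^*$ is not the hard part. The real work is the continuity of $\Phi_\lambda$ (Lemmas \ref{lm:Phi.continuous0}--\ref{lm:Phi.continuous}):
\begin{itemize}
\item local uniform convergence of $V^n$, uniformly in the parameter $t$, via truncation at exit times;
\item stability of those exit times under uniform ellipticity (Lemma \ref{lm:hitting.uniform});
\item interior Schauder estimates for $V^\infty-V^n$ and its $t$-derivative;
\item a Gronwall bound for the measure flows.
\end{itemize}
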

\subsection{Proof of Theorem \ref{thm:existence_regu}}

\begin{proof}[{\bf Proof of Theorem \ref{thm:existence_regu} Part (i):}]
Suppose that $(u, m,\pi^*)$ with $u \in \mathcal{C}^{1,2}_{[0,T]}$ satisfies the system \eqref{eq:cha.HJBentropy_prime}-\eqref{eq:cha.pi}. We first show that $V^{\pi^*,m}_\lambda(t,s,x)=u(t,s,x)$ for any $(t,s,x) \in \Delta_{[0,T]} \times \mathbb{R}^d$.

Since $u \in \mathcal{C}^{1,2}_{[0,T]}$, for any fixed $t \in [0,T]$, the function $(s,x) \mapsto u(t,s,x)$ belongs to $\mathcal{C}^{1,2}([t,T] \times \mathbb{R}^d)$. {{Combining this with \eqref{eq:cha.pi}, $m\in \mathscr{M}_\nu$, and the conditions on $b$ and $\sigma$ in Assumption \ref{assume.r}}, we deduce that, for any initial point $(s,x)\in[t,T]\times\R^d$, the SDE
\begin{equation*}
	dX^{\pi^*}_l= \tilde{b}(l,X^{\pi^*}_l,m_l,\pi^*(l,X^{\pi^*}_l))dl+\sigma(l,X^{\pi^*}_l,m_l)dW_l,\quad X^{\pi^*}_s=x, \quad l \in [s,T],
\end{equation*}
admits a unique strong solution (see, e.g., \cite{veretennikov1981strong}).} Take $(t,s)\in \Delta_{[0,T]}$. Applying It\^o's formula to $u(t,l, X^{\pi^*}_l)$ yields
\begin{align*}
	&du(t,l, X^{\pi^*}_l)\\
	&= \left(\partial_l u(t, l, X^{\pi^*}_l)+ \tilde{b}(l,X^{\pi^*}_l,m_l,\pi^*(l,X^{\pi^*}_l))\cdot D_x u(t,l,X^{\pi^*}_l)+\frac{1}{2}\operatorname{tr}\bigg((\sigma\sigma^T)(l,X^{\pi^*}_l,m_l) D^2_x u(t,l,X^{\pi^*}_l) \bigg)\right) dl\\
	&\qquad+ \sigma(l,X^{\pi^*}_l,m_l)D_xu(t,l,X^{\pi^*}_l)dW_l.
\end{align*}
Integrating from $s$ to $T$, taking expectations on both sides, and using the fact that $u$ satisfies the PDE \eqref{eq:cha.HJBentropy_prime} along with the terminal condition $u(t,T,x)=F(t,x,m_T)$, we obtain
\begin{equation*}
    u(t,s,x)=\mathbb{E}_{s,x}\left[\int_s^T  \left(\tilde{r}(l-t, X^{\pi^*}_l, m_l, \pi^*(l, X^{\pi^*}_l))+\delta(l-t)\lambda\mathcal{H}(\pi^*(l, X^{\pi^*}_l))\right) dl+ F(t,X^{\pi^*}_T,m_T)\right],
\end{equation*}
which is exactly $V^{\pi^*,m}_\lambda(t,s,x)$.

To show that $(\pi^*, m)$ is a regularized equilibrium, it suffices to verify that \eqref{eq:def.equipi.reg} holds. We begin by defining the following functions for any $(t,s,x) \in \Delta_{[0,T]} \times \mathbb{R}^d$:
\begin{align*}
    H_{\lambda}(t,s,x) &:= \sup_{\varpi \in \mathscr{P}_c(U)} \bigg\{ \int_U \bigg( b(s,x,m_s, a) \cdot D_x V^{\pi^*,m}_\lambda(t,s,x) \\
    &\qquad\qquad \qquad \qquad + r(s-t,x,m_s,a) -\lambda\delta(s-t)\log(\varpi(a))\bigg) \varpi(a)da \bigg\}, \\
    L_{\lambda}(t,s,x) &:= \tilde{b}(s,x, m_s,\pi^*(s,x)) \cdot D_x V^{\pi^*,m}_\lambda(t,s,x) + \tilde{r}(s-t,x,m_s,\pi^*(s,x))+\lambda\delta(s-t)\Hc(\pi^*(s,x)).
\end{align*}
We first observe that there exists a finite constant $C > 0$, independent of $t$ and $s$, such that
\begin{equation}\label{eq:origin'.thm1}
    \sup_{x\in \mathbb{R}^d} |H_{\lambda}(t,s,x) - H_{\lambda}(s,s,x)| \leq C|t-s|^{\frac{\alpha}{2}}, \quad \forall |t-s| \leq 1.
\end{equation}
Indeed, for any fixed $(t,s,x)\in\Delta_{[0,T]}\times \mathbb{R}^d$, the supremum in the definition of $H_\lambda(t,s,x)$ is uniquely attained by the Gibbs measure $\tilde{\varpi}$ with the density:
\begin{equation*}
    \tilde{\varpi}(t,s,x,a) = \frac{\exp\left\{ \frac{1}{\lambda\delta(s-t)} \left[ b(s,x,m_s,a) \cdot D_x V^{\pi^*,m}_\lambda(t,s,x) + r(s-t,x,m_s,a) \right] \right\}}{\int_U \exp\left\{ \frac{1}{\lambda\delta(s-t)} \left[ b(s,x,m_s,a') \cdot D_x V^{\pi^*,m}_\lambda(t,s,x) + r(s-t,x,m_s,a') \right] \right\} da'}.
\end{equation*}
Because $V^{\pi^*,m}_\lambda = u \in \Cc^{1,2}_{[0,T]}$, Assumption \ref{assume.r}, and the strict positivity of $\delta$ on $[0,T]$ hold, the Shannon entropy $\mathcal{H}(\tilde{\varpi}(t,s,x))$ is bounded by a finite constant independent of $(t,s,x)$.
Then,
\begin{align*}
    H_{\lambda}(t,s,x) &= \tilde{b}(s,x, m_s,\tilde{\varpi}(t,s,x)) \cdot  D_x V^{\pi^*,m}_\lambda(t,s,x) + \tilde{r}(s-t,x,m_s,\tilde{\varpi}(t,s,x)) + \lambda\delta(s-t)\mathcal{H}(\tilde{\varpi}(t,s,x)) \\
    &\leq \tilde{b}(s,x, m_s,\tilde{\varpi}(t,s,x)) \cdot  D_x V^{\pi^*,m}_\lambda(s,s,x) + \tilde{r}(0,x,m_s,\tilde{\varpi}(t,s,x)) + \lambda\mathcal{H}(\tilde{\varpi}(t,s,x)) + C|t-s|^{\frac{\alpha}{2}} \\
    &\leq H_{\lambda}(s,s,x) + C|t-s|^{\frac{\alpha}{2}},
\end{align*}
where the first inequality follows from \eqref{eq1:lm.lipschitz_t}, 
the H\"older continuity of $r(\cdot,x,m,a)$ with respect to its first argument, and the regularity of $\delta$. By applying a symmetric argument to $H_{\lambda}(s,s,x) - H_{\lambda}(t,s,x)$, we obtain \eqref{eq:origin'.thm1}. Similarly, we have that
\begin{equation}\label{eq:origin'.thm3}
    |L_{\lambda}(t,s,x) - L_{\lambda}(s,s,x)| \leq C|t-s|^{\alpha/2}, \quad \forall |t-s| \leq 1.
\end{equation}
Combining \eqref{eq:origin'.thm1} and \eqref{eq:origin'.thm3} yields, for any $t \in [0,T)$, that
\begin{align}
    L_{\lambda}(t,s,x) - H_{\lambda}(t,s,x) &= (L_{\lambda}(t,s,x) - L_{\lambda}(s,s,x)) + (L_{\lambda}(s,s,x) - H_{\lambda}(s,s,x)) + (H_{\lambda}(s,s,x) - H_{\lambda}(t,s,x)) \notag\\
    &\geq L_{\lambda}(s,s,x) - H_{\lambda}(s,s,x) - 2C|t-s|^{\alpha/2} \label{eq:origin'.thm5}\\
    &\geq -2C|t-s|^{\alpha/2}, \quad\forall (s,x) \in [t,T] \times \mathbb{R}^d, \notag
\end{align}
where the last inequality follows from the fact that the Gibbs measure $\pi^*(s,x)$ defined in \eqref{eq:cha.pi} exactly achieves the maximum of $H_{\lambda}(s,s,x)$.

Now take an arbitrary $\pi' \in \mathcal{D}_{c}$ and $(t,x) \in [0,T) \times \mathbb{R}^d$. For any $\epsilon > 0$, it holds that
\begin{align*}
J^{\pi'\otimes_{t,\epsilon} \pi^*, m}_\lambda(t,x) &= \mathbb{E}_{t,x}\left[\int_t^{t+\epsilon} \left(\tilde{r}(l-t, X^{\pi'}_l, m_l, \pi'(l, X^{\pi'}_l)) + \delta(l-t)\lambda\mathcal{H}(\pi'(l, X^{\pi'}_l))\right) dl \right. \\
&\qquad \left. + V^{\pi^*,m}_\lambda\left(t, t+\epsilon, X^{\pi'}_{t+\epsilon} \right)\right].
\end{align*}
Subtracting $J^{\pi^*, m}_\lambda(t,x) = V^{\pi^*,m}_\lambda(t,t,x)$ and applying It\^o's formula to $V^{\pi^*,m}_\lambda$ on the interval $[t, t+\epsilon]$, we obtain
\begin{align*}
	&J^{\pi'\otimes_{t,\epsilon} \pi^*, m}_\lambda(t,x) - J^{\pi^*, m}_\lambda(t,x) \\
    &= \mathbb{E}_{t,x}\bigg[\int_t^{t+\epsilon} \bigg( \tilde{r}(l-t, X^{\pi'}_l, m_l, \pi'(l, X^{\pi'}_l)) + \lambda\delta(l-t)\mathcal{H}(\pi'(l, X^{\pi'}_l)) \\
    &\qquad\qquad\qquad\quad + \partial_s V^{\pi^*,m}_\lambda\left(t, l, X^{\pi'}_{l} \right) + \tilde{b}(l,X^{\pi'}_{l},m_l, \pi'(l,X^{\pi'}_{l})) \cdot D_x V^{\pi^*,m}_\lambda(t,l,X^{\pi'}_{l}) \\
    &\qquad\qquad\qquad\quad + \frac{1}{2}\operatorname{tr}\left( \sigma\sigma^T(l,X^{\pi'}_{l},m_l) D_x^2 V^{\pi^*,m}_\lambda(t,l,X^{\pi'}_{l}) \right) \bigg) dl\bigg].
\end{align*}
By \eqref{eq:cha.HJBentropy_prime}, the integrand can be dominated by the difference $H_{\lambda} - L_{\lambda}$ as follows:
\begin{align*}
    &J^{\pi'\otimes_{t,\epsilon} \pi^*, m}_\lambda(t,x) - J^{\pi^*, m}_\lambda(t,x) \\
    &= \mathbb{E}_{t,x}\bigg[\int_t^{t+\epsilon} \bigg( \Big[ \tilde{r}(l-t, X^{\pi'}_l, m_l, \pi'(l, X^{\pi'}_l)) + \lambda\delta(l-t)\mathcal{H}(\pi'(l, X^{\pi'}_l)) \\
    &\qquad\qquad\qquad\quad + \tilde{b}(l,X^{\pi'}_{l},m_l, \pi'(l,X^{\pi'}_{l})) \cdot D_x V^{\pi^*,m}_\lambda(t,l,X^{\pi'}_{l}) \Big] \\
    &\qquad\qquad\qquad \quad - \Big[ \tilde{r}(l-t, X^{\pi'}_l, m_l, \pi^*(l, X^{\pi'}_l)) + \lambda\delta(l-t)\mathcal{H}(\pi^*(l, X^{\pi'}_l)) \\
    &\qquad\qquad\qquad\quad + \tilde{b}(l,X^{\pi'}_{l},m_l, \pi^*(l,X^{\pi'}_{l})) \cdot D_x V^{\pi^*,m}_\lambda(t,l,X^{\pi'}_{l}) \Big] \bigg) dl\bigg] \\
    &\leq \mathbb{E}_{t,x}\bigg[\int_t^{t+\epsilon} \bigg( H_{\lambda}(t,l,X^{\pi'}_{l}) - L_{\lambda}(t,l,X^{\pi'}_{l}) \bigg) dl\bigg]\\
    &\leq \mathbb{E}_{t,x}\bigg[\int_t^{t+\epsilon} 2C|l-t|^{\alpha/2} dl\bigg] = \frac{2C}{1+\alpha/2}\epsilon^{1+\alpha/2},
\end{align*}
where the last inequality follows from \eqref{eq:origin'.thm5}.
Dividing both sides by $\epsilon$ and letting $\epsilon \to 0^+$, the right-hand side vanishes.
Consequently, by the arbitrariness of $(t,x)$ and $\pi' \in \mathcal{D}_c$, the equilibrium condition \eqref{eq:def.equipi.reg} is satisfied for $\pi^*$, which completes the proof of Theorem \ref{thm:existence_regu} Part (i).
\end{proof}

For $\lambda>0$, define $\Gamma_{\lambda}: [0,T]\times \R^d\times \R^d\times \mathscr{P}_2(\R^d)\times U \rightarrow \R_+$ by
\begin{equation*}
	\Gamma_{\lambda}(t, x, p, m, a) := \frac{\exp\left( \frac{1}{\lambda}\left[b(t,x,m,a)\cdot p +r(0,x,m, a) \right]\right)}{\int_U \exp\left( \frac{1}{\lambda}\left[b(t,x,m,a')\cdot p+r(0,x,m,a') \right]\right) da'}.
\end{equation*}
We define $\mu^{\pi,m}$ as the flow induced when all agents follow $\pi$ starting from $\nu$, i.e., $\mu^{\pi,m}_t=\operatorname{law}(X^{\pi,m}_t)$ for all $t\in [0,T]$, where
\begin{equation}\label{mupim}
	dX^{\pi,m}_{t}=\tilde{b}(t,X^{\pi,m}_{t},m_t,\pi(t,X^{\pi,m}_{t}))dt+\sigma(t,X^{\pi,m}_{t},m_t)dW_t,\quad X^{\pi,m}_{0}\sim\nu.
\end{equation}
We then define the operator
\begin{equation}\label{eq:Phi.fixedpoint}
	\Phi_\lambda(w,m):= (v^{\pi, m}, \mu^{\pi,m}),
\end{equation}
where $\pi(t,x,a)=\Gamma_\lambda(t,x,D_xw(t,x),m_t,a)$ and
\begin{equation*}
	v^{\pi,m}(t,x)=V^{\pi,m}_{\lambda}(t,t,x).
\end{equation*}
We prove the existence of an equilibrium by showing that a fixed point exists for the operator $\Phi_\lambda$.

The following lemma is adapted from \cite[Lemma 1]{bayraktar2025relaxed}, 
 and provides a sublinear growth estimate for the entropy term of the Gibbs-form operator $\Gamma_{\lambda}(t, x, y, m, a)$ in terms of $|y|$.
	\begin{Lemma}\label{lm:entropy.ln}
		Suppose that Assumption \ref{assume.lipsa.U} holds.
		There exists a finite constant $\lambda_0>0$ such that the following estimate holds for all $0<\lambda\leq \lambda_0$,
		\bee
		|\Hc(\Gamma_\lambda(t, x,y,m, a))|=\left| \int_U \ln(\Gamma_{\lambda}(t,x,y,m,a))\Gamma_{\lambda}(t,x,y,m,a)da\right|\leq K_3+K_4|\ln\lambda|+K_5 \ln(1+|y|).
		\eee
		where $K_3, K_4, K_5$ are positive constants that depend on $\ell$, $\Leb(U)$ and the constants $\Theta$, $\zeta$ and $\gamma$ in Assumption \ref{assume.lipsa.U}, but are independent of $\lambda$.
	\end{Lemma}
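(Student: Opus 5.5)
Write $g(a):=\frac{1}{\lambda}\left[b(t,x,m,a)\cdot y+r(0,x,m,a)\right]$, so that $\Gamma_\lambda = e^{g}/Z$ with $Z=\int_U e^{g(a')}da'$, and set $M:=\sup_U g$. By Assumption \ref{assume.lipsa.U}, $a\mapsto g(a)$ is Lipschitz with constant $L:=\Theta(1+|y|)/\lambda$, uniformly in $(t,x,m)$. The entropy is
\[
\int_U \Gamma_\lambda\ln\Gamma_\lambda\,da=\int_U g\,\Gamma_\lambda\,da-\ln Z .
\]
The plan is to sandwich this quantity between two explicit bounds. Using $g\le M$ and $\int_U\Gamma_\lambda\,da=1$, we get $\int_U\Gamma_\lambda\ln\Gamma_\lambda\,da\le M-\ln Z$. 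In the other direction, Jensen's inequality (nonnegativity of the relative entropy with respect to the uniform law on $U$) gives $\int_U\Gamma_\lambda\ln\Gamma_\lambda\,da\ge -\ln\Leb(U)$. So it remains to bound $M-\ln Z$ from above, which amounts to a lower bound on $Z$.

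For the lower bound on $Z$, choose $a^*\in\overline U$ with $g(a^*)\ge M-1$; by continuity we may take $a^*\in U$ up to an arbitrarily small error. For $\ell>1$, the inner cone condition supplies $\cone(a^*,\gamma)\cap B_\zeta(a^*)\subset U$. On the truncated cone $\cone(a^*,\gamma)\cap B_\rho(a^*)$ with $\rho:=\min\{\zeta,1/L\}$, the Lipschitz bound gives $g\ge M-2$. The volume of this truncated cone is at least $c_{\ell,\gamma}\rho^\ell$. Hence
\[
Z\ge e^{M-2}c_{\ell,\gamma}\min\{\zeta,1/L\}^\ell ,
\qquad\text{so}\qquad
M-\ln Z\le 2-\ln c_{\ell,\gamma}+\ell\max\{-\ln\zeta,\ \ln L\}.
\]
For $\ell=1$ the same argument applies using the one-sided interval $[a^*-\zeta,a^*]$ or $[a^*,a^*+\zeta]$.

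Next, expand $\ln L=\ln\Theta+\ln(1+|y|)+|\ln\lambda|$; this identity holds when $\lambda\le 1$, and the choice $\lambda_0=1$ suffices for it. Collecting terms yields
\[
\bigl|\Hc(\Gamma_\lambda)\bigr|\le K_3+K_4|\ln\lambda|+K_5\ln(1+|y|),
\qquad K_4=K_5=\ell,
\]
with $K_3$ absorbing $|\ln\Leb(U)|$, $|\ln\zeta|$, $|\ln\Theta|$, $\ln c_{\ell,\gamma}$ and the additive constants. This gives the two-sided bound on $|\Hc|$.

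I expect the main obstacle to be the geometric step: the maximizer may sit on the boundary of $U$, so the lower bound on $Z$ must rely on the uniform cone condition rather than a full ball. The care needed there is to make the cone-volume constant depend only on $(\ell,\gamma)$, and to handle the cutoff $\min\{\zeta,1/L\}$ without letting hidden dependence on $\lambda$ leak into $K_3$.
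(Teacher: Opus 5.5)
Your argument is correct and is essentially the standard proof behind \cite[Lemma 1]{bayraktar2025relaxed}, which the paper cites instead of proving the lemma. The bound $\Hc(\Gamma_\lambda)\le\ln\Leb(U)$ is trivial, and the other side reduces to a lower bound on the partition function over the truncated inner cone of radius $\min\{\zeta,\lambda/(\Theta(1+|y|))\}$ at a near-maximizer. One cosmetic fix: use $\Theta\vee 1$ in place of $\Theta$ in the Lipschitz constant, so that $K_3$ stays finite when $\Theta=0$ and does not contain $|\ln\Theta|=\infty$.
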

For $w\in \Cc^{0,1}_{\alpha}([0,T]\times\R^d)$ with $\|w\|_{\Cc^{0,1}_{\alpha}([0,T]\times\R^d)}\leq M$ and $m\in\mathscr{M}_{\nu}^{\kappa,C}$ for some $C>0$, we derive H\"older norm estimates for key quantities induced by the policy
\begin{align}\label{eq:def.Gammalambda}
    \pi(t,x,a)=\Gamma_\lambda(t,x,D_xw(t,x),m_t,a).
\end{align}

\begin{Lemma}\label{lm:pi.est}
	Let Assumption \ref{assume.r} hold. Given $t\in[0,T)$, suppose $\|w\|_{\Cc^{0,1}_{\alpha}([t,T]\times\R^d)}\leq  M_t$ and
	\begin{equation*}
	    \sup_{0\leq s_1\neq s_2\leq T}\mathcal{W}_2(m_{s_1},m_{s_2})\leq \sqrt{C}|s_1-s_2|^{1/2}
	\end{equation*}
	for finite constants $M_t\geq 1, C\geq 1$, then the following estimates hold for the policy $\pi$ defined in \eqref{eq:def.Gammalambda}:
\begin{align*}
		&\|\lambda \mathcal{H}(\pi(\cdot,\cdot))\|_{\Cc_{\alpha} ([t,T]\times\R^d)} \leq A_0\sqrt{C}\left(M_t+\frac{M^2_t}{\lambda}\right)\exp\left(\frac{A_0M_t\sqrt{C}}{\lambda}\right),\quad  \|\sigma(\cdot,\cdot,m_\cdot)\|_{\Cc_\alpha([0,T]\times\R^d)}\leq A_0\sqrt{C},\\
		&\|\tilde{b}(\cdot,\cdot,m_\cdot,\pi(\cdot,\cdot))\|_{\Cc_\alpha([t,T]\times\R^d)} \vee \|\tilde{r}(\cdot-t,\cdot,m_\cdot,\pi(\cdot,\cdot))\|_{\Cc_\alpha([t,T]\times\R^d)}\vee \|\partial_t \tilde{r}(\cdot-t,\cdot,m_\cdot,\pi(\cdot,\cdot))\|_{\Cc_\alpha([t,T]\times\R^d)} \\
		&\quad \leq\frac{A_0M_t\sqrt{C}}{\lambda}\exp\left(\frac{A_0 M_t}{\lambda}\right),
	\end{align*}
	where $A_0$ is a constant depending on $d$, $\Leb(U)$ and the constants $K_1, K_2$ in Assumption \ref{assume.r}, and is independent of $\lambda\leq \lambda_0$, $M_t$ and $C$. The functions $b, r, \sigma$ above are understood as functions of $(s,x)$ for a fixed $t$.
\end{Lemma}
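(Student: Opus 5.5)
The plan is to treat everything as a composition of Hölder maps. The key object is the exponent
\[
\phi(s,x,a):=\tfrac{1}{\lambda}\bigl[b(s,x,m_s,a)\cdot D_xw(s,x)+r(0,x,m_s,a)\bigr].
\]
From Assumption \ref{assume.r}(i) we have $\|\phi\|_{\Cc^0}\le K_1(M_t+1)/\lambda$. For the Hölder seminorm of $\phi$ in $(s,x)$ on $[t,T]\times\R^d$, we split the difference at $(s_1,x_1),(s_2,x_2)$ into three pieces:
\begin{itemize}
\item the $(s,x)$-Hölder parts of $b$ and $r$ at fixed measure, from Assumption \ref{assume.r}(i);
\item the measure dependence, from Assumption \ref{assume.r}(iv) together with the hypothesis on $m$, which gives $K_2\mathcal W_2(m_{s_1},m_{s_2})\le K_2\sqrt C|s_1-s_2|^{1/2}\le K_2\sqrt C(|s_1-s_2|+|x_1-x_2|^2)^{\alpha/2}$ whenever the parabolic distance is at most $1$ (since $\alpha<1$);
\item the $D_xw$ factor, which is Hölder with constant $M_t$.
\end{itemize}
Combining these yields $[\phi]_{\Cc_\alpha}\le A\sqrt C M_t/\lambda$, where we use $M_t,C\ge1$ to absorb constants. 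The same bound holds for $\sigma(s,x,m_s)$, which gives the stated estimate $A_0\sqrt C$ directly.

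Next we pass to the Gibbs density $\pi=e^{\phi}/\int_Ue^{\phi}$. Since $|\phi|\le K_1(M_t+1)/\lambda$, we get $e^{-2\|\phi\|_{\infty}}/\Leb(U)\le\pi\le e^{2\|\phi\|_\infty}/\Leb(U)$. More usefully, the mean value theorem gives the pointwise bound
\[
|\pi(z_1,a)-\pi(z_2,a)|\le 2\,\sup_{a'}|\phi(z_1,a')-\phi(z_2,a')|\,e^{2\|\phi\|_\infty}/\Leb(U),
\]
where $z=(s,x)$. This comes from writing $\pi(z_1,a)=\pi(z_2,a)\,e^{\Delta\phi(a)}/\int\pi(z_2,a')e^{\Delta\phi(a')}da'$ and using $|e^u-1|\le|u|e^{|u|}$.

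With this in hand, the drift bound follows. For $\tilde b=\int_U b\,\pi\,da$, we split the increment as $\int(\Delta b)\pi_1+\int b_2(\pi_1-\pi_2)$. The first term is controlled by the first step (Hölder plus measure parts, of size $\le K\sqrt C$). The second is at most $K_1\Leb(U)\sup_a|\pi_1-\pi_2|\le \frac{A\sqrt C M_t}{\lambda}e^{AM_t/\lambda}$. The sup norm of $\tilde b$ is trivially bounded by $K_1$. The identical argument applies to $\tilde r(\cdot-t,\ldots)$ and to $\partial_t\tilde r$, using that $r(\cdot,x,m,a)\in\Cc^{1,0}_\alpha$ and that the shift by $t$ does not affect the seminorm.

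The entropy is handled through the identity $\lambda\Hc(\pi)=-\int_U\lambda\phi\,\pi\,da+\lambda\ln\int_Ue^{\phi}da$. The sup norm comes from Lemma \ref{lm:entropy.ln} (with $|y|\le M_t$), or directly from the pointwise bounds on $\pi$, since $\lambda|\ln\pi|\le 2K_1(M_t+1)+\lambda|\ln\Leb(U)|$. For the increment we handle the two terms separately:
\begin{itemize}
\item For the first term, $\lambda\phi$ has Hölder seminorm $\le A\sqrt C M_t$ and sup norm $\le A M_t$. Combined with the $\pi$-increment bound, this gives a contribution $\le A\sqrt C(M_t+\frac{M_t^2}{\lambda})e^{AM_t/\lambda}$.
\item For the log-partition term, the map $\phi\mapsto\lambda\ln\int e^\phi$ is $\lambda$-Lipschitz in $\sup_a|\Delta\phi|$, which contributes $A\sqrt C M_t$.
\end{itemize}
Summing the two contributions gives the claimed form $A_0\sqrt C(M_t+M_t^2/\lambda)\exp(A_0M_t\sqrt C/\lambda)$. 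The extra $\sqrt C$ in the exponent is harmless slack.

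The main obstacle is bookkeeping. The measure-flow term only gives $|s_1-s_2|^{1/2}$ regularity, and we must convert this to the parabolic $\alpha$-Hölder modulus restricted to parabolic distance $\le1$. We must also keep the $M_t$ and $\lambda$ dependence explicit (polynomial prefactor times $e^{AM_t/\lambda}$) so that the constant $A_0$ is independent of $\lambda,M_t,C$. Beyond that, every step reduces to the elementary Gibbs-measure perturbation bound described above.
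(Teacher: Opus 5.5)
Your proposal is correct and follows essentially the same route as the paper: bound the exponent $\phi$ (the paper's $g$) in $\Cc_\alpha$, converting the measure term via $\sqrt C|s_1-s_2|^{1/2}\le \sqrt C(|s_1-s_2|+|x_1-x_2|^2)^{\alpha/2}$. You then derive pointwise and H\"older bounds on the Gibbs density and propagate them to $\sigma$, $\tilde b$, $\tilde r$, $\partial_t\tilde r$ and the entropy; your entropy split $\lambda\Hc(\pi)=-\int_U\lambda\phi\,\pi\,da+\lambda\ln\int_U e^{\phi}da$ is only a minor repackaging of the paper's bound through $\sup_a|\ln\pi_1-\ln\pi_2|$ and $\sup_a|\pi_2/\pi_1-1|\,|\ln\pi_2|$, and together with the mean-value bound for the $\pi$-increment along the segment between the two exponents it even avoids the $\sqrt C$ in the exponential.
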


\begin{proof}
Throughout this proof, let $(s_1,x), (s_2,y)\in[t,T]\times\R^d$ satisfy $|s_1-s_2|+|x-y|^2\leq 1$, and let $A_0$ be a generic positive constant depending on $d$, $\Leb(U)$ and the constants $K_1, K_2$ in Assumption \ref{assume.r}, but independent of $\lambda$, $M_t$ and $C$. This constant may vary from line to line.

Let $g(s,x, a):=\frac{1}{\lambda}[b(s,x,m_s,a)\cdot D_x w(s,x)+r(0,x, m_s, a)]$. A direct calculation gives that
\begin{align*}
    |g(s_1,x,a)-g(s_2,y,a)|&\leq \frac{1}{\lambda}\Big[|b(s_1,x,m_{s_1},a)-b(s_2,y,m_{s_2},a)||D_{x}w(s_1,x)|\\
    &\quad +|b(s_2,y,m_{s_2},a)||D_{x}w(s_1,x)-D_xw(s_2,y)|
     +|r(0,x,m_{s_1},a)-r(0,y,m_{s_2},a)|\Big]\\
    &\leq \frac{A_0 M_t}{\lambda}\left[(|s_1-s_2|+|x-y|^2)^{\frac{\alpha}{2}}+\mathcal{W}_2(m_{s_1},m_{s_2})\right]\\
    &\leq \frac{A_0M_t\sqrt{C}}{\lambda}(|s_1-s_2|+|x-y|^2)^{\frac{\alpha}{2}},
\end{align*}
and $ \sup_{a\in U} \|g(\cdot,\cdot,a)\|_{\Cc^0([t,T]\times \R^d)}\leq \frac{A_0 M_t}{\lambda}$,
which implies that
\begin{equation}\label{eq:lm.piest.0}
 \sup_{a\in U}\|g(\cdot,\cdot,a)\|_{\Cc_{\alpha}([t,T]\times\R^d)}\leq \frac{A_0 M_t\sqrt{C}}{\lambda},
\end{equation}
and
\begin{equation}\label{eq:lm.piest.1}
\exp\left(\frac{-A_0 M_t}{\lambda}\right) \leq \pi(s,x,a) \leq \exp\left(\frac{A_0 M_t}{\lambda}\right), \quad \forall(s,x,a)\in[t,T]\times \R^d\times U.
\end{equation}
This further yields that
\begin{equation}\label{eq:lm.piest.3}
\sup_{a\in U}[\exp(g(\cdot,\cdot, a))]_{\Cc_{\alpha}([t,T]\times\R^{d})}\leq \frac{A_0M_t\sqrt{C}}{\lambda}\exp\left(\frac{A_0 M_t}{\lambda}\right).
\end{equation}
Note that
\begin{align*}
&|\pi(s_1,x,a)-\pi(s_2,y,a)|\\
&=\left| \frac{  \exp[g(s_1,x,a)]\int_U \exp[g(s_2,y,a')]da' -   \exp[g(s_2,y,a)]\int_U \exp[g(s_1,x,a')]da' }{   \int_U \exp[g(s_1,x,a')]da'\int_U \exp[g(s_2,y,a')]da' } \right|\\
&\leq \left|  \frac{ \int_U \left( \exp[g(s_2,y,a')]-\exp[g(s_1,x, a')]\right)da'  }{\int_U \exp[g(s_2,y,a')]da'} \right| \pi(s_1,x,a) + \left|  \frac{ \exp[g(s_1,x,a)]-\exp[g(s_2,y, a)]  }{\int_U \exp[g(s_2,y,a')]da'} \right|.
\end{align*}
This, together with \eqref{eq:lm.piest.1} and \eqref{eq:lm.piest.3}, yields that
\begin{equation}\label{eq:lm.piest.5}
\sup_{a\in U}[\pi(\cdot,\cdot, a)]_{\Cc_{\alpha}([t,T]\times\R^d)}\leq \frac{A_0M_t\sqrt{C}}{\lambda}\exp\left(\frac{A_0 M_t}{\lambda}\right).
\end{equation}
By \eqref{eq:lm.piest.1} again and the fact that $\pi(s,x, a)$ is a probability density on $U$ for each $(s,x)$, we deduce that
\begin{equation*}
\left|\int_U \ln(\pi(s,x,a))\pi(s,x,a)da \right|\leq \frac{A_0 M_t}{\lambda}.
\end{equation*}
Also, it holds that
\begin{equation}\label{eq:lm.piest.5_prime}
\begin{aligned}
&\left| \int_U \ln(\pi(s_1,x,a))\pi(s_1,x, a)da  - \int_U \ln(\pi(s_2,y,a))\pi(s_2,y, a)da  \right|\\
&\leq \sup_{a\in U} \left|   \ln(\pi(s_1,x,a))  -  \ln(\pi(s_2,y,a)) \right| +\sup_{a\in U} \left| \frac{\pi(s_2,y,a)}{\pi(s_1,x,a)} - 1 \right| |\ln(\pi(s_2,y,a))|.
\end{aligned}
\end{equation}
In view of \eqref{eq:lm.piest.0}, the first term on the right-hand side above is estimated by
\begin{align*}
&\left|   \ln(\pi(s_1,x,a))  -  \ln(\pi(s_2,y,a)) \right|  \\
&\leq |g(s_1,x,a)-g(s_2,y,a)|+\left|\ln \left( \int_U \exp[g(s_1,x, a')]da'\right) -\ln \left( \int_U \exp[g(s_2,y,a')]da'\right) \right|\\
&=|g(s_1,x,a)-g(s_2,y,a)|+\left|\ln \left( \int_U \exp[g(s_1,x, a')-g(s_2,y,a')]\pi(s_2,y,a')da'\right)  \right|\\
&\leq \frac{A_0M_t\sqrt{C}}{\lambda} (|s_1-s_2|+|x-y|^{2})^{\frac{\alpha}{2}}.
\end{align*}
To estimate the second term on the right-hand side of \eqref{eq:lm.piest.5_prime}, assume without loss of generality that $\pi(s_2,y,a)\geq \pi(s_1,x,a)$. Then,
\begin{align*}
 \left| \frac{\pi(s_2,y,a)}{\pi(s_1,x,a)} - 1 \right|  &\leq \frac{\exp[g(s_2,y,a)]\int_U \exp\left[g(s_2,y,a')  + [g(\cdot,\cdot, a)]_{\Cc_{\alpha}([t,T]\times\R^d)} (|s_1-s_2|+|x-y|^{2})^{\frac{\alpha}{2}} \right] da' }{  \int_U \exp[g(s_2,y,a')]da' \exp[g(s_1,x,a)]}  - 1\\
&=  \exp[g(s_2,y,a)-g(s_1,x,a)]\exp\left[  \sup_{a\in U} [g(\cdot,\cdot,a)]_{\Cc_{\alpha}([t,T]\times\R^d)} (|s_1-s_2|+|x-y|^{2})^{\frac{\alpha}{2}}  \right]-1\\
&\leq \frac{A_0M_t\sqrt{C}}{\lambda}\exp\left(\frac{A_0M_t\sqrt{C}}{\lambda}\right) (|s_1-s_2|+|x-y|^{2})^{\frac{\alpha}{2}}.
\end{align*}
Combining the above with \eqref{eq:lm.piest.1} and \eqref{eq:lm.piest.5_prime}, we conclude that
\begin{equation*}
\|\lambda \Hc(\pi(\cdot,\cdot)\|_{\Cc_{\alpha} ([t,T]\times\R^d)}\leq A_0\sqrt{C}\left(M_t+\frac{M^2_t}{\lambda}\right)\exp\left(\frac{A_0M_t\sqrt{C}}{\lambda}\right).
\end{equation*}
Using Assumption \ref{assume.r}, \eqref{eq:lm.piest.1}, and \eqref{eq:lm.piest.5}, we obtain
\begin{align*}
    &|\tilde{r}(s_1-t,x,m_{s_1},\pi(s_1,x))-\tilde{r}(s_2-t,y,m_{s_2},\pi(s_2,y))|\\
    &\leq |\tilde{r}(s_1-t,x,m_{s_1},\pi(s_1,x))-\tilde{r}(s_2-t,y,m_{s_2},\pi(s_1,x))|\\
    &\quad +|\tilde{r}(s_2-t,y,m_{s_2},\pi(s_1,x))-\tilde{r}(s_2-t,y,m_{s_2},\pi(s_2,y))|\\
    &\leq A_0\sqrt{C}(|s_1-s_2|+|x-y|^2)^{\frac{\alpha}{2}}+\frac{A_0M_t\sqrt{C}}{\lambda}\exp\left(\frac{A_0 M_t}{\lambda}\right)(|s_1-s_2|+|x-y|^2)^{\frac{\alpha}{2}}\\
    &\leq \frac{A_0M_t\sqrt{C}}{\lambda}\exp\left(\frac{A_0 M_t}{\lambda}\right)(|s_1-s_2|+|x-y|^2)^{\frac{\alpha}{2}}.
\end{align*}
Therefore, we conclude that
\begin{equation*}
	\|\tilde{r}(\cdot-t,\cdot,m_\cdot,\pi(\cdot,\cdot))\|_{\Cc_\alpha([t,T]\times\R^d)} \leq\frac{A_0M_t\sqrt{C}}{\lambda}\exp\left(\frac{A_0 M_t}{\lambda}\right).
\end{equation*}
Similarly, we establish that
\begin{align*}
	&\|\tilde{b}(\cdot,\cdot,m_\cdot,\pi(\cdot,\cdot))\|_{\Cc_\alpha([t,T]\times\R^d)}\vee{\|\partial_t\tilde{r}(\cdot-t,\cdot,m_\cdot,\pi(\cdot,\cdot))\|_{\Cc_\alpha([t,T]\times\R^d)} } \leq\frac{A_0M_t\sqrt{C}}{\lambda}\exp\left(\frac{A_0 M_t}{\lambda}\right),\\
    &\|\sigma(\cdot,\cdot,m_\cdot)\|_{\Cc_\alpha([0,T]\times\R^d)}\leq A_0\sqrt{C}.
\end{align*}
\end{proof}

\begin{Lemma}\label{lm:iteration.estimateentropy}
	Let Assumptions \ref{assume.r} and \ref{assume.lipsa.U} hold and suppose $\nu\in\mathscr{P}_{2+\kappa}(\R^d)$ and $\delta\in \Cc^1_{\alpha}([0,T])$. Fix $0<\lambda\leq\lambda_0$. For a given $(w,m)\in\Cc^{0,1}_{\alpha}([0,T]\times\R^d)\times\mathscr{M}_{\nu}^{\kappa,C}$, consider the relaxed policy
	\begin{equation*}
		\pi(t,x,a)=\Gamma_{\lambda}(t,x,D_xw(t,x),m_t,a).
	\end{equation*}
	Then $V^{\pi, m}_\lambda(t,s,x)$ is the unique classical solution to the following PDE:
	\begin{equation}\label{eq:prop.iteraPDE}
		\begin{aligned}
			\partial_s V^{\pi, m}_\lambda (t,s, x) &+ \frac{1}{2}\operatorname{tr}\left((\sigma\sigma^T)(s,x, m_s) D^2_x V^{\pi, m}_\lambda (t,s, x)\right) + \tilde{b}(s,x, m_s, \pi(s,x))\cdot D_x V^{\pi, m}_\lambda(t,s, x) \\
			&+ \tilde{r}(s-t,x, m_s,\pi(s,x)) + \lambda \delta(s-t)\mathcal{H}(\pi(s,x))=0,\\
			V^{\pi, m}_\lambda (t,T, x) &= F(t,x,m_T).
		\end{aligned}
	\end{equation}
	Moreover, there exist constants $A^*>0$, $C^*>0$ independent of $\lambda\leq \lambda_0$, such that if $\|w\|_{\Cc^{0,1}_\alpha([0,T]\times\R^d)}\leq A^*$ and $m\in\mathscr{M}_{\nu}^{\kappa,C^*}$, then
	\begin{align*}
		\|V^{\pi, m}_\lambda\|_{\widetilde{\Cc}^{0, 1}_{\alpha,[0, T]}}\vee \|V^{\pi, m}_\lambda \|_{\widehat{W}^{1,2}_{p,[0, T]}}\vee \|J^{\pi,m}_\lambda\|_{\Cc^{0, 1}_{\alpha}([0,T]\times\R^d)}
        \leq A^*, \quad
	\mu^{\pi,m}\in\mathscr{M}_{\nu}^{\kappa,C^*}.
	\end{align*}
	And there exists another constant $A_\lambda$ depending on $\lambda$ such that $\|V^{\pi, m}_\lambda\|_{\widetilde{\Cc}^{1, 2}_{\alpha,[0, T]}}\vee\|J_{\lambda}^{\pi,m}\|_{\Cc^{0, 2}_{\alpha}([0,T]\times\R^d)} \leq A_\lambda$.
\end{Lemma}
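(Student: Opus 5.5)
Fix $t\in[0,T)$ and regard \eqref{eq:prop.iteraPDE} as a linear, uniformly parabolic Cauchy problem in $(s,x)\in[t,T]\times\R^d$, with $t$ as a parameter. By Lemma \ref{lm:pi.est}, the drift $\tilde b(\cdot,\cdot,m_\cdot,\pi)$, the diffusion $\sigma(\cdot,\cdot,m_\cdot)$ and the source $\tilde r(\cdot-t,\cdot,m_\cdot,\pi)+\lambda\delta(\cdot-t)\Hc(\pi)$ lie in $\Cc_\alpha([t,T]\times\R^d)$. Their bounds depend on $\lambda$, $\|w\|_{\Cc^{0,1}_\alpha}$ and $C$, and uniform ellipticity holds by Assumption \ref{assume.r}(ii). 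The terminal datum $F(t,\cdot,m_T)$ lies in $\Cc^{1,2}_\alpha$.

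Classical Schauder theory on $[t,T]\times\R^d$ (bounded Hölder coefficients) then gives a unique bounded classical solution $u(t,\cdot,\cdot)\in\Cc^{1,2}_\alpha$. Its norm is bounded by a constant depending on $\lambda$; this is the bound $A_\lambda$. Itô's formula, as in the proof of Theorem \ref{thm:existence_regu}(i), identifies $u=V^{\pi,m}_\lambda$, and uniqueness follows from the Feynman--Kac representation.

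For the $t$-derivative, formally differentiate \eqref{eq:prop.iteraPDE} in $t$. The function $\partial_tV$ solves the same linear PDE with source $-\partial_1\tilde r(s-t,\dots)-\lambda\delta'(s-t)\Hc(\pi)$ and terminal datum $\partial_tF(t,x,m_T)$; both are Hölder by Lemma \ref{lm:pi.est}, $\delta\in\Cc^1_\alpha$ and Assumption \ref{assume.r}(i). Difference quotients justify this, which yields the $\widetilde\Cc^{1,2}_\alpha$ bound. The same structure also gives the estimate for $J^{\pi,m}_\lambda(t,x)=V(t,t,x)$: its $x$-regularity is inherited directly, and its $t$-Hölder regularity follows from the bounds on $\partial_tV$ and $\partial_sV$.

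The main obstacle is the $\lambda$-independent bound $A^*$ and its self-consistency, i.e., the threshold $\|w\|\le A^*$ must reproduce $\le A^*$. The Schauder constants above blow up like $e^{A_0M/\lambda}$, so they cannot be used here. Instead I would use $W^{1,2}_p$ (Calderón--Zygmund) estimates with $p=\frac{d+2}{1-\alpha}$ on unit cylinders. These require only continuity of $\sigma\sigma^T$ (uniform via Assumption \ref{assume.r} and the $\frac12$-Hölder continuity of $m$) and boundedness of the drift and source. The drift is bounded by $\sup|b|\le K_1$ independently of $\pi$. For the source, Lemma \ref{lm:entropy.ln} gives $|\lambda\Hc(\pi)|\le \lambda(K_3+K_4|\ln\lambda|+K_5\ln(1+A^*))$, which is bounded uniformly in $\lambda\le\lambda_0$ and grows only logarithmically in $A^*$. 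This yields
\[
\|V\|_{\widehat W^{1,2}_{p}}\le B_0+B_1\ln(1+A^*),
\]
with $B_0,B_1$ independent of $\lambda$. The parabolic Sobolev embedding $W^{1,2}_p\hookrightarrow \Cc^{0,1}_\alpha$ (valid by the choice of $p$) converts this into a $\widetilde\Cc^{0,1}_\alpha$ bound of the same form. The bound for $\partial_tV$ is handled in the same way, and $J$ uses $D_xJ(t,x)=D_xV(t,t,x)$ together with the bound on $\partial_t D_xV$. Since $B_0+B_1\ln(1+A)\le A$ for $A$ large, choosing $A^*$ this large closes the loop.

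Finally, for $\mu^{\pi,m}$, note that the drift is bounded by $K_1$ and $\sigma$ is bounded. Standard SDE moment estimates give
\[
\sup_t\E|X_t|^{2+\kappa}\le c\Big(\int|x|^{2+\kappa}\nu(dx)+1\Big),\qquad \mathcal W_2^2(\mu_t,\mu_s)\le\E|X_t-X_s|^2\le c|t-s|,
\]
where we use \eqref{ineq:w2}. Here $c$ depends only on $K_1$, $T$ and $\kappa$, not on $\lambda$, $w$ or $C$. Taking $C^*$ to be the maximum of these constants gives $\mu^{\pi,m}\in\mathscr M^{\kappa,C^*}_\nu$. The point needing care is that the constants in the $W^{1,2}_p$ estimate do not depend on $C^*$ beyond the continuity modulus of $\sigma\sigma^T$. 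This modulus is fixed once $C^*$ is chosen from quantities independent of $A^*$, so the order of choices is: first $C^*$, then $A^*$.
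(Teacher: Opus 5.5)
Your proposal is correct and follows essentially the same route as the paper. Existence, the It\^o identification and the $\lambda$-dependent bound $A_\lambda$ come from Schauder theory with the H\"older bounds of Lemma \ref{lm:pi.est}, with $\partial_t V$ handled through the $t$-differentiated equation; the $\lambda$-independent bound $A^*$ comes from the logarithmic entropy bound of Lemma \ref{lm:entropy.ln}, local $W^{1,2}_p$ estimates, the Sobolev embedding and sublinearity, and $C^*$ from bounded-coefficient moment estimates. You also make explicit, which the paper leaves implicit, that $C^*$ must be fixed before $A^*$ because the $W^{1,2}_p$ constant depends on the time modulus of $\sigma(s,x,m_s)$.
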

	\begin{proof}
Throughout the proof, let $A_0$ be a generic positive constant depending on $d, \alpha$ and the constants in Assumptions \ref{assume.r} and \ref{assume.lipsa.U}, but independent of $\lambda$ and the norms of all orders of derivatives of $w$. This constant may vary from line to line.

\textbf{Step 1.} We first show that $V^{\pi, m}_\lambda(t,s,x)$ is the unique classical solution to \eqref{eq:prop.iteraPDE}. From $\|w\|_{\Cc^{0,1}_\alpha([0,T]\times\R^d)} < \infty$, Assumption \ref{assume.r}, $\delta\in \Cc^{1}_{\alpha}([0,T])$, and Lemma \ref{lm:pi.est}, we deduce that, for any fixed $t\in[0,T]$,
\begin{equation*}
\begin{aligned}
	&\|\tilde{b}(\cdot,\cdot,m_\cdot,\pi(\cdot,\cdot))\|_{\Cc_\alpha([t,T]\times\R^d)} < \infty, \quad
	\|(\sigma\sigma^T)(\cdot,\cdot,m_\cdot)\|_{\Cc_\alpha([t,T]\times\R^d)} < \infty,\\
	&\|\tilde{r}(\cdot-t,\cdot,m_\cdot,\pi(\cdot,\cdot))\|_{\Cc_{\alpha}([t,T]\times \R^d)} < \infty, \quad
	\|\delta(\cdot-t)\lambda\mathcal{H}(\pi(\cdot,\cdot))\|_{\Cc_{\alpha}([t,T]\times \R^d)} < \infty.
\end{aligned}
\end{equation*}
It then follows from standard parabolic PDE theory that \eqref{eq:prop.iteraPDE} admits a unique classical solution $u(t,\cdot,\cdot)\in \Cc^{1,2}_{\alpha}([t,T]\times\R^d)$ for any fixed $t\in[0,T]$. The same argument as in the proof of Theorem \ref{thm:existence_regu} Part (i) yields that $u(t,s,x)=V^{\pi,m}_\lambda(t,s,x)$.

\textbf{Step 2.}
For any $\pi\in\Ac$ and $m\in\mathscr{M}_{\nu}$, consider the process $X^{\pi,m}$ evolving as \eqref{mupim}. By the uniform boundedness of $b$ and $\sigma$, we have, for any $0 \leq s < t \leq T$,
\begin{align*}
    \mathbb{E}\left[|X^{\pi,m}_{t}-X^{\pi,m}_{s}|^{2}\right] &\leq 2\mathbb{E}\left[ \left|\int_s^t \tilde{b}(u, X^{\pi,m}_{u}, m_u, \pi(u, X^{\pi,m}_{u})) du \right|^2 \right] + 2\mathbb{E}\left[ \left|\int_s^t \sigma(u, X^{\pi,m}_{u}, m_u) dW_u \right|^2 \right] \\
    &\leq 2\|b\|_\infty^2 (t-s)^2 + 2\|\sigma\|_\infty^2 (t-s) \leq A_0 |t-s|.
\end{align*}
Similarly, for the $(2+\kappa)$-th moment, the Burkholder-Davis-Gundy (BDG) inequality and the boundedness of the coefficients directly yield
\begin{equation*}
    \mathbb{E}\left[|X^{\pi,m}_{t}|^{2+\kappa}\right] \leq A_0 \left( \mathbb{E}[|X^{\pi,m}_0|^{2+\kappa}] + 1 \right).
\end{equation*}
As the initial distribution satisfies $\nu \in \mathscr{P}_{2+\kappa}(\mathbb{R}^d)$, the right-hand side is uniformly bounded. Consequently, we conclude that
\begin{equation}\label{eq:est.mu}
\begin{cases}
	\sup_{t\in[0,T]}\int_{\mathbb{R}^d}|x|^{2+\kappa}\mu^{\pi,m}_t(dx) = \sup_{t\in[0,T]}\mathbb{E}\left[|X^{\pi,m}_{t}|^{2+\kappa}\right] \leq A_0,\\
	\mathcal{W}_2^2(\mu^{\pi,m}_t,\mu^{\pi,m}_s) \leq \mathbb{E}\left[|X^{\pi,m}_{t}-X^{\pi,m}_{s}|^{2}\right] \leq A_0|t-s|,
\end{cases}
\end{equation}
which implies the existence of the uniform constant $C^*$.

\textbf{Step 3.} We now provide estimates on the solution to \eqref{eq:prop.iteraPDE}. Fix an arbitrary $t\in [0,T]$.
By Lemma \ref{lm:entropy.ln},
\begin{equation*}
	|\lambda\mathcal{H}(\pi(s,x))|\leq A_0\left( 1+\ln\left(1+|D_x w(s,x)|\right) \right),
\end{equation*}
which implies
\begin{equation*}
	\|V^{\pi, m}_\lambda(t,\cdot,\cdot)\|_{\Cc^0([t,T]\times\R^d)} \leq A_0 \left(1+\ln\left(1+\|D_x w\|_{L^\infty([t,T]\times\R^d)} \right) \right).
\end{equation*}
Recall $p=\frac{d+2}{1-\alpha}$. It follows from the Sobolev embedding (see, e.g., \cite[Lemma 3.3 in Chapter II]{ladyzhenskaia1968linear}) and  $W^{1,2}_p$ estimates with truncation arguments (see, e.g., \cite[Theorem 5.2.10]{krylov2008lectures}) that
\begin{align*}
	\|V^{\pi,m}_\lambda(t, \cdot,\cdot)\|_{\Cc^{0,1}_{\alpha}(D_T(s_0, x_0))}
	&\leq A_0 \|V^{\pi, m}_\lambda(t,\cdot,\cdot)\|_{W^{1, 2}_p(D_T(s_0, x_0))}\\
	&\leq  A_0\Bigg( \|V^{\pi, m}_\lambda(t,\cdot,\cdot)\|_{L^p(D_{T+1}(s_0,x_0))} \\
	&\qquad\quad + \|\tilde{r}(\cdot-t, \cdot, m_\cdot,\pi(\cdot,\cdot)) + \lambda \delta(\cdot-t) \mathcal{H}(\pi(\cdot,\cdot))\|_{L^p(D_{T+1}(s_0, x_0))} \\
	&\qquad\quad + \|F(t,\cdot,m_T)\|_{W^{2,p}(B_{T+1}(x_0))} \Bigg).
\end{align*}
We then deduce that
\begin{equation}\label{eq:lm.V1}
	\|V^{\pi,m}_\lambda(t, \cdot,\cdot)\|_{\Cc^{0,1}_{\alpha}(D_T(s_0, x_0))} \vee \|V^{\pi, m}_\lambda(t,\cdot,\cdot)\|_{W^{1, 2}_p(D_T(s_0, x_0))} \leq A_0\left( 1+\ln \left( 1+\|D_xw\|_{L^\infty([t,T]\times \R^d)} \right)\right).
\end{equation}
Taking the supremum over all $(s_0,x_0)\in[t,T]\times \R^d$ yields that
\begin{equation}\label{eq:lm.V3}
	\|V^{\pi,m}_\lambda(t, \cdot,\cdot)\|_{\Cc^{0,1}_{\alpha}([t,T]\times\R^d)} \vee \|V^{\pi, m}_\lambda(t,\cdot,\cdot)\|_{W^{1, 2,\ul}_p([t,T]\times\R^d)}\leq A_0\left( 1+\ln \left( 1+\|D_xw\|_{L^\infty([t,T]\times\R^d)} \right)\right).
\end{equation}
From the regularity of $\delta$ and the conditions on $r$ and $F$ in Assumption \ref{assume.r}, it is clear that for any $(t,s,x)$, the derivative $W^{\pi, m}_\lambda := \partial_t V^{\pi, m}_\lambda$ admits the following representation
\begin{align}
	W^{\pi, m}_\lambda(t,s,x) = \mathbb{E}_{s,x}\bigg[&\int_s^T -\left(\partial_t \tilde{r}(l-t, X^{\pi,m}_l, m_l, \pi(l, X^{\pi,m}_l))+\partial_t \delta(l-t)\lambda\mathcal{H}(\pi(l, X^{\pi,m}_l))\right) dl \notag \\
    &+ \partial_t F(t,X^{\pi,m}_T,m_T)\bigg]. \label{eq:prop.Vt}
\end{align}
As in Step 1, we conclude that 
$W^{\pi, m}_\lambda$ is the unique classical solution to the equation
\begin{equation}\label{eq:prop.PDEW}
\begin{aligned}
	 \partial_s W^{\pi, m}_\lambda (t,s, x)&+\frac{1}{2}\operatorname{tr}\left((\sigma\sigma^T)(s,x, m_s) D^2_x W^{\pi, m}_\lambda (t,s, x)\right)
	+\tilde{b}(s,x, m_s, \pi(s,x))\cdot D_x W^{\pi, m}_\lambda(t,s, x) \\
    &-\partial_t \tilde{r}(s-t,x, m_s, \pi(s,x))  -\lambda\partial_t\delta(s-t)\mathcal{H}(\pi(s,x))=0,\\
    W^{\pi, m}_\lambda (t,T, x)&= \partial_{t}F(t,x,m_T),
\end{aligned}
\end{equation}
with all mixed derivatives being consistent, e.g., $\partial_{x}\partial_{t}V^{\pi,m}_{\lambda}=\partial_{t}\partial_{x}V^{\pi,m}_{\lambda}$. Then, by an argument similar to that for \eqref{eq:lm.V3}, we obtain
\begin{equation*}\label{eq:lm.1_prime}
\|W^{\pi,m}_\lambda(t, \cdot,\cdot)\|_{\Cc^{0,1}_{\alpha}([t,T]\times\R^d)} \vee \|W^{\pi, m}_\lambda(t,\cdot,\cdot)\|_{W^{1, 2,\ul}_p([t,T]\times\R^d)}\leq A_0\left( 1+\ln \left( 1+\|D_xw\|_{L^\infty([t,T]\times\R^d)} \right)\right).
\end{equation*}
Combining these estimates, we obtain
\begin{equation}\label{est:1}
\|V^{\pi, m}_\lambda\|_{\widetilde{\Cc}^{0, 1}_{\alpha,[0, T]}}\vee \|V^{\pi, m}_\lambda \|_{\widehat{W}^{1,2}_{p,[0, T]}}\leq  A_0\left( 1+\ln \left(1+ \|w\|_{\Cc^{0,1}_{\alpha}([0,T]\times\R^d)} \right)\right).
\end{equation}
Next, we analyze the H\"older norm on the diagonal function $J^{\pi,m}_\lambda(t,x)= V^{\pi, m}_\lambda(t,t,x)$. Take $(s,x), (t,y)\in[0,T]\times\R^d$ satisfying $|s-t|+|x-y|^2\leq 1$. Without loss of generality, assume $t\leq s$. Then
\begin{align*}
\frac{\left| V^{\pi, m}_\lambda(s,s,x)- V^{\pi, m}_\lambda(t,t,y) \right|}{ (|s-t|+|x-y|^2)^{\alpha/2} } &\leq \frac{\left| V^{\pi, m}_\lambda(s,s,x)- V^{\pi, m}_\lambda(t,s,x) \right|}{ (|s-t|+|x-y|^2)^{\alpha/2} } + \frac{\left| V^{\pi, m}_\lambda(t,s,x)- V^{\pi, m}_\lambda(t,t,y) \right|}{ (|s-t|+|x-y|^2)^{\alpha/2} }\\
&\leq \sup_{t\in [0,T]} \|W^{\pi,m}_{\lambda}(t,\cdot,\cdot)\|_{\Cc^{0}([t,T]\times\R^d)} + \sup_{t\in[0,T]} [V^{\pi,m}_\lambda(t,\cdot,\cdot)]_{\Cc_{\alpha}([t,T]\times\R^d)}.
\end{align*}
Therefore, we conclude that
\begin{equation*}
    \|J^{\pi,m}_\lambda\|_{\Cc_{\alpha}([0,T]\times\R^d)}\leq \sup_{t\in [0,T]} \|W^{\pi,m}_{\lambda}(t,\cdot,\cdot)\|_{\Cc^{0}([t,T]\times\R^d)}+ \sup_{t\in [0,T]} \|V^{\pi,m}_\lambda(t,\cdot,\cdot)\|_{\Cc_{\alpha}([t,T]\times\R^d)}.
\end{equation*}
Similarly, by noting the consistency of mixed derivatives, we obtain
\begin{align}
     \|J^{\pi,m}_\lambda\|_{\Cc^{0,1}_{\alpha}([0,T]\times\R^d)} &\leq \sup_{t\in [0,T]} \|W^{\pi,m}_{\lambda}(t,\cdot,\cdot)\|_{\Cc^{0,1}([t,T]\times\R^d)}+ \sup_{t\in[0,T]} \|V^{\pi,m}_\lambda(t,\cdot,\cdot)\|_{\Cc^{0,1}_{\alpha}([t,T]\times\R^d)},\label{est:dig1}\\
      \|J^{\pi,m}_\lambda\|_{\Cc^{0,2}_{\alpha}([0,T]\times\R^d)} &\leq \sup_{t\in [0,T]} \|W^{\pi,m}_{\lambda}(t,\cdot,\cdot)\|_{\Cc^{0,2}([t,T]\times\R^d)}+ \sup_{t\in[0,T]} \|V^{\pi,m}_\lambda(t,\cdot,\cdot)\|_{\Cc^{0,2}_{\alpha}([t,T]\times\R^d)}\label{est:dig2}.
\end{align}
Then, it follows from \eqref{est:1} and \eqref{est:dig1} that
\begin{equation}\label{est:2}
\|V^{\pi, m}_\lambda\|_{\widetilde{\Cc}^{0, 1}_{\alpha,[0, T]}}\vee \|V^{\pi, m}_\lambda \|_{\widehat{W}^{1,2}_{p,[0, T]}}\vee \|J^{\pi,m}_\lambda\|_{\Cc^{0,1}_{\alpha}([0,T]\times\R^d)}\leq  A_0\left( 1+\ln \left( 1+\|w\|_{\Cc^{0,1}_{\alpha}([0,T]\times\R^d)} \right)\right).
\end{equation}

Note that the function $y\mapsto A_0(1+\ln (1+y))$, which maps $[0, \infty)$ to $[0, \infty)$, has sublinear growth. There exists a constant $A^*$ such that
\begin{equation*}
0 \leq A_0(1+\ln (1+y)) \leq A^*, \quad \text{whenever } 0 \leq y \leq A^*.
\end{equation*}
We can thus conclude from \eqref{est:2} that
\begin{equation*}
\|V^{\pi, m}_\lambda\|_{\widetilde{\Cc}^{0, 1}_{\alpha,[0, T]}}\vee \|V^{\pi, m}_\lambda \|_{\widehat{W}^{1,2}_{p,[0, T]}}\vee \|J^{\pi,m}_\lambda\|_{\Cc^{0,1}_{\alpha}([0,T]\times\R^d)}\leq A^*,\quad \text{provided that }\|w\|_{\Cc^{0,1}_{\alpha}([0,T]\times\R^d)} \leq A^*.
\end{equation*}
Moreover, by the conditions in Assumption \ref{assume.r}, Schauder estimate to \eqref{eq:prop.iteraPDE} and \eqref{eq:prop.PDEW} (see, e.g., \cite[Theorem 9.2.2]{krylov1996lectures}) yields that
\begin{align}
\|V^{\pi, m}_\lambda\|_{\widetilde{\Cc}^{1, 2}_{\alpha,[0, T]}} \leq \tilde{C}\sup_{t\in[0,T]}\bigg(&\|\tilde{r}(\cdot-t,\cdot,m_{\cdot},\pi(\cdot,\cdot))\|_{\Cc_{\alpha}\left([t,T]\times\R^d\right)} + \|\partial_t \tilde{r}(\cdot-t,\cdot,m_\cdot,\pi(\cdot,\cdot))\|_{\Cc_{\alpha}\left([t,T]\times\R^d\right)}  \notag \\
&+ \|\lambda \delta(\cdot-t) \mathcal{H}(\pi(\cdot,\cdot))\|_{\Cc_{\alpha}\left([t,T]\times\R^d\right)}
+\|\lambda \partial_t\delta(\cdot-t) \mathcal{H}(\pi(\cdot,\cdot))\|_{\Cc_{\alpha}\left([t,T]\times\R^d\right)}  \label{eq:V.est}  \\
&+ \|F(t,\cdot,m_{T})\|_{\Cc^{2}_{\alpha}(\R^d)}+  \|\partial_t F(t,\cdot,m_{T})\|_{\Cc^{2}_{\alpha}(\R^d)}\bigg), \notag
\end{align}
where $\tilde{C}$ is a constant depending only on $\|\tilde{b}(\cdot,\cdot,m_\cdot,\pi(\cdot,\cdot))\|_{\Cc_\alpha\left([0,T]\times\R^d\right)}$ and $\|\sigma(\cdot,\cdot,m_\cdot)\|_{\Cc_\alpha\left([0,T]\times\R^d\right)}$, such that $\tilde{C}$ can be bounded by $\varphi\left(\|\tilde{b}\|_{\Cc_\alpha([0,T]\times\R^d)},\|\sigma\|_{\Cc_\alpha([0,T]\times\R^d)}\right)$ with $\varphi(\cdot,\cdot):[0, \infty)\times[0,\infty) \rightarrow[0, \infty)$ being a fixed increasing function on the two arguments.  Therefore, by Lemma \ref{lm:pi.est}, \eqref{eq:est.mu} and \eqref{eq:V.est}, given $\|w\|_{\Cc^{0,1}_{\alpha}([0,T]\times\R^d)} \leq A^*$ and $m\in\mathscr{M}_{\nu}^{\kappa,C^*}$, it holds that
\begin{equation*}
\|V^{\pi, m}_\lambda\|_{\widetilde{\Cc}^{1, 2}_{\alpha,[0, T]}} \leq \varphi\left(\frac{A_0 A^*\sqrt{C^*}}{\lambda} \exp \left(\frac{A_0 A^*}{\lambda}\right),A_0\sqrt{C^*}\right) \frac{A_0 (A^*)^2\sqrt{C^*}}{\lambda} \exp \left(\frac{A_0 A^*\sqrt{C^*}}{\lambda}\right)=: A_\lambda,
\end{equation*}
which gives $\|V^{\pi, m}_\lambda\|_{\widetilde{\Cc}^{1, 2}_{\alpha,[0, T]}} \leq A_\lambda$. Then, by \eqref{est:dig2}, we also have $ \|J^{\pi,m}_\lambda\|_{\Cc^{0,2}_{\alpha}([0,T]\times\R^d)}\leq A_{\lambda}$, which completes the proof.
\end{proof}



Fix an arbitrary $0<\lambda\leq \lambda_0$ and define
\begin{equation}\label{compact-set}
\mathcal{M}_\lambda := \left\{ w\in \mathcal{C}^{0,2}_{\alpha}([0,T]\times\mathbb{R}^d) : \|w\|_{\mathcal{C}^{0,1}_{\alpha}([0,T]\times\mathbb{R}^d)}\leq A^*, \, \|w\|_{\mathcal{C}^{0,2}_{\alpha}([0,T]\times\mathbb{R}^d)}\leq A_\lambda \right\}.
\end{equation}
Note that $\mathcal{M}_\lambda$ is a convex subset of $\mathcal{C}^{0,2}_{\alpha}([0,T]\times\mathbb{R}^d)$. 

\begin{Lemma}\label{lm:Phi.continuous0}
Fix an arbitrary $\beta\in[0,\alpha)$. Take a sequence $(w^n, m^n)_{n\in \mathbb{N}\cup\{\infty\}} \subset \mathcal{M}_{\lambda} \times \mathscr{M}_{\nu}^{\kappa,C^*}$ such that
\begin{equation*}
    \lim_{n\rightarrow\infty} (\|w^n-w^\infty\|_{\Cc^{0,2,\wg}_{\beta}([0,T]\times\R^d)} + d(m^n, m^\infty)) = 0.
\end{equation*}
Set $\pi^n(t,x,a)=\Gamma_\lambda(t,x,D_x w^n(t,x), m^n_t, a)$, then for each $N\in \mathbb{N}$:
\begin{equation}\label{eq:lemma3.4}
\begin{aligned}
    &\lim_{n\rightarrow\infty} \Big(
    \|\tilde{b}^n - \tilde{b}^\infty\|_{\Cc_{\beta}(D_N)} + \|\sigma^n(\sigma^n)^\top - \sigma^\infty(\sigma^\infty)^\top\|_{\Cc_{\beta}(D_N)} \\
    &\qquad\quad + \sup_{t\in[0,T]} \|(\tilde{r}^n - \tilde{r}^\infty)(\cdot-t, \cdot)\|_{\Cc_{\beta}(D_N(t,0))} + \sup_{t\in[0,T]} \| (\partial_{t}\tilde{r}^n - \partial_{t}\tilde{r}^\infty) (\cdot-t, \cdot) \|_{\Cc_{\beta}(D_N(t,0))} \\
    &\qquad\quad  + \sup_{t\in[0,T]}\|\lambda\delta(\cdot-t)\mathcal{H}(\pi^n) - \lambda\delta(\cdot-t)\mathcal{H}(\pi^\infty)\|_{\Cc_{\beta}(D_N(t,0))} \\
    &\qquad\quad + \sup_{t\in[0,T]}\|\lambda\delta'(\cdot-t)\mathcal{H}(\pi^n) - \lambda\delta'(\cdot-t)\mathcal{H}(\pi^\infty)\|_{\Cc_{\beta}(D_N(t,0))} \\
    &\qquad\quad + \sup_{t\in[0,T]}\|F(t,\cdot,m^n_T)-F(t,\cdot,m^\infty_T)\|_{\Cc^{2}_{\beta}(B_N(0))} \\
    &\qquad\quad + \sup_{t\in[0,T]}\|\partial_tF(t,\cdot,m^n_T)-\partial_tF(t,\cdot,m^\infty_T)\|_{\Cc^{2}_{\beta}(B_N(0))} \Big) = 0,
\end{aligned}
\end{equation}
where, for $n\in \N\cap\{\infty\}$, we use the notation
\begin{align*}
    \tilde{b}^n(t,x) &:= \tilde{b}(t, x, m^n_t, \pi^n(t,x)), \quad \sigma^n(t,x) := \sigma (t,x, m^n_t), \\
    \tilde{r}^n(s-t,x) &:= \tilde{r}(s-t, x, m^n_s, \pi^n(s,x)).
\end{align*}
\end{Lemma}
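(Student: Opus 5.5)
The plan is to reduce every term in \eqref{eq:lemma3.4} to two convergence facts on $D_N$. Set
\[
g^n(s,x,a):=\tfrac1\lambda\big[b(s,x,m^n_s,a)\cdot D_xw^n(s,x)+r(0,x,m^n_s,a)\big], \qquad n\in\mathbb N\cup\{\infty\}.
\]
The two facts are
\[
\text{(I)}\quad \sup_{a\in U}\|g^n(\cdot,\cdot,a)-g^\infty(\cdot,\cdot,a)\|_{\Cc_\beta(D_N)}\to0, \qquad
\text{(II)}\quad \sup_{a\in U}\|\pi^n(\cdot,\cdot,a)-\pi^\infty(\cdot,\cdot,a)\|_{\Cc_\beta(D_N)}\to0,
\]
together with \emph{uniform} $\Cc_\alpha$ bounds on all $g^n,\pi^n$. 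The uniform bounds come from Lemma \ref{lm:pi.est} with $M_t=A^*$ and $C=C^*$, since all $w^n\in\mathcal M_\lambda$ and all $m^n\in\mathscr M^{\kappa,C^*}_\nu$. The key tool is the interpolation inequality
\[
[h]_{\Cc_\beta(D_N)}\le 2\|h\|_{\Cc^0(D_N)}^{1-\beta/\alpha}[h]_{\Cc_\alpha(D_N)}^{\beta/\alpha}.
\]
It shows that uniform convergence plus a uniform $\Cc_\alpha$ bound gives $\Cc_\beta$ convergence. Hence it suffices to prove $\Cc^0(D_N)$ convergence for each quantity and to check a uniform $\Cc_\alpha$ bound.

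\textbf{Step 1: prove (I).} The difference $g^n-g^\infty$ splits as
\[
\tfrac1\lambda\Big[(b(\cdot,m^n)-b(\cdot,m^\infty))\cdot D_xw^n+b(\cdot,m^\infty)\cdot(D_xw^n-D_xw^\infty)+r(0,\cdot,m^n)-r(0,\cdot,m^\infty)\Big].
\]
\begin{itemize}
\item The first and third pieces are bounded uniformly by $C\,d(m^n,m^\infty)$ via Assumption \ref{assume.r}(iv).
\item The middle piece tends to $0$ uniformly on $D_N$, since the weighted norm $\Cc^{0,2,\wg}_\beta$ controls $\|w^n-w^\infty\|_{\Cc^{0,1}_\beta(D_N)}$ for each $N$ (up to the factor $2^N$).
\end{itemize}
The uniform $\Cc_\alpha$ bound on $g^n$ is \eqref{eq:lm.piest.0}, so (I) follows by interpolation.

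\textbf{Step 2: prove (II).} Use $|e^u-e^v|\le e^{\max}|u-v|$ together with the two-sided bound \eqref{eq:lm.piest.1}, with constants uniform in $n$. This gives uniform convergence of both the numerators $e^{g^n}$ and the normalizers $\int_U e^{g^n}$, so $\pi^n\to\pi^\infty$ uniformly on $D_N\times U$. Interpolation with \eqref{eq:lm.piest.5} then gives (II).

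\textbf{Step 3: the terms built from $\pi^n$.} Each of these is $\tilde f^n-\tilde f^\infty=\int_U[f(\cdot,m^n,a)\pi^n-f(\cdot,m^\infty,a)\pi^\infty]\,da$ with $f\in\{b,r,\partial_tr\}$. Split it as
\[
\int_U\big(f(\cdot,m^n)-f(\cdot,m^\infty)\big)\pi^n\,da+\int_U f(\cdot,m^\infty)\big(\pi^n-\pi^\infty\big)\,da .
\]
\begin{itemize}
\item The first integral tends to $0$ uniformly by the measure Lipschitz bound. A uniform $\Cc_\alpha$ bound holds since $b$, $r$, $\partial_t r$ are $\Cc_\alpha$ in $(t,x)$ and $\tfrac12$-Hölder in time through $m_s$, as in Lemma \ref{lm:pi.est}.
\item The second integral is handled by (II).
\item For $\tilde r(\cdot-t,\cdot)$ and $\partial_t\tilde r$, the shift by $t$ is harmless: the bounds in Assumption \ref{assume.r}(i),(iv) are uniform in the time argument. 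Hence the estimates are uniform in $t$, which gives the $\sup_t$ over $D_N(t,0)$.
\item The terms $\sigma^n(\sigma^n)^\top$ and $F(t,\cdot,m^n_T),\partial_tF(t,\cdot,m^n_T)$ need only the measure continuity of (iv) and interpolation against the $\Cc_\alpha$ (resp.\ $\Cc^2_\alpha$) bound of (i). The $F$-terms converge uniformly in $x$ and $t$ because the continuity in $m$ is uniform in the other arguments.
\end{itemize}

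\textbf{Step 4: the entropy terms (main obstacle).} Write $\lambda\mathcal H(\pi^n)=-\lambda\int_U\ln\pi^n\,\pi^n\,da$ and $\ln\pi^n=g^n-\ln\int_Ue^{g^n}$. By (I) and the positive lower bound on $\int_U e^{g^n}$, $\ln\pi^n\to\ln\pi^\infty$ uniformly on $D_N\times U$, with values bounded uniformly in $n$. Combined with uniform convergence of $\pi^n$, this gives $\mathcal H(\pi^n)\to\mathcal H(\pi^\infty)$ uniformly on $D_N$. Lemma \ref{lm:pi.est} supplies a uniform $\Cc_\alpha$ bound on $\lambda\mathcal H(\pi^n)$, so interpolation gives $\Cc_\beta$ convergence. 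Multiplying by $\delta(\cdot-t)$ or $\delta'(\cdot-t)$, which are $\Cc_\alpha$ uniformly in $t$, preserves this by the product rule for Hölder seminorms.

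The delicate point throughout is the \emph{uniformity} of all $\Cc_\alpha$ bounds and convergence rates in $n$, in $t$, and over $a\in U$. It is needed for interpolation and for the $\sup_t$, and it rests on the fixed $\lambda$ and on the uniform constants $A^*,C^*$.
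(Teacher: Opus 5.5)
Your argument is correct, but the order of operations differs from the paper's proof.

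\textbf{What the paper does.} It applies interpolation only to the measure-dependent coefficient differences, for example $\|b^n-b^\infty\|_{\Cc_\beta(D_N)}\le A_0\, d^{1-\beta/\alpha}(m^n,m^\infty)$, and likewise for $r$, $\partial_t r$, $\sigma\sigma^\top$, $F$, $\partial_tF$. It then carries $\Cc_\beta$ estimates through every stage of the Gibbs map using the H\"older product rule:
\begin{itemize}
\item first $g^n$,
\item then the log-normalizer difference $\ln\big(\int_U e^{g^n}da\big)-\ln\big(\int_U e^{g^\infty}da\big)$,
\item then $e^{g^n}$ and $\pi^n$,
\item then the averaged coefficients,
\item and finally $\mathcal{H}(\pi^n)$, through a three-term decomposition.
\end{itemize}
The output is a stability bound of the form $A_0\big[d^{1-\beta/\alpha}(m^n,m^\infty)+\|D_x(w^n-w^\infty)\|_{\Cc_\beta(D_N)}\big]$ for each quantity.

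\textbf{What you do instead.} You pass only sup-norm convergence through the Gibbs map, where elementary Lipschitz estimates for $\exp$, $\ln$ and the normalization suffice, given the two-sided bound on $\pi^n$. You then interpolate once at the end against the uniform $\Cc_\alpha$ bounds that Lemma~\ref{lm:pi.est} already supplies for $\pi^n$, $\tilde b^n$, $\tilde r^n$, $\partial_t\tilde r^n$, $\lambda\mathcal{H}(\pi^n)$ and $\sigma(\cdot,\cdot,m^n_\cdot)$.

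\textbf{Comparison.} Your route avoids the H\"older-seminorm bookkeeping for the logarithm and the quotient, so it is shorter. The price is a weaker rate, of order $\|\cdot\|_{\Cc^0}^{1-\beta/\alpha}$, which does not matter for a continuity statement. Note also that your (I) and (II) are stronger than what you use: only their sup-norm parts enter Steps 3--4.

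\textbf{Two points to tighten.}
\begin{itemize}
\item The inequality you single out as the key tool is zeroth order. Uniform convergence of $F(t,\cdot,m^n_T)$ does not by itself control $D_x^2F$. For the $F$ and $\partial_tF$ terms you need the higher-order H\"older interpolation
$\|h\|_{\Cc^2_\beta(B_N(0))}\le C\|h\|_{\Cc^0(B_N(0))}^{1-\frac{2+\beta}{2+\alpha}}\|h\|_{\Cc^2_\alpha(B_N(0))}^{\frac{2+\beta}{2+\alpha}}$,
which is what the paper uses. Combine it with the $\Cc^{1,2}_\alpha$ bound on $F$ in Assumption~\ref{assume.r}(i).
\item The uniform $\Cc_\alpha$ bound on $\sigma^n(\sigma^n)^\top$ does not follow from (i) alone, because $m^n_t$ varies with $t$. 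It needs the time-H\"older regularity of $m^n\in\mathscr{M}_\nu^{\kappa,C^*}$ together with (iv), exactly as in Lemma~\ref{lm:pi.est}.
\end{itemize}
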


\begin{proof}
    In the following proof,  let $A_0$ be a generic finite constant depending on $A^*, A_\lambda$, $C^*$, $N,\lambda$, $\Leb(U)$ but independent of $n$, which may change from line to line.

   Write $g^n(t,x,a) := \frac{1}{\lambda}[b(t,x,m^n_t,a) \cdot D_x w^n(t,x) + r(0,x,m^n_t,a)]$ for $n\in \mathbb{N}\cup\{\infty\}$. It follows from \eqref{eq:lm.piest.0} and \eqref{eq:lm.piest.3} that
\begin{align}\label{eq:lm.C12est0'}
    &\sup_{a\in U}\|g^n(\cdot,\cdot, a)\|_{\Cc_\beta(D_{N})} \leq \sup_{a\in U}\|g^n(\cdot,\cdot, a)\|_{\Cc_\alpha(D_{N})} \leq A_0, \quad \forall n\in \mathbb{N}\cup\{\infty\}, \\
    &\sup_{a\in U}\|\exp(g^n(\cdot,\cdot,a))\|_{\Cc_\beta(D_{N})} \leq \sup_{a\in U}\|\exp(g^n(\cdot,\cdot,a))\|_{\Cc_\alpha(D_{N})} \leq A_0, \quad \forall n\in \mathbb{N}\cup\{\infty\}.\label{eq:lm.C12est11}
\end{align}
Given generic functions $f(t,x), g(t,x), h(t,x,a)$ with $D$ being a domain in $[0,T]\times\R^d$, the following results hold
 \be\label{eq:lm.C12est0}
 [fg]_{\Cc_\beta(D)}\leq \|f\|_{L^\infty(D)}[g]_{\Cc_\beta(D)}+ \|g\|_{L^\infty(D)}[f]_{\Cc_\beta(D)}, \quad  \left[ \int_U h(t,x,a)da \right]_{\Cc_\beta(D)}\leq A_0\sup_{a\in U} [h(\cdot,\cdot,a)]_{\Cc_\beta(D)}.
 \ee
First, \eqref{eq:lm.C12est0'} and \eqref{eq:lm.C12est0} together imply that
\begin{align*}
    \sup_{a\in U}\| (g^n-g^\infty)(\cdot,\cdot, a)\|_{\Cc_{\beta}(D_{N})} \leq  A_0\Big[ & \sup_{a\in U}\|b^n(\cdot,\cdot,a)-b^{\infty}(\cdot,\cdot,a)\|_{\Cc_{\beta}(D_{N})} + \|D_x(w^n-w^\infty)\|_{\Cc_{\beta}(D_{N})} \\
    &+ \sup_{a\in U}\|r^n(\cdot,\cdot,a)-r^{\infty}(\cdot,\cdot,a)\|_{\Cc_{\beta}(D_{N})} \Big],
\end{align*}
where $b^{n}(t,x,a) := b(t,x,m^n_t,a)$ and $r^n(t,x,a) := r(0,x,m^n_t,a)$ for $n\in \mathbb{N}\cup\{\infty\}$.

It follows from the interpolation inequality (see, e.g. \cite[Exercise 3.26]{krylov1996lectures}) and Lipschitz property of $b$ in Assumption \ref{assume.r} that
\begin{align}
    \|b^n(\cdot,\cdot,a)-b^{\infty}(\cdot,\cdot,a)\|_{\Cc_{\beta}(D_{N})}&\leq A_0  \|b^n(\cdot,\cdot,a)-b^{\infty}(\cdot,\cdot,a)\|_{\Cc_{\alpha}(D_{N})}^{\frac{\beta}{\alpha}}  \|b^n(\cdot,\cdot,a)-b^{\infty}(\cdot,\cdot,a)\|_{L^{\infty}(D_{N})}^{1-\frac{\beta}{\alpha}}\nonumber\\
    &\leq A_0 \|b^n(\cdot,\cdot,a)-b^{\infty}(\cdot,\cdot,a)\|_{L^{\infty}(D_{N})}^{1-\frac{\beta}{\alpha}}\leq A_0d^{1-\frac{\beta}{\alpha}}(m^n,m^{\infty}).\label{es:bn}
    \end{align}
Similarly, we have that
\begin{align}
      & \|r^n(\cdot,\cdot,a)-r^{\infty}(\cdot,\cdot,a)\|_{\Cc_{\beta}(D_{N})}\leq A_0d^{1-\frac{\beta}{\alpha}}(m^n,m^{\infty}),\nonumber\\
        &\|r_t^n(\cdot,\cdot,a)-r_t^{\infty}(\cdot,\cdot,a)\|_{\Cc_{\beta}(D_{N}(t,0))}\vee \|\partial_tr_t^n(\cdot,\cdot,a)-\partial_tr_t^{\infty}(\cdot,\cdot,a)\|_{\Cc_{\beta}(D_{N}(t,0))}\leq A_0d^{1-\frac{\beta}{\alpha}}(m^n,m^{\infty}),\label{es:rn}\\
        &\| \sigma^n(\sigma^n)^\top - \sigma^\infty(\sigma^\infty)^\top \|_{\Cc_{\beta}(D_N)}\leq A_0d^{1-\frac{\beta}{\alpha}}(m^n,m^{\infty}),\label{es:lm3.4_1}\\
         &\|F(t,\cdot,m^n_T)-F(t,\cdot,m^\infty_T)\|_{\Cc^{2}_{\beta}(B_N(0))}\leq A_0\|F(t,\cdot,m^n_T)-F(t,\cdot,m^\infty_T)\|_{L^{\infty}(B_N(0))}^{1-\frac{2+\beta}{2+\alpha}},\label{es:lm3.4_2}\\
          &\|\partial_tF(t,\cdot,m^n_T)-\partial_tF(t,\cdot,m^\infty_T)\|_{\Cc^{2}_{\beta}(B_N(0))}\leq A_0\|\partial_tF(t,\cdot,m^n_T)-\partial_tF(t,\cdot,m^\infty_T)\|_{L^{\infty}(B_N(0))}^{1-\frac{2+\beta}{2+\alpha}},\label{es:lm3.4_22}
\end{align}
where $r_t^n(s,x,a) := r(s-t,x,m^n_s,a)$ for $n\in \mathbb{N}\cup\{\infty\}$.
Therefore, we obtain
\begin{align}\label{eq:lm.C12est0''}
    \sup_{a\in U}\| (g^n-g^\infty)(\cdot,\cdot, a)\|_{\Cc_{\beta}(D_{N})}\leq &A_0[d^{1-\frac{\beta}{\alpha}}(m^n,m^{\infty}) +\| D_x(w^n-w^\infty)\|_{\Cc_{\beta}(D_{N})}].
\end{align}
A direct calculation gives that
\begin{align*}
    &\ln\left( \int_U \exp[g^n(t,x,a)] da \right) - \ln\left( \int_U \exp[g^\infty(t,x,a)]da \right) \\
    &= \ln\left( \int_U \exp[g^n(t,x,a)-g^\infty(t,x,a)] \pi^\infty(t,x,a)da \right)\\
	&= \ln\left( 1 + \int_U \left( \exp[g^n(t,x,a)-g^\infty(t,x,a)] - 1 \right) \pi^\infty(t,x,a) da \right).
\end{align*}
Applying \eqref{eq:lm.C12est0} to the term  inside the logarithm, and combining it with \eqref{eq:lm.piest.5}, \eqref{eq:lm.C12est0'}, \eqref{eq:lm.C12est11} and \eqref{eq:lm.C12est0''}, we obtain that
\begin{align}\label{eq:lm.C12est7'}
	&\left\| \ln\left( \frac{\int_U \exp[g^n(\cdot,\cdot,a)] da} {\int_U \exp[g^\infty(\cdot,\cdot,a)]da} \right)  \right\|_{\Cc_\beta(D_{N})} \notag\\
	&= \left\| \ln\left( \int_U \exp[g^n(\cdot,\cdot,a)-g^\infty(\cdot,\cdot,a)] \pi^\infty(\cdot,\cdot,a)da \right) \right\|_{L^\infty(D_N)} \notag \\
	&\quad + \left[ \ln\left( 1 + \int_U \left( \exp[g^n(\cdot,\cdot,a)-g^\infty(\cdot,\cdot,a)] - 1 \right) \pi^\infty(\cdot,\cdot,a) da \right) \right]_{\Cc_\beta(D_N)}  \nonumber\\
	&\leq \sup_{a\in U} \| g^n(\cdot,\cdot,a) - g^\infty(\cdot,\cdot,a) \|_{L^{\infty}(D_{N})} \notag \\
	&\quad + A_0 \sup_{a\in U}\Big( \| \exp[g^n(\cdot,\cdot,a)-g^\infty(\cdot,\cdot,a)] - 1 \|_{L^\infty(D_N)} [\pi^\infty(\cdot,\cdot,a)]_{\Cc_\beta(D_N)} \notag \\
	&\quad \left. + \| \pi^\infty(\cdot,\cdot,a) \|_{L^\infty(D_N)} [\exp[g^n(\cdot,\cdot,a)-g^\infty(\cdot,\cdot,a)] - 1]_{\Cc_\beta(D_N)} \Big)  \right. \nonumber\\
	&\leq A_0 \sup_{a\in U} \| g^n(\cdot,\cdot,a) - g^\infty(\cdot,\cdot,a) \|_{\Cc_{\beta}(D_{N})} \nonumber\\
	&\leq A_0 \left[ d^{1-\frac{\beta}{\alpha}}(m^n, m^{\infty}) + \| D_x(w^n - w^\infty) \|_{\Cc_{\beta}(D_{N})} \right].
\end{align}
Note that
\begin{equation*}
    \exp[g^n(t,x,a)] - \exp [g^\infty(t,x,a)] = \exp[g^\infty(t,x,a)] \left( \exp[g^n(t,x,a)-g^\infty(t,x,a)] - 1 \right).
\end{equation*}
Applying \eqref{eq:lm.C12est0} to the above equality, together with \eqref{eq:lm.C12est0'} and \eqref{eq:lm.C12est11}, we deduce that
\bee
\begin{aligned}
 \left[e^{g^n(\cdot,\cdot,a)}- e^{g^\infty(\cdot,\cdot,a)} \right]_{\Cc_{\beta}(D_{N})}&\leq A_0  \sup_{a\in U} \bigg([e^{g^\infty(\cdot,\cdot,a)}]_{\Cc_\beta (D_{N})} \|(g^n-g^\infty) (\cdot,\cdot, a)\|_{L^\infty(D_{N})} \\
&\qquad\qquad \quad  +  \|e^{g^\infty(\cdot,\cdot,a)}\|_{L^\infty(D_{N})} [(g^n-g^\infty)(\cdot,\cdot,a)]_{\Cc_\beta(D_{N})} \bigg),
\end{aligned}
\eee
which further implies that
\be\label{eq:lm.C12est1}
\|\exp[g^n(\cdot,\cdot,a)]- \exp [g^\infty(\cdot,\cdot,a)] \|_{\Cc_\beta(D_{N})}\leq A_0 \left( d^{1-\frac{\beta}{\alpha}}(m^n,m^{\infty}) +\| D_x(w^n-w^\infty)\|_{\Cc_{\beta}(D_{N})} \right).
\ee
Note that
$$
\begin{aligned}
	&\pi^n(t,x,a)-\pi^\infty(t,x,a)\\
	&= \frac{  \exp[g^n(t,x,a)]\int_U \exp[g^\infty(t,x,a')]da' -   \exp[g^\infty(t,x,a)]\int_U \exp[g^n(t,x,a')]da' }{   \int_U \exp[g^n(t,x,a')]da'\int_U \exp[g^\infty(t,x,a')]da' } \\
	&=\int_U \left(e^{g^\infty(t,x,a')}-e^{g^n(t,x, a')}\right)da' \frac{ \pi^n(t,x,a) }{\int_U \exp[g^\infty(t,x,a')]da'}  +   \left(e^{g^n(t,x,a)}-e^{g^\infty(t,x, a)}\right) \frac{ 1 }{\int_U e^{g^\infty(t,x,a')}da'}.
\end{aligned}
$$
Applying \eqref{eq:lm.C12est0} to the two terms in the last line above, together with  \eqref{eq:lm.C12est0'}, \eqref{eq:lm.C12est11} and \eqref{eq:lm.C12est1}, we obtain
\be\label{eq:lm.C12est3}
\sup_{a\in U} \|\pi^n(\cdot,\cdot,a)-\pi^\infty(\cdot,\cdot,a)\|_{\Cc_{\beta}(D_{N})}\leq A_0 \left( d^{1-\frac{\beta}{\alpha}}(m^n,m^{\infty}) +\| D_x(w^n-w^\infty)\|_{\Cc_{\beta}(D_{N})} \right).
\ee
Then combining \eqref{eq:lm.C12est3}  with Assumption \ref{assume.r} and \eqref{eq:lm.C12est0}--\eqref{es:rn} shows that
\begin{align}\label{eq:lm.C12est5}
    &\sup_{t \in [0,T]}\|\tilde{r}^{n}-\tilde{r}^{\infty}\|_{\Cc_{\beta}(D_{N_0}(t,0))} + \sup_{t \in [0,T]}\|\partial_t\tilde{r}^{n}-\partial_t\tilde{r}^{\infty}\|_{\Cc_{\beta}(D_{N_0}(t,0))} + \|\tilde{b}^{n}-\tilde{b}^{\infty}\|_{\Cc_{\beta}(D_{N})} \notag \\
    &\leq A_0 \bigg[ \sup_{a\in U,t\in[0,T]}\|r^n_t(\cdot,\cdot,a)-r_t^{\infty}(\cdot,\cdot,a)\|_{\Cc_{\beta}(D_{N_0}(t,0))} + \sup_{a\in U,t\in[0,T]}\|\partial_t r^n_t(\cdot,\cdot,a)-\partial_t r_t^{\infty}(\cdot,\cdot,a)\|_{\Cc_{\beta}(D_{N_0}(t,0))} \notag \\
    &\qquad \quad + \sup_{a\in U}\|b^n(\cdot,\cdot,a)-b^{\infty}(\cdot,\cdot,a)\|_{\Cc_{\beta}(D_{N})} + \sup_{a\in U} \|\pi^n(\cdot,\cdot,a)-\pi^\infty(\cdot,\cdot,a)\|_{\Cc_{\beta}(D_{N})} \bigg] \notag \\
    &\leq A_0 \left[ d^{1-\frac{\beta}{\alpha}}(m^n,m^{\infty}) + \|D_x(w^n-w^\infty)\|_{\Cc_{\beta}(D_{N})} \right].
\end{align}
Meanwhile, it holds that
$$
\begin{aligned}
\Hc(\pi^n(t,x))-\Hc(\pi^\infty(t,x))=&\int_U [g^\infty(t,x,a)-g^n(t,x,a)] \pi^n(t,x,a)da \\&+\int_U g^\infty(t,x,a) [\pi^\infty(t,x,a)-\pi^n(t,x,a)]da
+\ln\left( \frac{\int_U \exp[g^n(t,x,a)] da} {\int_U \exp[g^\infty(t,x,a)]da} \right).
\end{aligned}
$$
Applying \eqref{eq:lm.C12est0'} and \eqref{eq:lm.C12est0} to the first two integrals on the right-hand side of the equality above, together with \eqref{eq:lm.C12est0''}, \eqref{eq:lm.C12est7'} and \eqref{eq:lm.C12est3}, we obtain
$$
\|\Hc(\pi^n)-\Hc(\pi^\infty)\|_{\Cc_\beta(D_{N})}\leq A_0\left[ d^{1-\frac{\beta}{\alpha}}(m^n,m^{\infty}) +\| D_x(w^n-w^\infty)\|_{\Cc_{\beta}(D_{N})} \right],
$$
 which yields
 \begin{align}\label{eq:lm.C12est7}
    &\sup_{t\in[0,T]}\|\lambda\delta(\cdot-t) (\mathcal{H}(\pi^n)-\mathcal{H}(\pi^\infty))\|_{\Cc_{\beta}(D_{N}(t,0))} + \sup_{t\in[0,T]}\|\lambda\delta'(\cdot-t) (\mathcal{H}(\pi^n)-\mathcal{H}(\pi^\infty))\|_{\Cc_{\beta}(D_{N}(t,0))} \notag\\
    &\leq A_0 \left[ d^{1-\frac{\beta}{\alpha}}(m^n,m^{\infty}) + \|D_x(w^n-w^\infty)\|_{\Cc_{\beta}(D_{N})} \right].
\end{align}
Finally, \eqref{eq:lemma3.4} follows immediately from the combination of \eqref{es:lm3.4_1}, \eqref{es:lm3.4_2}, \eqref{es:lm3.4_22}, \eqref{eq:lm.C12est5}, and \eqref{eq:lm.C12est7}.
\end{proof}

{We next step away from the context of $\pi^n$ and present a standard stability result for hitting times of diffusions. Since we were unable to locate a convenient reference for the precise form needed here, we include a self-contained proof for completeness.

\begin{Lemma}\label{lm:hitting.uniform}
Take $R, T\in(0,\infty)$. For $n\in\mathbb{N}\cup\{\infty\}$ and $(s,x)\in [0,\infty)\times B_{R}(0)$, consider
\begin{align*}
	dX^n_l = b^n(l, X^n_l )dl + \sigma^n(l,X^n_l)dW_l, \;\; l\in[s,\infty), \quad X^n_s = x,
\end{align*}
where $b^n:[0,\infty)\times\mathbb{R}^d\to\mathbb{R}^d$ and $\sigma^n:[0, \infty)\times\mathbb{R}^d\to\mathbb{R}^{d\times m}$ are uniformly bounded in $n$ and satisfy the following conditions:
\bi
\item $|b^n(t,x)-b^n(t,y)| + |\sigma^n(t,x)-\sigma^n(t,y)| \leq L |x-y|$ for a constant $L>0$ independent of $n$;
\item $\sigma^n$ is uniformly elliptic, i.e.,
$
\xi^\top (\sigma^n (\sigma^n)^\top)(t,x)\,\xi \geq \tilde\eta |\xi|^2,
$
for all $\xi\in\mathbb{R}^d$, for a constant $\tilde \eta>0$ independent of $n$;
\item
$
\lim_{n\to\infty} \left( \|b^n-b^\infty\|_{C^0([0,T]\times B_N(0))}+\|\sigma^n-\sigma^\infty\|_{C^0([0,T]\times B_N(0))}
\right) = 0,
$ for all $N\in \N$.
\ei
Let $\rho^n:= \inf\{l \geq s : X^{n}_l \notin B_{R}(0)\}$ be the first exit time of the process $X^n$ from the ball $B_R(0)$. Then, for any $\epsilon>0$,
\begin{equation}\label{eq:lm:hitting.unif}
	\lim_{n\to\infty} \sup_{(s,x) \in [0,T]\times B_R(0)} \mathbb{P}_{s,x} \left( |\rho^n \wedge T - \rho^\infty \wedge T | > \epsilon \right) = 0.
\end{equation}
\end{Lemma}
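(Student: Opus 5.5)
We couple $X^n$ and $X^\infty$ on the same probability space, driven by the same Brownian motion and started from the same point $(s,x)$. The argument then combines two ingredients. The first is $L^2$-closeness of the paths, which is uniform in $(s,x)$. The second is a non-degeneracy property of the limit exit time: because of uniform ellipticity, once $X^\infty$ comes close to $\partial B_R(0)$ it leaves $B_R(0)$ (or at least a slightly enlarged ball) within a short time with high probability, and it must have passed close to the boundary before it exits. Combining the two gives control of $|\rho^n\wedge T-\rho^\infty\wedge T|$ in both directions.

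\textbf{Step 1 (path stability).} Introduce the localizing time $\tau_N:=\inf\{l\ge s: |X^n_l|\vee|X^\infty_l|\ge N\}$. Using the Lipschitz bound with constant $L$, BDG and Gronwall, one gets
\[
\mathbb{E}_{s,x}\Big[\sup_{l\in[s,T\wedge\tau_N]}|X^n_l-X^\infty_l|^2\Big]\le C_T\big(\|b^n-b^\infty\|^2_{C^0([0,T]\times B_N)}+\|\sigma^n-\sigma^\infty\|^2_{C^0([0,T]\times B_N)}\big).
\]
Boundedness of the coefficients gives $\sup_{(s,x)}\mathbb{P}_{s,x}(\tau_N<T)\le C/N^2$ uniformly in $n$. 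Therefore $\sup_{(s,x)}\mathbb{P}_{s,x}(\sup_{[s,T]}|X^n-X^\infty|>\theta)\to0$ for every $\theta>0$.

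\textbf{Step 2 (uniform non-degeneracy of the limit).} Write $\rho^\infty_{r}$ for the exit time of $X^\infty$ from $B_r(0)$. The key claim is that for every $\epsilon>0$ and $\kappa>0$ there is a $\theta>0$ such that
\[
\sup_{(s,x)\in[0,T]\times B_R}\mathbb{P}_{s,x}\big(\rho^\infty_{R+\theta}\wedge T-\rho^\infty_{R-\theta}\wedge T>\epsilon\big)<\kappa ,
\]
with the convention $\rho^\infty_{R-\theta}=s$ if $|x|\ge R-\theta$. To prove it, apply the strong Markov property at $\rho^\infty_{R-\theta}$. It then suffices to show that, starting from any point $y$ with $|y|=R-\theta$ (or any $y\in B_R$ with $|y|\ge R-\theta$), the process exits $B_{R+\theta}$ within time $\epsilon$ with probability at least $1-\kappa$, uniformly in the starting time and point. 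Exiting means increasing the radial coordinate by at most $2\theta$. Consider the one-dimensional semimartingale $\langle X^\infty_l-y, e\rangle$ with $e=y/|y|$. Its quadratic variation rate is at least $\tilde\eta$ and its drift is bounded by some $\bar b$. Time-changing the martingale part into a Brownian motion $\beta$, the event of not reaching level $2\theta$ before time $\epsilon$ is contained in
\[
\Big\{\sup_{u\le\tilde\eta\epsilon}\beta_u< 2\theta+\bar b\epsilon\Big\}.
\]
This event has probability $\le C(2\theta+\bar b\epsilon)/\sqrt{\tilde\eta\epsilon}$. One first takes $\epsilon$ small to absorb the drift (for larger $\epsilon$ it suffices to prove the claim for $\epsilon'=\epsilon\wedge\epsilon_0$) and then takes $\theta$ small. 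All constants depend only on the uniform bounds, so the estimate is uniform in $(s,x)$.

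\textbf{Step 3 (conclusion).} On the event $\{\sup_{[s,T]}|X^n-X^\infty|\le\theta\}$ we have the sandwich $\rho^\infty_{R-\theta}\wedge T\le\rho^n\wedge T\le\rho^\infty_{R+\theta}\wedge T$. The same sandwich holds with $\rho^\infty$ in place of $\rho^n$. Hence
\[
|\rho^n\wedge T-\rho^\infty\wedge T|\le \rho^\infty_{R+\theta}\wedge T-\rho^\infty_{R-\theta}\wedge T .
\]
Fix $\epsilon$ and $\kappa$, choose $\theta$ as in Step 2, and then choose $n$ large enough (by Step 1) that the probability of the complementary event is below $\kappa$ uniformly in $(s,x)$. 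This gives \eqref{eq:lm:hitting.unif}.

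The main obstacle is Step 2: we need a quantitative, uniform-in-starting-point estimate showing that the limit diffusion cannot linger in the thin shell $\{R-\theta\le|x|\le R+\theta\}$. It is also delicate that the bound must work from every point of the shell and not just from its inner boundary. The one-dimensional projection together with the time-change comparison is the first thing I would try. If drift control becomes awkward, the fallback is a Girsanov removal of the bounded drift over the short interval $[0,\epsilon]$.
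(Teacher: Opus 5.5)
Your proposal is correct and follows essentially the same route as the paper: uniform $L^2$ path stability via localization, BDG and Gronwall, then a strong Markov reduction, projection onto the radial direction $y/|y|$, a Dambis--Dubins--Schwarz time change and uniform ellipticity to show that $X^\infty$ cannot linger near $\partial B_R(0)$. The differences are only in packaging: your shell sandwich $\rho^\infty_{R-\theta}\wedge T\le\rho^n\wedge T\le\rho^\infty_{R+\theta}\wedge T$ handles both directions at once, applying strong Markov only at stopping times of $X^\infty$, whereas the paper treats $\{\tau^n>\tau^\infty+\epsilon\}$ explicitly and the reverse event ``in a similar fashion''; and you absorb the drift by shrinking $\epsilon$ and using the reflection principle, where the paper uses the explicit law of the running maximum of Brownian motion with drift.
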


\begin{proof}
We first show that
\begin{equation}\label{eq:lm.hittinguniform0}
	\sup_{(s,x) \in [0,T]\times B_R(0)}\mathbb{E}_{s,x}\left[ \sup_{l \in [s,T]} \left|X^{n}_l - X^{\infty}_l\right|^2\right] \to 0, \quad \text{as } n \to \infty.
\end{equation}
For $M>R$, define
$$
\tau_M^n:=\inf\{l\geq s: |X_l^n|\geq M\},\quad
\tau_M^\infty:=\inf\{l\geq s: |X_l^\infty|\geq M\},\quad
 T_M^n:=\tau_M^n\wedge \tau_M^\infty\wedge T.
$$
Let
$
\Delta_l^n:=X_l^n-X_l^\infty$ and $\Delta_n := \sup_{l\in[s,T]} |X^n_l - X^\infty_l|.$
For $l\in[s,T]$, we have that
$$
\begin{aligned}
\Delta_{l\wedge T_M^n}^n&=\int_s^{l\wedge T_M^n}\big(b^n(u,X_u^n)-b^\infty(u,X_u^\infty)\big)du +
	\int_s^{l\wedge T_M^n}\big(\sigma^n(u,X_u^n)-\sigma^\infty(u,X_u^\infty)\big)dW_u .
\end{aligned}
$$
Since $|X_u^n|,|X_u^\infty|\leq M$ for $u\leq T_M^n$, we obtain
$$
\begin{aligned}
	|b^n(u,X_u^n)-b^\infty(u,X_u^\infty)|&\leq L|\Delta_u^n|+\|b^n-b^\infty\|_{C^0([0,T]\times B_M(0))},\\
	|\sigma^n(u,X_u^n)-\sigma^\infty(u,X_u^\infty)|&\leq L|\Delta_u^n|+\|\sigma^n-\sigma^\infty\|_{C^0([0,T]\times B_M(0))}.
\end{aligned}
$$
Therefore, by Cauchy--Schwarz and the BDG inequality,
$$
\begin{aligned}
	\E_{s,x}  \left[  \sup_{s\leq l\leq t}	|\Delta_{l\wedge T_M^n}^n|^2	\right]
	&\leq  C\int_s^t  \E_{s,x}\left[  \sup_{s\leq q\leq u} |\Delta_{q\wedge T_M^n}^n|^2  \right]du  \\
	&\quad+C\left(  \|b^n-b^\infty\|_{C^0([0,T]\times B_M(0))}^2  +  \|\sigma^n-\sigma^\infty\|_{C^0([0,T]\times B_M(0))}^2  \right),
\end{aligned}
$$
where the constant $C$ is independent of
$n,s,x$ and $M$. Then Gronwall's inequality yields that
$$
\sup_{(s,x)\in[0,T]\times B_R(0)} \E_{s,x}
\left[ \sup_{s\leq l\leq T} |\Delta_{l\wedge T_M^n}^n|^2 \right]
\leq  C' \left( \|b^n-b^\infty\|_{C^0([0,T]\times B_M(0))}^2 + \|\sigma^n-\sigma^\infty\|_{C^0([0,T]\times B_M(0))}^2 \right).
$$
We then arrive at
\be\label{eq:lm.hittinguniform1}
\lim_{n\to\infty} \sup_{(s,x)\in[0,T]\times B_R(0)} \E_{s,x} \left[ \sup_{s\leq l\leq T} |\Delta_{l\wedge T_M^n}^n|^2 \right] =0,\quad \forall M\in(R,\infty).
\ee
Since the coefficients are uniformly bounded, there exists a constant $A>0$, independent of $n$ and $(s,x)\in[0,T]\times B_R(0)$, such that
$
\E_{s,x}\left[ \sup_{s\leq l\leq T}|X_l^n-x|^4\right] \leq A .
$
Thus, it holds that
$$
\sup_{(s,x)\in[0,T]\times B_R(0)} \P_{s,x}(\tau_M^n<T)  \leq  \frac{A}{(M-R)^4},\quad \forall n\in \N\cup\{\infty\}.
$$
Again, the uniform boundedness of the coefficients implies that
$$
\sup_{n\in \N}\sup_{(s,x)\in[0,T]\times B_R(0)} \E_{s,x} \left[ \sup_{s\leq l\leq T}|X_l^n-X_l^\infty|^4  \right]<\infty.
$$
Then, Cauchy--Schwarz inequality gives that
$$
\begin{aligned}
\E_{s,x}	\left[ \sup_{s\leq l\leq T}|\Delta_l^n|^2	\right] &\leq2\E_{s,x}\left[\sup_{s\leq l\leq T}|\Delta_{l\wedge T_M^n}^n|^2\right] + 2\E_{s,x} \left[ \sup_{s\leq l\leq T}|\Delta_l^n|^2	\mathbf 1_{\{\tau_M^n\wedge \tau_M^\infty<T\}} \right] \\
&\leq 2\E_{s,x} \left[ \sup_{s\leq l\leq T}	|\Delta_{l\wedge T_M^n}^n|^2	\right]	+ \frac{A}{(M-R)^2}.
\end{aligned}
$$
This, together with \eqref{eq:lm.hittinguniform1}, gives
$$
\begin{aligned}
\limsup_{n\to\infty} \sup_{(s,x)\in[0,T]\times B_R(0)} \E_{s,x}	\left[ \sup_{s\leq l\leq T}|\Delta_l^n|^2	\right] \leq \frac{A}{(M-R)^2},
\end{aligned}
$$
and \eqref{eq:lm.hittinguniform0} is verified by sending $M\to\infty$.

Now we show the desired result. It follows from \eqref{eq:lm.hittinguniform0} that
\be\label{eq:lm.hittinguniform0'}
\lim_{n\to\infty}\sup_{(s,x) \in [0,T]\times B_R(0)}\mathbb{P}_{s,x}(\Delta_n > \delta) = 0,\; \text{ for any $\delta > 0$.}
\ee
Let $\tau^n:=\rho^n\wedge T$ for $n\in \N\cup\{\infty\}$. We decompose the target probability as
\begin{align*}
	\mathbb{P}_{s,x} ( |\tau^n - \tau^\infty| > \epsilon ) \leq \mathbb{P}_{s,x} (\Delta_n > \delta) + \mathbb{P}_{s,x}(\tau^n > \tau^\infty + \epsilon, \, \Delta_n \leq \delta) + \mathbb{P}_{s,x}(\tau^n < \tau^\infty - \epsilon, \, \Delta_n \leq \delta).
\end{align*}
The goal is to show that
\be\label{eq:lm.hittinguniform7}
	\lim_{\delta\to 0+}\sup_{n \in \mathbb{N}}\sup_{(s,x) \in [0,T]\times B_R(0)}	\mathbb{P}_{s,x}(\tau^n > \tau^\infty + \epsilon, \, \Delta_n \leq \delta)=0.
\ee
On $\{\tau^n > \tau^\infty + \epsilon, \Delta_n \leq \delta\}$, the path of $X^\infty$ hits the boundary at $\tau^\infty$ and remains within distance $\delta$ of the exterior for duration $\epsilon$. The strong Markov property gives that
\begin{align*}
\sup_{(s,x) \in [0,T]\times B_R(0)}	\mathbb{P}_{s,x}(\tau^n > \tau^\infty + \epsilon, \, \Delta_n \leq \delta)&\leq \sup_{(r,y)\in [0, T]\times \partial B_R (0)} \mathbb{P}_{r,y}\left(\sup_{r\leq l \leq r+ \epsilon} |X^\infty_l| \leq R + \delta\right).
\end{align*}
To verify \eqref{eq:lm.hittinguniform7}, it suffices to show that
\be  \label{eq:lm.hittinguniform3}
\lim_{\delta\to 0+}  \sup_{(r,y)\in [0, T] \times \partial B_R (0)} \mathbb{P}_{r,y}\left(\sup_{r\leq l \leq r+ \epsilon} |X^\infty_l| \leq R + \delta\right)=0.
\ee
Fix an arbitrary $(r,y)\in [0,\infty) \times \partial B_{R}(0)$. Set $K:=\|b\|_{L^\infty(\T\times \R^d)}<\infty$. Define $Y_l := X^\infty_{r+l} \text{ for }0\leq l\leq \eps$.
Let us introduce a one-dimensional process
	$$
	Z_l := \frac{y}{|y|} \cdot (Y_l - y) \;\text{ for }0\leq l\leq \eps.
	$$
	We next relate $Z_l$ to $|Y_l|$. Note that
	$
	Z_l = \frac{y}{|y|} \cdot Y_l - \frac{y}{|y|} \cdot y = \frac{y}{|y|} \cdot Y_l - |y|.
	$
	Since $|\frac{y}{|y|}|=1$, $\frac{y}{|y|} \cdot Y_l \leq |Y_l|$, and hence $Z_l \leq |Y_l| - R$, it holds that
	$$
	\left\{ \sup_{0\leq l \leq \eps} |Y_l| \leq R+\delta \right\}\subseteq\left\{ \sup_{0\leq l \leq \eps} Z_l \leq \delta \right\}.
	$$
	Next, we compute the dynamics of $Z_l$. From the dynamics of $Y_l$, we obtain that
	$$
	Z_l=\int_0^l \frac{y}{|y|} \cdot b^\infty(r+u,Y_u)du+\int_0^l \frac{y}{|y|} \cdot \sigma^\infty(r+u,Y_u)dW_{r+u}.
	$$
	Define the martingale term
	$M_l := \int_0^l \frac{y}{|y|} \cdot \sigma^\infty(r+u,Y_u)dW_{r+u}.$
	Since
	$
	\int_0^l \frac{y}{|y|} \cdot b^\infty(r+u,Y_u)du \geq -Kl,
	$
	we deduce that $Z_l \geq M_l - K l$, which implies
	$
	\left\{ \sup_{0\leq l\leq \eps} Z_l \leq \delta \right\}\subseteq\left\{ \sup_{0\leq l\leq \eps} (M_l - Kl) \leq \delta \right\}.
	$
	That is,
	\be\label{eq:lm.hittinguniform3'}
	\left\{ \sup_{0\leq l\leq \eps} |Y_l| \leq R+\delta \right\}\subseteq\left\{ \sup_{0\leq l \leq \eps} (M_l - Kl) \leq \delta \right\}.
	\ee
The quadratic variation of $M$ is given by $\langle M \rangle_l=\int_0^l \frac{y^\top}{|y|} \sigma^\infty(\sigma^\infty)^\top(r+u,Y_u)\frac{y}{|y|}du.$
	We then conclude from the uniform ellipticity assumption that $\langle M \rangle_l \geq\tilde \eta l$. Then by the Dambis--Dubins--Schwarz theorem (see, e.g., \cite[Theorem 3.4.6]{karatzas1991brownian}), there exists a one-dimensional standard Brownian motion $B$ such that
	$
	M_l = B_{\langle M \rangle_l}.
	$
	Define the right-continuous inverse of $\langle M \rangle$ as $\theta_u := \inf\{ l \geq 0 : \langle M \rangle_l > u\}$. It follows from $\langle M \rangle_l \geq\tilde \eta l$ that
\be\label{eq:lm.hittinguniform5}
	\theta_u \leq \frac{u}{\tilde \eta} \quad \forall u \in [0,\tilde \eta\eps].
\ee
	Now, if $\sup_{0\leq l\leq \eps} (M_l - Kl) \leq \delta$,
	then for every $u \in [0,\tilde \eta\eps]$, $B_u - K \theta_u \leq \delta$. We then conclude from \eqref{eq:lm.hittinguniform5} that $B_u - \frac{K}{\tilde \eta}u \leq B_u - K \theta_u \leq \delta$, and hence
	$$
	\left\{   \sup_{0\leq l\leq \eps} (M_l - Kl) \leq \delta  \right\}
	\subseteq  \left\{ \sup_{0\leq u\leq \tilde \eta\eps}
	\left( B_u - \frac{K}{\tilde \eta}u \right) \leq \delta   \right\}.
	$$
Together with \eqref{eq:lm.hittinguniform3'}, this gives
	\be\label{eq:lm.hittinguniform5'}
	 \sup_{(r,y)\in [0, T] \times \partial B_R (0)}  \P_{r,y}  \left( \sup_{0\leq l\leq \eps} |X^\infty_{r+l}| \leq R+\delta \right)
	\leq \P \left( \sup_{0\leq u\leq \tilde \eta\eps} \left( B_u - \frac{K}{\tilde \eta}u \right) \leq \delta \right).
	\ee
Note that, for the Brownian motion with drift $-\frac{K}{\tilde \eta}$, we have for any $a\in (0,\infty)$ that
	$$
	\begin{aligned}
	\P  \left(\sup_{0\leq u\leq a} (B_u - \frac{K}{\tilde \eta}u) \leq \delta\right)
&=  \Phi\!\left( \frac{\delta + \frac{K}{\tilde \eta}a}{\sqrt{a}} \right)  -  e^{-2\frac{K}{\tilde \eta}\delta}  \Phi\!\left( \frac{\frac{K}{\tilde \eta}a - \delta}{\sqrt{a}} \right)
 \\&\to \Phi(\frac{K}{\tilde \eta}\sqrt{a}) - \Phi(\frac{K}{\tilde \eta}\sqrt{a})=0, \;  \text{ as $\delta\to 0+$},
	\end{aligned}
	$$
	where $\Phi$ denotes the standard normal distribution function.
	By taking $a = \tilde \eta \eps$ and combining with \eqref{eq:lm.hittinguniform5'}, we achieve \eqref{eq:lm.hittinguniform3}, and consequently \eqref{eq:lm.hittinguniform7} holds.

In a similar fashion, we also have that
$$
	\lim_{\delta\to 0+}\sup_{n \in \mathbb{N}}\sup_{(s,x) \in [0,T]\times B_R(0)}	\mathbb{P}_{s,x}(\tau^n < \tau^\infty - \epsilon, \, \Delta_n \leq \delta)=0.
$$
The desired result follows from combining the above result with \eqref{eq:lm.hittinguniform0'} and \eqref{eq:lm.hittinguniform7}.
\end{proof}
}

With the previous preparations, we are ready to prove the next key lemma.

\begin{Lemma}\label{lm:Phi.continuous}
	Fix $0<\lambda\leq \lambda_0$. For any $0\leq\beta<\alpha$, the following holds:
	\begin{itemize}
		\item[(i)] $\mathcal{E}_{\lambda} := \mathcal{M}_\lambda \times \mathscr{M}_{\nu}^{\kappa,C^*}$ is a compact subset of the product space $\Cc^{0,2, \wg}_{\beta}([0,T]\times\R^d)\times \mathscr{M}$;
		\item[(ii)] $\Phi_\lambda$ is a continuous mapping from $\mathcal{E}_\lambda$ into $\mathcal{E}_\lambda$, equipped with the product topology induced by the norm $\|\cdot\|_{\Cc^{0,2,\wg}_{\beta}([0,T]\times\R^d)}$ and the metric $d(\cdot, \cdot)$.
	\end{itemize}
\end{Lemma}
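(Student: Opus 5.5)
For part (i), I would handle the two factors separately and then take the product. The factor $\mathscr{M}_\nu^{\kappa,C^*}$ is compact in $(\mathscr{M},d)$, as already noted after its definition via Arzel\`a--Ascoli. For $\mathcal{M}_\lambda$, a sequence $(w^n)$ with $\|w^n\|_{\Cc^{0,2}_\alpha([0,T]\times\R^d)}\le A_\lambda$ is bounded in $\Cc^{0,2}_\alpha(\overline{D_N})$ on every bounded cylinder $D_N$. On a bounded domain, the embedding $\Cc^{0,2}_\alpha\hookrightarrow\Cc^{0,2}_\beta$ is compact for $\beta<\alpha$; this follows from Arzel\`a--Ascoli plus the interpolation $[f]_{\beta}\le C[f]_\alpha^{\beta/\alpha}\|f\|_\infty^{1-\beta/\alpha}$. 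A diagonal argument over $N\in\N$ then produces a subsequence converging in every $\Cc^{0,2}_\beta(D_N)$, and hence in the weighted norm $\|\cdot\|_{\Cc^{0,2,\wg}_\beta}$. The weights $2^{-N}$ together with the uniform bound $A_\lambda$ make the tail of the series uniformly small. Closedness follows because the bounds $\|w\|_{\Cc^{0,1}_\alpha}\le A^*$ and $\|w\|_{\Cc^{0,2}_\alpha}\le A_\lambda$ are lower semicontinuous under locally uniform convergence of $w, D_xw, D_x^2w$. Difference quotients pass to the limit pointwise and suprema over larger sets are then taken.

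For part (ii), the self-map property is immediate from Lemma \ref{lm:iteration.estimateentropy}. If $(w,m)\in\mathcal{E}_\lambda$, then $v^{\pi,m}=J^{\pi,m}_\lambda$ satisfies $\|J\|_{\Cc^{0,1}_\alpha}\le A^*$ and $\|J\|_{\Cc^{0,2}_\alpha}\le A_\lambda$, and $\mu^{\pi,m}\in\mathscr{M}_\nu^{\kappa,C^*}$.

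For continuity, I would take $(w^n,m^n)\to(w^\infty,m^\infty)$ in $\mathcal{E}_\lambda$ and show $\Phi_\lambda(w^n,m^n)\to\Phi_\lambda(w^\infty,m^\infty)$. Since $\mathcal{E}_\lambda$ is compact, it suffices to show that every convergent subsequence of the images has the right limit.

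\emph{Value component.} Let $V^n:=V^{\pi^n,m^n}_\lambda$. For each fixed $t$, the difference $V^n(t,\cdot,\cdot)-V^\infty(t,\cdot,\cdot)$ solves a linear parabolic equation with operator $\mathcal{L}^\infty$. Its source consists of $(\mathcal{L}^n-\mathcal{L}^\infty)V^n$ together with the differences of $\tilde r$, $\lambda\delta\mathcal{H}$ and $F$. By Lemma \ref{lm:Phi.continuous0}, these differences tend to zero in $\Cc_\beta(D_N(t,0))$ uniformly in $t$; the term $(\mathcal{L}^n-\mathcal{L}^\infty)V^n$ is controlled using $\|V^n\|_{\widetilde\Cc^{1,2}_\alpha}\le A_\lambda$. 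These are only local convergences, whereas the Schauder estimates are global, so I would localize. The plan is to represent the difference on $D_N$ via the stopped process at the exit time $\rho$ from $B_R$, which gives the formula
\[
V^n(t,s,x)=\E_{s,x}\Big[\int_s^{\rho^n\wedge T}(\cdots)\,dl+V^n(t,\rho^n\wedge T,X^n_{\rho^n\wedge T})\Big].
\]
Lemma \ref{lm:hitting.uniform} controls the exit times uniformly, and the uniform H\"older bounds on $V^n$ control the boundary values. This yields locally uniform convergence $V^n\to V^\infty$, uniformly in $t$. Applying the same argument to $W^n=\partial_tV^n$ gives the corresponding convergence for the time derivatives.

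Next, the uniform $\Cc^{1,2}_\alpha$ and $\widetilde\Cc$ bounds, combined with interpolation, upgrade this to convergence in $\Cc^{0,2}_\beta(D_N)$ for the diagonal $J^n(t,x)=V^n(t,t,x)$, arguing as in \eqref{est:dig2}. Alternatively, compactness plus uniqueness of the limit identified by the PDE gives the same conclusion.

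\emph{Measure component.} Let $\mu^n=\mu^{\pi^n,m^n}$. The coefficients $\tilde b^n$ and $\sigma^n$ converge locally uniformly by Lemma \ref{lm:Phi.continuous0}. They are Lipschitz in $x$ uniformly in $n$, because $D_xw^n$ is Lipschitz with constant $A_\lambda$ and Assumption \ref{assume.r} holds. The synchronous-coupling estimate \eqref{eq:lm.hittinguniform0} from the proof of Lemma \ref{lm:hitting.uniform} then applies with random initial data $\xi\sim\nu$; the truncation in $M$ absorbs the tails of $\xi$ via its $(2+\kappa)$-moment. This gives $\sup_t\E|X^n_t-X^\infty_t|^2\to0$, and hence $d(\mu^n,\mu^\infty)\to0$ by \eqref{ineq:w2}.

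I expect the main obstacle to be the value component. The localization must pass from local coefficient convergence to $\Cc^{0,2}_\beta$ convergence on each $D_N$, uniformly in the initial time $t$, and the treatment of the exit times and boundary terms is exactly where Lemma \ref{lm:hitting.uniform} is needed.
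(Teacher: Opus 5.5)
Your proposal is correct, and most of it follows the paper. That includes Part (i) via compact embedding on each $D_N$ plus a diagonal argument, the self-map property from Lemma~\ref{lm:iteration.estimateentropy}, and locally uniform convergence of $V^n$ and $\partial_tV^n$, uniformly in $t$, via stopped representations, Lemma~\ref{lm:Phi.continuous0} and Lemma~\ref{lm:hitting.uniform}. It also includes synchronous coupling plus Gronwall for the measure component.

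The genuine difference is the last step of the value component. The paper upgrades $C^0$ convergence to $\Cc^{0,2}_\beta$ convergence by interior Schauder estimates \eqref{eq:lm.barvest} for the difference equation \eqref{eq:PDF.Vbar} and its $t$-derivative. That is why it needs $f^n\to0$ in $\Cc_\beta$, i.e.\ the H\"older-norm convergences of Lemma~\ref{lm:Phi.continuous0} and the $\Cc^2_\beta$ convergence of the terminal data. You instead interpolate, in three steps:
\begin{enumerate}
\item The bound $\|V^n\|_{\widetilde{\Cc}^{1,2}_{\alpha,[0,T]}}\le A_\lambda$ (uniform in $n$) and $C^0$ convergence on $D_{N+1}(t,0)$ give, by spatial interpolation on balls, sup-norm convergence of $D_x^kV^n$ and $D_x^k\partial_tV^n$, $k\le2$, on $D_N(t,0)$.
\item The inequality $[f]_\beta\le[f]_\alpha^{\beta/\alpha}(2\|f\|_\infty)^{1-\beta/\alpha}$ then gives the $\beta$-seminorms.
\item The diagonal bound \eqref{est:dig2}, with $\beta$ in place of $\alpha$, finishes.
\end{enumerate}
This is valid and uniform in $t$. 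It needs only sup-norm convergence of the coefficients, so the difference PDE you set up at the start is not actually used. The paper's route instead yields a quantitative Schauder bound on $\bar V^n$, at the price of the H\"older-norm bookkeeping.

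One point must be made explicit. In your exact stopped representation, on $\{\rho<T\}$ the boundary term compares $V^n$ and $V^\infty$ at the exit point. That difference is precisely the unknown, so the uniform H\"older bounds on $V^n$ do not control it. You need $R\gg N$, so that $\P_{s,x}(\rho<T)\le C/(R-N)^2$ uniformly for $|x|<N$ (bounded coefficients). On $\{\rho\ge T\}$ the boundary value is then the terminal datum $F$. This is exactly what the paper's truncated functional $V^n_{N_1}$, with $F$ evaluated at the stopped point, encodes.

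Once $R$ is chosen this way, you could even dispense with Lemma~\ref{lm:hitting.uniform}. On the event that neither $X^n$ nor $X^\infty$ leaves $B_R$ before $T$, no exit-time comparison is needed. There, local sup-norm convergence of the coefficients, H\"older continuity of the limit integrand and the $L^2$ coupling suffice, and the complement has uniformly small probability.
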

	\begin{proof}
{\bf Part (i).} It follows from a similar diagonal argument as in \cite[Lemma 3.5 (i)]{WYZZ2026} that $\mathcal{M}_\lambda$ is a compact subset of the space $\Cc^{0,2, \wg}_{\beta}([0,T]\times \R^d)$. Note that $\mathscr{M}_{\nu}^{\kappa,C^*}$ is a compact subset of $(\mathscr{M}, d)$ via the Arzel\`a-Ascoli lemma. As the product of two compact sets is compact in the product topology, the conclusion holds.

{\bf Part (ii).} Take $(w,m)\in \Ec_\lambda$, then Lemma \ref{lm:iteration.estimateentropy} gives that $\Phi_\lambda(w,m)\in \Ec_\lambda$. Thus, $\Phi_\lambda$ maps from $\Ec_\lambda$ to $\Ec_\lambda$.
We next show that $\Phi_\lambda$ is a continuous mapping from $\Ec_\lambda$ to $\Ec_\lambda$.

Take a sequence $(w^n,m^n)_{n\in \N\cup\{\infty\}}\subset \Ec_\lambda$ with $\lim\limits_{n\to\infty}\|w^n-w^\infty\|_{\Cc^{0,2,\wg}_{\beta}([0,T]\times \R^d)}=0$ and $\lim\limits_{n\to\infty}d(m^n,m^{\infty})=0$. Set $ \pi^n(t,x,a)=\Gamma_\lambda(t,x,D_xw^n(t,x),m^n_t,a), V^n:=V^{\pi^n,m^n}_{\lambda}$ for each $n\in \N\cup\{\infty\}$. By Lemma \ref{lm:iteration.estimateentropy}, it holds that
\begin{equation*}
		\begin{aligned}
			\partial_s V^{n}_\lambda (t,s, x) &+ \frac{1}{2}\operatorname{tr}\left((\sigma\sigma^T)(s,x, m^n_s) D^2_x V^{n}_\lambda (t,s, x)\right) + \tilde{b}(s,x, m^n_s, \pi^n(s,x))\cdot D_x V^{n}_\lambda(t,s, x) \\
			&+ \tilde{r}(s-t,x, m^n_s,\pi^n(s,x)) + \lambda \delta(s-t)\mathcal{H}(\pi^n(s,x))=0,\\
			V^{n}_\lambda (t,T, x) &= F(t,x,m^n_T).
		\end{aligned}
	\end{equation*}
Define $\bar V^n = V_\lambda^\infty - V_\lambda^n$ for $n\in \mathbb{N}$. Then $\bar V^n$ satisfies
\be\label{eq:PDF.Vbar}
\begin{aligned}
\partial_s \bar V^n(t,s,x) &+ \frac{1}{2}\operatorname{tr}\left((\sigma\sigma^T)(s,x, m^\infty_s) D^2_x \bar V^n(t,s,x)\right) \\
&+\tilde{b}(s,x, m^{\infty}_s, \pi^{\infty}(s,x))\cdot D_x \bar V^{n}(t,s,x) + f^n(t,s,x)=0,\\
\bar V^n(t,T,x) &= F(t,x,m^\infty_T) - F(t,x,m^n_T),
\end{aligned}
\ee
where
\begin{align}\label{eq:lmC12.fn}
f^n(t,s,x) := & \tilde{r}(s-t,x,m^{\infty}_s,\pi^{\infty}(s,x)) - \tilde{r}(s-t,x,m^{n}_s,\pi^{n}(s,x)) \notag\\
&+ \lambda \delta(s-t)(\mathcal{H}(\pi^\infty(s,x)) - \mathcal{H}(\pi^n(s,x))) \notag\\
&+ (\tilde{b}(s,x, m^\infty_s, \pi^\infty(s,x)) - \tilde{b}(s,x, m^n_s, \pi^n(s,x)))\cdot D_x V^{n}_\lambda(t,s, x) \notag\\
&+ \frac{1}{2}\operatorname{tr}\left(((\sigma\sigma^T)(s,x, m^\infty_s) - (\sigma\sigma^T)(s,x, m^n_s))D^2_x V^{n}_\lambda (t,s, x)\right).
\end{align}

Let $\bar{J}^n(t,x) := \bar{V}^n(t,t,x)$. We claim that $\|\bar J^n\|_{\Cc^{0,2,\wg}_{\beta}([0,T]\times \R^d)} \to 0$ as $n \to \infty$. First, note that for any $\eps > 0$, there exists $N_0$ such that $\sum_{N=N_0}^\infty \frac{A_\lambda}{2^N} \leq \frac{\eps}{4}$. Since $(w^n) \subset \Mc_\lambda$, the uniform estimate in Lemma \ref{lm:iteration.estimateentropy} implies
\begin{align*}
\sum_{N=N_0}^\infty \frac{1}{2^N}\|\bar J^n\|_{\Cc^{0,2}_{\beta}(D_N)}&\leq \sum_{N=N_0}^\infty \frac{1}{2^N} \left( \|J^n\|_{\Cc^{0,2}_{\beta}(D_N)}+  \|J^\infty\|_{\Cc^{0,2}_{\beta}(D_N)} \right)\\
&\leq \sum_{N=N_0}^\infty \frac{1}{2^N} \left( \|J^n\|_{\Cc^{0,2}_{\alpha}(D_N)}+  \|J^\infty\|_{\Cc^{0,2}_{\alpha}(D_N)} \right)
\leq 2\sum_{N=N_0}^\infty\frac{A_\lambda}{2^N}\leq \frac{\eps}{2}.
\end{align*}
That is, to prove the claim, it suffices to show
\be\label{eq:lm.continuous1}
\lim_{n\to \infty} \|\bar J^n\|_{\Cc^{0,2}_{\beta}(D_{N_0})} =0.
\ee
We first verify that
\begin{equation}\label{eq:lmC12.vbarbound}
\sup_{t\in[0,T]} \|V^n(t,\cdot,\cdot) - V^\infty(t,\cdot,\cdot)\|_{\Cc^0(D_{N_0+1}(t,0))} + \sup_{t\in[0,T]} \|\partial_t V^n(t,\cdot,\cdot) - \partial_t V^\infty(t,\cdot,\cdot)\|_{\Cc^0(D_{N_0+1}(t,0))} \to 0,
\end{equation}
as $n \to \infty$. For $N \in \mathbb{N} \cup \{\infty\}$, let $\rho^n_{s,N} := \inf\{l \geq s : X^{n}_l \notin B_{N}(0)\}$ be the first exit time of the process $X^n$ from the ball $B_N(0)$, where $X^n$ is governed by
\begin{equation*}
dX^n_l = \tilde{b}(l,X^n_l,m^n_l,\pi^n(l,X^n_l))dl + \sigma(l,X^n_l,m^n_l)dW_l, \quad X^n_s = x.
\end{equation*}
Let us define the truncated value functions $V^n_{N}$ for $N \in \mathbb{N} \cup \{\infty\}$ as
\begin{equation*}
V^n_{N}(t,s,x) := \mathbb{E}_{s,x} \left[ \int_s^{T \wedge \rho^{n}_{s,N}} \left( \tilde{r}(l-t, X^{n}_l,m^n_l,\pi^n(l,X^n_l)) + \delta(l-t)\lambda \mathcal{H}(\pi^n(l,X^{n}_l)) \right) dl + F(t,X^{n}_{T \wedge \rho^{n}_{s,N}},m^n_T) \right].
\end{equation*}
From the boundedness of the coefficients in Assumption \ref{assume.r} and the estimates in Lemma \ref{lm:pi.est}, it follows that
\begin{align*}
| V^n (t,s,x) - V^n_{N} (t,s,x) | &= \bigg| \mathbb{E}_{s,x} \left[ \int_{\rho^{n}_{s,N}}^{T} \left( \tilde{r} + \delta(l-t)\lambda \mathcal{H}(\pi^n) \right) dl \cdot \mathbf{1}_{\{\rho^{n}_{s,N} < T\}} \right] \\
&\quad + \mathbb{E}_{s,x} \left[ \left(F(t,X^{n}_{T},m^n_T) - F(t,X^{n}_{\rho^{n}_{s,N}},m^n_T)\right) \cdot \mathbf{1}_{\{\rho^{n}_{s,N} < T\}} \right] \bigg| \\
&\leq A_0 \mathbb{P}(\rho^{n}_{s,N} < T) \leq A_0 \mathbb{P} \left( \sup_{s \leq l \leq T} |X^{n}_l| > N \right) \leq \frac{A_0}{N^2} \mathbb{E}_{s,x}\left[\sup_{s \leq l \leq T} |X^{n}_l|^2\right].
\end{align*}
As $b$ and $\sigma$ are bounded, the second moment of the supremum of $X^n$ is uniformly bounded. Consequently, for any $\varepsilon > 0$, there exists a sufficiently large $N_1 \in \mathbb{N}$ such that
\begin{equation}\label{eq:lmvbarN.eps}
\sup_{t\in[0,T]}\|V^n(t,\cdot,\cdot) - V^{n}_{N_1}(t,\cdot,\cdot)\|_{\Cc^{0}(D_{N_0+1}(t,0))} \leq \varepsilon, \quad \forall n \in \mathbb{N} \cup \{\infty\}.
\end{equation}
It then holds that
\begin{align*}
&V^n_{N_1} (t,s,x) - V^\infty_{N_1} (t,s,x) \\
&= \mathbb{E}_{s,x}\bigg[\int_{s}^{\rho^{n}_{s,N_1}\wedge T} \left( [\tilde{r}^n-\tilde{r}^{\infty}](l-t, X^{n}_l) + \delta(l-t)\lambda [\mathcal{H}(\pi^n)-\mathcal{H}(\pi^\infty)](l,X^{n}_l) \right) dl \\
&\qquad \qquad + F(t,X^{n}_{T \wedge \rho^{n}_{s,N_1}},m^n_T) - F(t,X^{n}_{T \wedge \rho^{n}_{s,N_1}},m^\infty_T) \bigg] \tag{I} \\
&\quad + \mathbb{E}_{s,x}\bigg[\int_s^{\rho^{n}_{s,N_1}\wedge T} \left( \tilde{r}^{\infty}(l-t, X^{n}_l) + \delta(l-t)\lambda \mathcal{H}(\pi^\infty(l,X^{n}_l))\right) dl \\
&\qquad \qquad \; - \int_s^{\rho^{\infty}_{s,N_1}\wedge T} \left( \tilde{r}^{\infty}(l-t, X^{\infty}_l) + \delta(l-t)\lambda \mathcal{H}(\pi^\infty(l,X^{\infty}_l))\right) dl \\
&\qquad \qquad \; + F(t,X^{n}_{T \wedge \rho^{n}_{s,N_1}},m^\infty_T) - F(t,X^{\infty}_{T \wedge \rho^{\infty}_{s,N_1}},m^\infty_T) \bigg]. \tag{II}
\end{align*}
By Lemma \ref{lm:Phi.continuous0}, for term (I), we directly obtain that
\begin{equation*}
    \sup_{t\in[0,T]}\|\text{(I)}\|_{\Cc^{0}(D_{N_0+1}(t,0))} \to 0, \quad \text{as } n \to \infty.
\end{equation*}
For term (II), leveraging the estimates in Lemmas \ref{lm:pi.est} and \ref{lm:iteration.estimateentropy}, we conclude that
\begin{align}\label{ineq:II}
	|\text{(II)}|&\leq  A_0 \mathbb{E}_{s,x} \left[
	\sup_{l\in[s,T]}|X^{n}_{l} - X^{\infty}_{l}|^{\alpha} +|T \wedge\rho^{\infty}_{s,N_1} -T \wedge\rho^{n}_{s,N_1}|+ |X^{\infty}_{T \wedge \rho^{n}_{s,N_1}} - X^{\infty}_{T\wedge \rho^{\infty}_{s,N_1}}|^{\alpha} \right].
\end{align}
As $w^n \in \mathcal{M}_\lambda$, Lemma \ref{lm:derivatives_bounds} implies that both  $\tilde{b}^n(t,x) = \tilde{b}(t,x,m^n_t,\pi^n(t,x))$ and  $\sigma^n(t,x) = \sigma(t,x,m^n_t)$ are uniformly Lipschitz continuous in $x$. Applying the BDG inequality yields
\begin{align*}
\mathbb{E}_{s,x}\left[\sup_{l\in[s,t]}\left|X^{n}_l - X^{\infty}_l \right|^2\right] &\leq A_0 \mathbb{E}_{s,x} \bigg[ \int_s^t \left( \left| \tilde{b}^n(l,X^{n}_l) - \tilde{b}^\infty(l, X^{\infty}_l) \right|^2  + \left| \sigma^n(l, X^{n}_l) - \sigma^\infty(l,X^{\infty}_l) \right|^2 \right) dl \bigg].
\end{align*}
By decomposing the differences via the uniform spatial Lipschitz property of $\tilde{b}^\infty$ and $\sigma^\infty$, we obtain that
\begin{equation*}
	\mathbb{E}_{s,x}\left[ \sup_{l\in[s,t]} \left| X^{n}_l - X^{\infty}_l \right|^2\right] \leq A_0\left( \int_s^t \mathbb{E}_{s,x}\left[ \sup_{r\in[s,l]} \left| \tilde{X}^{n}_r - \tilde{X}^{\infty}_r \right|^2\right] dl + \epsilon_n(s,x) \right),
\end{equation*}
where
\begin{equation*}
	\epsilon_n(s,x) := \int_s^{T} \mathbb{E}_{s,x} \left[ |\tilde{b}^n(l, X^{n}_l) - \tilde{b}^\infty(l, X^{n}_l)|^2 + |\sigma^n(l, X^{n}_l) - \sigma^\infty(l, X^{n}_l)|^2 \right] dl.
\end{equation*}
For $N \in \mathbb{N}$ large enough, truncating the expectation over the domain $D_N$ yields
\begin{align*}
	\epsilon_n(s,x) \leq T \left( \|\tilde{b}^n - \tilde{b}^\infty\|_{\mathcal{C}^0(D_N)}^2 + \|\sigma^n - \sigma^\infty\|_{\mathcal{C}^0(D_N)}^2 \right) + \frac{A_0 \mathbb{E}_{s,x}[\sup_{l \in [s,T]}|X^{n}_l|^2]}{N^2}.
\end{align*}
By Lemma \ref{lm:Phi.continuous0} and the uniform bound on the second moments of $X^n$, taking the supremum over $(s,x)\in D_{N_0+1}(t,0)$ and $t\in[0,T]$, and first sending $n\to\infty$ and then sending $N\to\infty$, we obtain $$\sup_{t\in[0,T]}\sup_{(s,x) \in D_{N_0+1}(t,0)} \epsilon_n(s,x) \to 0.$$ Gronwall's inequality yields
\begin{equation}\label{ineq:gronwall1}
\begin{aligned}
&\sup_{t\in[0,T]}\sup_{(s,x) \in D_{N_0+1}(t,0)} \mathbb{E}_{s,x}\left[ \sup_{l \in [s,T]} \left|X^{n}_l - X^{\infty}_l\right|^2\right]
\leq A_0 \left(\sup_{t\in[0,T]}\sup_{(s,x) \in D_{N_0+1}(t,0)} \epsilon_n(s,x)\right) e^{A_0 T} \to 0,
\end{aligned}
\end{equation}
as $n\rightarrow\infty$. This uniform $L^2$-convergence ensures that the first term on the right-hand side of \eqref{ineq:II} vanishes uniformly for $(s,x) \in D_{N_0+1}(t,0)$ and $t\in[0,T]$. 
{Moreover, by Lemma \ref{lm:hitting.uniform},
\begin{equation}\label{eq:prob_uniform_fixed1}
	\lim_{n\to\infty}\sup_{t\in[0,T]} \sup_{(s,x) \in D_{N_0+1}(t,0)} \mathbb{P}_{s,x} \left( |\rho^n_{s,N_1}\wedge T - \rho^\infty_{s,N_1}\wedge T| > \epsilon \right) = 0.
\end{equation}}
It follows directly from \eqref{eq:prob_uniform_fixed1} that
\begin{equation*}
	\sup_{t\in[0,T]}\sup_{(s,x) \in D_{N_0+1}(t,0)} \mathbb{E}_{s,x} [|\rho^n_{s,N_1}\wedge T - \rho^\infty_{s,N_1}\wedge T|] \leq \epsilon  + T\sup_{t\in[0,T]} \sup_{(s,x) \in D_{N_0+1}(t,0)} \mathbb{P}_{s,x}(|\rho^n_{s,N_1}\wedge T - \rho^\infty_{s,N_1}\wedge T| > \epsilon).
\end{equation*}
Letting $n \to \infty$ first and then $\epsilon \to 0$, the expectation vanishes uniformly. By setting $A^n_{\epsilon} := \{|\rho^n_{s,N_1}\wedge T -\rho^\infty_{s,N_1}\wedge T|  > \epsilon\}$, the third term on the right-hand side of \eqref{ineq:II} is decomposed as
\begin{align*}
	\mathbb{E}_{s,x} \left[ |X^\infty_{T \wedge \rho^n_{s,N_1}} - X^\infty_{T \wedge \rho^\infty_{s,N_1}}|^\alpha \right] &= \mathbb{E}_{s,x} \left[ |X^\infty_{T \wedge \rho^n_{s,N_1}} - X^\infty_{T \wedge \rho^\infty_{s,N_1}}|^\alpha \mathbf{1}_{(A^n_\epsilon)^c} \right]  + \mathbb{E}_{s,x} \left[ |X^\infty_{T \wedge \rho^n_{s,N_1}} - X^\infty_{T \wedge \rho^\infty_{s,N_1}}|^\alpha \mathbf{1}_{A^n_\epsilon} \right].
\end{align*}
On the event $(A^n_\epsilon)^c$, it holds that
\begin{equation*}
	\mathbb{E}_{s,x} \left[ |X^\infty_{T \wedge \rho^n_{s,N_1}} - X^\infty_{T \wedge \rho^\infty_{s,N_1}}|^\alpha \mathbf{1}_{(A^n_\epsilon)^c} \right]  \leq C \epsilon^{\alpha/2},
\end{equation*}
where the last inequality follows from the BDG inequality and the uniform boundedness of coefficients. For the second term, H\"older's inequality with $p = 2/\alpha$ yields
\begin{align*}
	\mathbb{E}_{s,x} \left[ |\tilde{X}^\infty_{T \wedge \rho^n_{s,N_1}} - X^\infty_{T \wedge \rho^\infty_{s,N_1}}|^\alpha \mathbf{1}_{A^n_\epsilon} \right] &\leq \left( \mathbb{E}_{s,x} [|X^\infty_{T \wedge \rho^n_{s,N_1}} - X^\infty_{T \wedge \rho^\infty_{s,N_1}}|^2] \right)^{\alpha/2} \left( \mathbb{P}_{s,x}(A^n_\epsilon) \right)^{1-\alpha/2} \\
	&\leq \left( 2 \mathbb{E}_{s,x} [\sup_{l \in [s,T]} |X^\infty_l|^2] \right)^{\alpha/2} \left( \mathbb{P}_{s,x}(A^n_\epsilon) \right)^{1-\alpha/2}.
\end{align*}
Under Assumption \ref{assume.r}, $\mathbb{E}_{s,x}[\sup_{l \in [s,T]} |X^\infty_l|^2]$ is uniformly bounded. Combining these with \eqref{eq:prob_uniform_fixed1}, we obtain
\begin{equation*}
	\limsup_{n \to \infty}\sup_{t\in[0,T]} \sup_{(s,x) \in D_{N_0+1}(t,0)} \mathbb{E}_{s,x} \left[ |X^\infty_{T \wedge \rho^n_{s,N_1}} - X^\infty_{T \wedge \rho^\infty_{s,N_1}}|^\alpha \right] \leq C \epsilon^{\alpha/2}.
\end{equation*}
As $\epsilon$ is arbitrary, letting $\epsilon \to 0$ yields the uniform convergence of the third term. Consequently, it holds that
\begin{equation*}
	\sup_{t\in[0,T]}\| \text{(II)}\|_{\mathcal{C}^{0}(D_{N_0+1}(t,0))} \to 0, \quad \text{as } n \to \infty.
\end{equation*}
As a result, we conclude that
\begin{equation*}
    \sup_{t\in[0,T]}\|V^n_{N_1} (t,\cdot,\cdot) - V^\infty_{N_1} (t,\cdot,\cdot)\|_{\Cc^{0}(D_{N_0+1}(t,0))} \to 0, \quad \text{as } n \to \infty.
\end{equation*}
This, together with \eqref{eq:lmvbarN.eps}, yields
\begin{equation*}
    \sup_{t\in[0,T]} \|V^n(t,\cdot,\cdot) - V^\infty(t,\cdot,\cdot)\|_{\Cc^0(D_{N_0+1}(t,0))} \to 0, \quad \text{as } n \to \infty.
\end{equation*}
Similarly, the convergence also holds for  $\partial_t V^n(t,\cdot,\cdot) - \partial_tV^\infty(t,\cdot,\cdot)$. The desired result \eqref{eq:lmC12.vbarbound} holds.

By Lemma \ref{lm:Phi.continuous0} and the uniform bound $\sup_{n}\|V^n\|_{\widetilde{\Cc}^{1, 2}_{\alpha,[0, T]}}\leq A_{\lambda}$, the term $f^n$ in \eqref{eq:lmC12.fn} satisfies
 \be\label{eq:lmC12.fnvanish}
 \sup_{t\in[0,T]}\|f^n(t,\cdot,\cdot)\|_{\Cc_{\beta}(D_{N_0+1}(t,0))}+\sup_{t\in[0,T]}\|\partial_{t}f^n(t,\cdot,\cdot)\|_{\Cc_{\beta}(D_{N_0+1}(t,0))}\to 0 \text{ as $n\to \infty$}.
 \ee
Applying the Schauder estimate to \eqref{eq:PDF.Vbar} and its $t$-derivative yields that
{\small\begin{align}
 &\|\bar V^n(t,\cdot,\cdot)\|_{\Cc^{1,2}_{\beta}(D_{N_0}(t,0))}+ \|\partial_t\bar V^n(t,\cdot,\cdot)\|_{\Cc^{1,2}_{\beta}(D_{N_0}(t,0))}\nonumber\\&\leq C\bigg(  \|\bar V^n(t,\cdot,\cdot)\|_{\Cc^{0}(D_{N_0+1}(t,0))} + \|f^n(t,\cdot,\cdot)\|_{\Cc_{\beta}(D_{N_0+1}(t,0))} +\|F(t,\cdot,m^\infty_T)-F(t,\cdot,m^n_T)\|_{\Cc^2_{\beta}(B_{N_0+1}(0))} \label{eq:lm.barvest} \\
 &\quad+\|\partial_t\bar V^n(t,\cdot,\cdot)\|_{\Cc^{0}(D_{N_0+1}(t,0))} + \|\partial_tf^n(t,\cdot,\cdot)\|_{\Cc_{\beta}(D_{N_0+1}(t,0))} +\|\partial_tF(t,\cdot,m^\infty_T)-\partial_tF(t,\cdot,m^n_T)\|_{\Cc^2_{\beta}(B_{N_0+1}(0))}\bigg).\nonumber
\end{align}}
Following the same argument as in \eqref{est:dig2}, we  conclude that
\begin{align*}
     \|\bar{J}^{n}\|_{\Cc^{0,2}_{\beta}(D_{N_0})} &\leq\sup_{t\in [0,T]} \|\partial_t\bar{V}^{n}(t,\cdot,\cdot)\|_{\Cc^{0,2}([t,T]\times B_{N_0}(0))}+ \sup_{t\in[0,T]} \|\bar{V}^{n}(t,\cdot,\cdot)\|_{\Cc^{0,2}_{\beta}([t,T]\times B_{N_0}(0))}.
\end{align*}
\eqref{eq:lemma3.4}, \eqref{eq:lmC12.vbarbound}, \eqref{eq:lmC12.fnvanish}, and \eqref{eq:lm.barvest} together imply that the right-hand side of the above equation tends to zero as $n\to\infty$, which gives \eqref{eq:lm.continuous1}.

Finally, we show $d(\mu^{\pi^n, m^n}, \mu^{\pi^\infty, m^\infty}) \to 0$. By \eqref{ineq:w2}, we have
\begin{align*}
    d^2(\mu^{\pi^n, m^n}, \mu^{\pi^\infty, m^\infty})\leq \sup_{t\in[0,T]}\mathbb{E}\left[|X^{\pi^n, m^n}_t-X^{\pi^\infty, m^\infty}_t|^2\right],
\end{align*}
where for $n\in\mathbb{N}\cup\{\infty\}$,
\begin{align*}
    dX^{\pi^n, m^n}_t=\tilde{b}(t,X^{\pi^n, m^n}_t,m^n_t,\pi^n(t,X^{\pi^n, m^n}_{t}))dt+\sigma(t,X^{\pi^n, m^n}_{t},m^n_t)dW_t,\quad X^{\pi^n, m^n}_{0}=\xi\sim \nu.
\end{align*}
Then, it follows from the same spatial truncation and Gronwall inequality arguments used to derive \eqref{ineq:gronwall1} that
\begin{equation*}
    \sup_{t \in [0,T]} \mathbb{E}\left[ \left|X^{\pi^n, m^n}_t - X^{\pi^\infty, m^\infty}_t\right|^2\right] \leq A_0 \epsilon_n e^{A_0 T},
\end{equation*}
where
\begin{equation*}
    \epsilon_n := \int_0^T \mathbb{E} \left[ |\tilde{b}^n(s, X^{\pi^n, m^n}_s) - \tilde{b}^\infty(s, X^{\pi^n, m^n}_s)|^2 + |\sigma^n(s, X^{\pi^n, m^n}_s) - \sigma^\infty(s, X^{\pi^n, m^n}_s)|^2 \right] ds
\end{equation*}
tends to $0$ as $n \to \infty$. Thus, $d(\mu^{\pi^n, m^n}, \mu^{\pi^\infty, m^\infty}) \to 0$, which completes the proof.
\end{proof}

	We are now ready to prove Theorem \ref{thm:existence_regu} Part (ii).
	\begin{proof}[{\bf Proof of Theorem \ref{thm:existence_regu} Part (ii):}]

		Fix $0<\lambda<\lambda_0$ and an arbitrary $0\leq\beta<\alpha$. Lemmas \ref{lm:iteration.estimateentropy} and \ref{lm:Phi.continuous} show that the mapping $\Phi_\lambda$ defined in \eqref{eq:Phi.fixedpoint} is a continuous map from the compact convex set $\Ec_\lambda$ to itself, equipped with the product topology induced by the norm $\|\cdot\|_{\Cc^{0,2,\wg}_{\beta}([0,T]\times\R^d)}$ and the metric $d(\cdot, \cdot)$. By Schauder's fixed-point theorem (see, e.g., \cite[Theorem 5.28]{Rudin1991functional}), $\Phi_\lambda$ has a fixed point $(w_{\lambda}^*,m_{\lambda}^*)\in \Ec_\lambda$ such that $w_{\lambda}^*(t,x)=V^{\pi_{\lambda}^*,m_{\lambda}^*}_\lambda(t,t,x)$ with $\pi^*_{\lambda}(t,x,a)= \Gamma_{\lambda}(t,x, D_xw_{\lambda}^*(t,x),m^*_t, a)$. As a result, Lemma \ref{lm:iteration.estimateentropy} yields that $V^{\pi_{\lambda}^*,m_{\lambda}^*}_\lambda$ satisfies \eqref{eq:cha.HJBentropy_prime} with $m$ replaced by $m^*$. Moreover, the estimates of $V^{\pi_{\lambda}^*,m_{\lambda}^*}_\lambda$ in \eqref{eq:thm.entropy} are direct consequences of $(w_{\lambda}^*,m_{\lambda}^*)\in \Ec_\lambda$ and Lemma \ref{lm:iteration.estimateentropy}. By Theorem \ref{thm:existence_regu} (i), $(\pi^*,m^*)$ is a regularized equilibrium with entropy parameter $\lambda$.
	\end{proof}

	\section{Existence of Equilibrium by Vanishing Entropy Regularization}\label{sec:convergence}

	In this section, we return to the original time-inconsistent MFGs without entropy regularization. Our goal is to establish equilibrium existence by proving the appropriate convergence from the entropy-regularized problem to the original one as $\lambda\rightarrow 0+$.

Take a sequence $(\lambda_n)_{n\in \mathbb{N}}$ with $\lambda_n \to 0+$. For each $n\in \mathbb{N}$, Theorem \ref{thm:existence_regu} guarantees the existence of a regularized equilibrium $(\pi^n, m^n)$ with entropy parameter $\lambda_n$, satisfying the uniform upper bounds in \eqref{eq:thm.entropy}. Denote $V^n(t,s, x) := V^{\pi^n, m^n}_{\lambda_n}(t,s, x)$ for each $n$.
It then follows from the construction of the regularized equilibria that $(V^n, \pi^n, m^n)$ satisfies
\begin{align}
    \partial_s V^n (t,s, x) &+ \frac{1}{2}\operatorname{tr}\left((\sigma\sigma^T)(s,x,m^n_s) D^2_x V^n (t,s, x)\right) + \tilde{b}(s,x,m^n_s,\pi^n(s,x)) \cdot D_x V^n(t,s, x) \notag \\
    &+ \tilde{r}(s-t,x, m^n_s,\pi^n(s,x)) + \lambda_n \delta(s-t)\mathcal{H}(\pi^n(s,x)) = 0, \label{eq:HJBn1} \\
    V^n(t,T,x) &= F(t,x,m^n_T), \label{eq:HJBn2} \\
    \pi^n(t,x,a)&=  \Gamma_{\lambda_n}(t,x,D_x V^n(t,t,x),m^n_t,a),\label{eq:pin}   \\
    m^n_t &= \operatorname{law}(X^{\pi^n,m^n}_t), \label{eq:consisn}
\end{align}
where
\begin{equation*}
    dX^{\pi^n,m^n}_t = \tilde{b}(t,X^{\pi^n,m^n}_t,m^n_t,\pi^n(t,X^{\pi^n,m^n}_t))dt + \sigma(t,X^{\pi^n,m^n}_t,m^n_t)dW_t, \quad X^{\pi^n,m^n}_0 = \xi \sim \nu.
\end{equation*}
 We show that a subsequential limit of $(\pi^n, m^n)_{n\in \N}$ satisfies Definition \ref{def:equi.relaxed}. The following theorem is the main result of this section.
	\begin{Theorem}\label{thm:equi.existence}
	Let Assumptions \ref{assume.r} and \ref{assume.lipsa.U} hold. Then there exists an equilibrium $(\pi^*, m^*)$ satisfying Definition \ref{def:equi.relaxed}.
	\end{Theorem}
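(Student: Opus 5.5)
The plan is to pass to the limit along the sequence $(V^n,\pi^n,m^n)$ satisfying \eqref{eq:HJBn1}--\eqref{eq:consisn}, with $\lambda_n\to0$, and to check both conditions of Definition \ref{def:equi.relaxed} for a subsequential limit.

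\textbf{Step 1: compactness.} By \eqref{eq:thm.entropy}, $\|V^n\|_{\widetilde{\Cc}^{0,1}_{\alpha,[0,T]}}\vee\|V^n\|_{\widehat W^{1,2}_{p,[0,T]}}\le A^*$ uniformly in $n$. By \eqref{eq:est.mu}, $m^n\in\mathscr{M}_\nu^{\kappa,C^*}$, which is compact in $(\mathscr{M},d)$.
\begin{itemize}
\item[(a)] A diagonal Arzel\`a--Ascoli argument over balls $D_N$, uniform in $t$ for the $t$-norms, gives a subsequence along which $V^n\to V^\infty$, $D_xV^n\to D_xV^\infty$ and $\partial_tV^n\to\partial_tV^\infty$ locally uniformly on $\Delta_{[0,T]}\times\R^d$.
\item[(b)] Along the same subsequence, $D^2_xV^n(t,\cdot,\cdot)$ and $\partial_sV^n(t,\cdot,\cdot)$ converge weakly in $L^p_{loc}$, and $m^n\to m^\infty$ in $d$.
\item[(c)] Since $\pi^n$ are measurable maps into $\mathscr P(U)$ with $U$ bounded, Young measure theory gives a further subsequence and a relaxed policy $\pi^\infty\in\Ac$ with $\pi^n\,ds\,dx\to\pi^\infty\,ds\,dx$ weak-$*$ in the Young measure sense. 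Hence $\tilde b(\cdot,\cdot,m^n,\pi^n)\rightharpoonup\tilde b(\cdot,\cdot,m^\infty,\pi^\infty)$ weak-$*$ in $L^\infty$, and similarly for $\tilde r$. Here I use that $b$ and $r$ are continuous in $a$ and Lipschitz in $m$.
\item[(d)] Lemma \ref{lm:entropy.ln} together with the uniform bound on $D_xV^n$ gives $\lambda_n\mathcal H(\pi^n)\to0$ uniformly.
\end{itemize}
Passing to the limit in \eqref{eq:HJBn1}, I get that $V^\infty(t,\cdot,\cdot)\in W^{1,2}_{p,loc}$ is a strong solution of the linear equation with coefficients $\tilde b(\cdot,\cdot,m^\infty,\pi^\infty)$ and $\tilde r(\cdot-t,\cdot,m^\infty,\pi^\infty)$. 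The products $\tilde b^n\cdot D_xV^n$ pass to the limit because $D_xV^n$ converges strongly. This identifies $V^\infty=V^{\pi^\infty,m^\infty}$ via the It\^o--Krylov formula. It also gives the Hamiltonian identity on the diagonal: $\pi^\infty(s,x)$ is supported a.e. on $\arg\max_a\{b(s,x,m^\infty_s,a)\cdot D_xV^\infty(s,s,x)+r(0,x,m^\infty_s,a)\}$. This comes from the Gibbs concentration of $\pi^n$ as $\lambda_n\to0$ combined with the strong convergence of $D_xV^n(s,s,x)$.

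\textbf{Step 2: equilibrium condition (a).} I cannot apply It\^o to $V^\infty$ classically, so I follow the proof of Theorem \ref{thm:existence_regu}(i) at level $n$ and pass to the limit.
\begin{itemize}
\item[(a)] For fixed $(t,x)$, $\pi'$ and $\epsilon$, write $J^{\pi'\otimes_{t,\epsilon}\pi^\infty,m^\infty}(t,x)-J^{\pi^\infty,m^\infty}(t,x)$ through $V^{\pi^\infty,m^\infty}(t,t+\epsilon,\cdot)$.
\item[(b)] Apply the It\^o--Krylov formula, valid for $W^{1,2}_p$ functions with $p>d+2$ under nondegenerate diffusion, to $V^\infty(t,\cdot,\cdot)$ along $X^{\pi'}$ on $[t,t+\epsilon]$.
\item[(c)] Replace the PDE residual using the limit equation. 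The integrand becomes a difference of the type $H-L$ evaluated at time $t$ rather than $s$.
\item[(e)] Use the uniform H\"older bound in $t$ of $D_xV^\infty$, which follows from the $\partial_t$ bound in $\widetilde{\Cc}^{0,1}_\alpha$, together with the diagonal maximality from Step 1. This bounds the integrand by $C|l-t|^{\alpha/2}$ up to a.e.-defined sets.
\item[(f)] The Krylov estimate for nondegenerate diffusions makes these null sets invisible to $X^{\pi'}$. Localizing to balls and using the exit-time stability of Lemma \ref{lm:hitting.uniform} handles unboundedness.
\end{itemize}
This yields $O(\epsilon^{1+\alpha/2})$ and hence \eqref{eq:def.equipi}.

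\textbf{Step 3: consistency (b), the main obstacle.} Only weak-$*$ convergence of the drifts $\tilde b^n$ is available, so SDE stability fails. I would argue through densities.
\begin{itemize}
\item[(a)] By uniform ellipticity, $m^n_t$ has a density $p^n$ solving the divergence-form Fokker--Planck equation $\partial_tp^n=\partial_{ij}(a^np^n)-\operatorname{div}(\tilde b^np^n)$.
\item[(b)] Uniform energy and H\"older (De Giorgi--Nash type) estimates for divergence-form equations, with $a^n$ H\"older and converging uniformly, yield local compactness. I expect an Aubin--Lions argument plus tightness from the $(2+\kappa)$-moments to give strong $L^1$ convergence $p^n\to p^\infty=m^\infty$.
\item[(c)] The product $\tilde b^np^n$ then converges weakly to $\tilde b(\cdot,\cdot,m^\infty,\pi^\infty)p^\infty$, since weak-$*$ times strong $L^1$ converges.
\item[(d)] To identify $m^\infty$ with $\operatorname{law}(X^{\pi^\infty,m^\infty})$, use duality. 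For a test function $\phi$, solve the backward linear PDE $\partial_s\psi+\tfrac12\operatorname{tr}(\sigma\sigma^T D^2\psi)+\tilde b^\infty\cdot D\psi=0$, $\psi(T')=\phi$, in $W^{1,2}_p$.
\item[(e)] Pair $\psi$ with the limit Fokker--Planck equation to get $\int\phi\,dm^\infty_{T'}=\int\psi(0)\,d\nu$. The law of $X^{\pi^\infty,m^\infty}$ satisfies the same identity via It\^o--Krylov, and uniqueness follows.
\end{itemize}
The delicate points are the compactness of $p^n$ with only bounded measurable drift, and justifying the duality pairing at the low regularity available. If the De Giorgi route is awkward, I would first try Krylov-type $L^q$ density bounds together with time equicontinuity.
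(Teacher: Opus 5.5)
Your proposal is correct, and Steps 1--2 track the paper's proof closely. This covers the diagonal compactness in the $t$-norms, the Young-measure limit of $\pi^n$, and the strong solution of the limit EHJB. It also covers the Lipschitz-in-$t$ bound on $D_xV^\infty$ inherited from the uniform bound on $\partial_tD_xV^n$, and the It\^o--Krylov verification via the $H$--$L$ comparison.

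The differences lie in two places. For the diagonal maximality \eqref{lim:equb}, the paper uses the identity $\tilde b\cdot D_xV^n+\tilde r+\lambda_n\mathcal H(\pi^n)=\lambda_n\ln\int_U e^{f^n/\lambda_n}\,da$, taken on the diagonal with $\tilde b,\tilde r$ evaluated at $\pi^n$. It combines this with the Laplace limit $\lambda_n\ln\int_U e^{f^n/\lambda_n}\,da\to\max_a f^\infty$, tested against $\phi\in L^1$. Your Gibbs-concentration argument is equivalent, but it needs the same quantitative input, which you should state: the Lipschitz dependence on $a$ and the inner-cone condition, which together bound from below the Lebesgue measure of near-maximizer sets.

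More substantively, for consistency you pass to the limit in the Fokker--Planck equation and then test the \emph{limit} equation with the $W^{1,2}_p$ dual solution. The paper never forms a limit FP equation. It applies It\^o--Krylov to $u(t,X^n_t)$ at each fixed $n$, with $u$ solving \eqref{eq:dual_pde} for the limit coefficients, and obtains \eqref{eq:law_diff}. The pairing of $u$ with $p^n$ is then free, because $p^n$ is the law of a genuine nondegenerate diffusion. One only has to kill two explicit error terms:
\begin{itemize}
\item $\int(\tilde b^n-\tilde b^\infty)\cdot D_xu\,p^n$, by weak-$*$ times strong-$L^1$ convergence;
\item the $\sigma\sigma^T$-difference term, by uniform convergence times $\|D^2_xu\|_{L^p}\sup_n\|p^n\|_{L^q}$.
\end{itemize}
Your route yields the extra fact that $m^\infty$ solves the limit FP equation distributionally. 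However, it costs exactly the low-regularity justification you flagged: mollifying $\psi$, controlling commutators in $L^p_{\mathrm{loc}}$ against $p^\infty\in L^q$, and handling the time-boundary terms and spatial tails. The paper's route avoids all of this.

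Both routes hinge on strong $L^1$ compactness of $p^n$, and there your fallback would not work. Krylov-type $L^q$ bounds with time equicontinuity give only weak compactness, under which $\tilde b^np^n$ need not converge. In the paper, the $L^q$ bound only supplies uniform integrability for Vitali's theorem, after local uniform convergence has been obtained from divergence-form $L^\infty$ and H\"older estimates. The divergence-form rewriting uses that $\sigma$ is Lipschitz in $x$.

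Finally, Lemma \ref{lm:hitting.uniform} is not needed in Step 2. The localization there only uses the Chebyshev bound on $\P(\rho_{t,N}\le t+\epsilon_0)$, which follows from bounded coefficients.
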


	\subsection{Proof of Theorem \ref{thm:equi.existence}}


\begin{Lemma}\label{lm:thm.C12andyoung}
	There exist a subsequence $(V^{n_k}, \pi^{n_k}, m^{n_k})_{k\in \mathbb{N}}$ and a triple $(V^\infty, \pi^\infty, m^\infty)$ satisfying the following:
	\begin{itemize}
		\item[(i)] $V^\infty\in \Cc^{0,1}_{\alpha, [0,T]}\cap\widetilde{W}^{1,2}_{p, [0,T]}$ and for each $N\in \mathbb{N}$, $t\in[0,T]$ and any test function $\phi\in L^q(D_N(t,0))$ with $1/p+1/q=1$, it holds that
		\begin{equation}\label{eq:lm.converge}
		\begin{aligned}
			&\lim_{k\to\infty}\|V^{n_k}-V^\infty\|_{\Cc^{0,1}_{\beta,[0,T]\times B_{N}(0)}}=0, \quad \forall 0\leq\beta<\alpha,\\
			&\lim_{k\to\infty}\int_{D_N(t,0)} \phi(s,x) \left(\partial^l_sD^a_{x} V^{n_k}(t,s, x)-\partial^l_sD_{x}^a V^\infty(t,s, x) \right) dsdx = 0\quad \forall 0\leq 2l+|a|_{l^1}\leq 2,\\
			&   \|V^\infty\|_{\Cc^{0,1}_{\alpha,[0,T]}}\vee\|V^\infty\|_{\widetilde{W}^{1,2}_{p,[0,T]}}\leq A^*.
		\end{aligned}
		\end{equation}
		Moreover, $D_x V^\infty(t,s,x)$ is uniformly Lipschitz continuous with respect to $t$, i.e., there exists a constant $C>0$ such that
  \begin{equation}\label{eq:lm.lipschitz_t}
    |D_x V^\infty(t_1,s,x) - D_x V^\infty(t_2,s,x)| \leq C|t_1 - t_2|,
\end{equation}
for all $t_1, t_2 \in [0,T]$ and $(s,x) \in [0,T]\times\mathbb{R}^d$ with $s \geq t_1 \vee t_2$.
		\item[(ii)] $m^\infty\in \mathscr{M}_{\nu}^{\kappa,C^*}$ such that $d(m^{n_k}, m^\infty)\to 0$ as $k\to\infty$. The policy $\pi^\infty:[0,T]\times\R^d\rightarrow\mathscr{P}(U)$ is Borel measurable, and $\pi^{n_k}$ converges to $\pi^\infty$ in the sense that
		\begin{equation}\label{eq:convergence.young}
		\lim_{k\to\infty}\int_{[0,T]\times\R^d} \left(\int_U \phi(t, x,a)\pi^{n_k}(t, x,a)da\right)dtdx = \int_{[0,T]\times\R^d} \left(\int_U \phi(t,x,a)\pi^\infty(t, x,da)\right)dtdx
		\end{equation}
		for any test function $\phi(t, x,a):[0,T]\times\R^d\times U\rightarrow \R$ that is continuous in the control variable $a$ for almost every $(t,x)$ and
		\begin{equation*}
		\int_{[0,T]\times\R^d} \max_{a} |\phi(t, x,a)| dtdx<\infty.
		\end{equation*}
		Moreover, for any fixed $t \in [0,T]$,
      {\small \begin{equation}\label{eq:weakstar}
       \begin{aligned}
    \tilde{b}(s,x,m^{n_k}_s,\pi^{n_k}(s,x)) &\to \tilde{b}(s,x,m^{\infty}_s,\pi^{\infty}(s,x))\quad \text{in the weak-$\ast$ topology of } L^\infty([0,T]\times\R^d), \\
    \tilde{r}(s-t,x,m^{n_k}_s,\pi^{n_k}(s,x)) &\to \tilde{r}(s-t,x,m^{\infty}_s,\pi^{\infty}(s,x))\quad \text{in the weak-$\ast$ topology of } L^\infty([t,T]\times\R^d).
       \end{aligned}
      \end{equation}}


		\item[(iii)] $V^{\pi^\infty, m^\infty}=V^\infty$.
	\end{itemize}
\end{Lemma}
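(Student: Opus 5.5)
The argument is a compactness extraction followed by identification of the limits, using only the $\lambda$-independent bounds \eqref{eq:thm.entropy}. By Theorem \ref{thm:existence_regu}(ii), for every $n$ we have $\|V^n\|_{\widetilde{\Cc}^{0,1}_{\alpha,[0,T]}}\vee\|V^n\|_{\widehat{W}^{1,2}_{p,[0,T]}}\le A^*$, and $m^n\in\mathscr{M}_\nu^{\kappa,C^*}$, which is compact in $(\mathscr{M},d)$.

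\textbf{Part (i).} Uniform boundedness of $\partial_t V^n$ and $\partial_t D_xV^n$ (from the $\widetilde{\Cc}^{0,1}_\alpha$ bound) gives Lipschitz continuity in $t$ of $V^n$ and $D_xV^n$, uniformly in $(s,x)$ and $n$. The bound $\|V^n(t,\cdot,\cdot)\|_{\Cc^{0,1}_\alpha([t,T]\times\R^d)}\le A^*$ gives equicontinuity in $(s,x)$.

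\begin{itemize}
\item \emph{Uniform convergence on compacts.} Apply Arzelà--Ascoli on the compact sets $\{(t,s,x): t\le s\le T,\ |x|\le N\}$ to $V^n$ and $D_xV^n$. A diagonal argument over $N$ gives a subsequence converging locally uniformly to some $V^\infty$ with $D_xV^\infty$ the limit of $D_xV^{n_k}$.
\item \emph{Upgrade to $\Cc^{0,1}_\beta$.} For $\beta<\alpha$, interpolate between the uniform $\Cc^{0,1}_\alpha$ bound and the $\Cc^0$ convergence, as in \eqref{es:bn}. The interpolation constant is uniform in $t$, so the convergence holds in the $t$-norm $\Cc^{0,1}_{\beta,[0,T]\times B_N(0)}$.
\item \emph{Lipschitz property \eqref{eq:lm.lipschitz_t}.} The uniform bound on $\partial_tD_xV^n$ passes to the pointwise limit.
\item \emph{Weak convergence of second derivatives.} For each fixed $t$, $V^{n_k}(t,\cdot,\cdot)$ is bounded in $W^{1,2}_p(D_N(t,0))$, so a further subsequence converges weakly in $L^p$. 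The limit must be $\partial_s^lD^a_xV^\infty$ because of the uniform convergence. Uniqueness of this limit shows the whole subsequence converges without any $t$-dependent extraction, which yields the second line of \eqref{eq:lm.converge}.
\item \emph{Bounds on $V^\infty$.} These follow from lower semicontinuity of the norms under these convergences.
\end{itemize}

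\textbf{Part (ii).}
\begin{itemize}
\item \emph{Measure flows.} Compactness of $\mathscr{M}_\nu^{\kappa,C^*}$ gives $m^{n_k}\to m^\infty$ in $d$.
\item \emph{Policies.} Regard $\pi^{n_k}(t,x,a)\,da\,dt\,dx$ as Young measures on $[0,T]\times\R^d$ with values in $\mathscr{P}(\bar U)$. Since $U$ is bounded, the Young measure compactness theorem (weak-$\ast$ compactness in $L^1(\Cc(\bar U))^*=L^\infty_w(\mathscr{P}(\bar U))$, localized on $D_N$ and diagonalized) yields a measurable $\pi^\infty$ and \eqref{eq:convergence.young} for Carathéodory integrands with integrable $\max_a|\phi|$.
\item \emph{Support of $\pi^\infty$.} We need $\pi^\infty\in\mathscr{P}(U)$ rather than $\mathscr{P}(\bar U)$. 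I would either extend the coefficients continuously to $\bar U$ using the Lipschitz property in $a$ (Assumption \ref{assume.lipsa.U}), or simply allow $\bar U$ throughout.
\item \emph{Weak-$\ast$ limits \eqref{eq:weakstar}.} Test against $\psi\in L^1$ and split
\[
\tilde b(s,x,m^{n_k}_s,\pi^{n_k})-\tilde b(s,x,m^\infty_s,\pi^\infty)=\int_U[b(\cdot,m^{n_k}_s,a)-b(\cdot,m^\infty_s,a)]\pi^{n_k}da+\Big(\int_U b(\cdot,m^\infty_s,a)\pi^{n_k}da-\tilde b(\cdot,m^\infty_s,\pi^\infty)\Big).
\]
The first piece is bounded by $K_2d(m^{n_k},m^\infty)\|\psi\|_{L^1}$ via Assumption \ref{assume.r}(iv). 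The second tends to zero by \eqref{eq:convergence.young} with $\phi=\psi\,b(\cdot,m^\infty,a)$. The same argument works for $\tilde r$ on $[t,T]$.
\end{itemize}

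\textbf{Part (iii).} This is the main obstacle. Fix $t$ and pass to the limit in \eqref{eq:HJBn1}, tested against $\phi\in\Cc_c^\infty$.
\begin{itemize}
\item \emph{Linear and first-order terms.} The second-derivative terms converge by weak $L^p$ convergence. In $\tilde b^{n_k}\cdot D_xV^{n_k}$, $D_xV^{n_k}$ converges strongly (locally uniformly) and $\tilde b^{n_k}$ converges weak-$\ast$, so the product converges.
\item \emph{Entropy term.} Lemma \ref{lm:entropy.ln} gives $|\lambda_n\mathcal{H}(\pi^n)|\le\lambda_n(K_3+K_4|\ln\lambda_n|+K_5\ln(1+A^*))\to0$.
\item \emph{Limit equation.} Hence $V^\infty(t,\cdot,\cdot)$ is a $W^{1,2}_{p,loc}$ strong solution of the linear PDE with coefficients $\sigma(\cdot,m^\infty)$, $\tilde b(\cdot,m^\infty,\pi^\infty)$ and source $\tilde r(\cdot-t,\cdot,m^\infty,\pi^\infty)$. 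The terminal value $F(t,x,m^\infty_T)$ is attained by uniform convergence and continuity of $F$ in $m$.
\item \emph{Identification with $V^{\pi^\infty,m^\infty}$.} The drift is only bounded and measurable, so classical Itô does not apply. Since $p=\frac{d+2}{1-\alpha}>d+2$ and $\sigma\sigma^T$ is continuous and uniformly elliptic, Krylov's estimate lets us apply the Itô--Krylov formula for $W^{1,2}_{p,loc}$ functions, localized by exit times from $B_R$ and then $R\to\infty$ using boundedness of $V^\infty$. This gives $V^\infty(t,s,x)=V^{\pi^\infty,m^\infty}(t,s,x)$; the SDE has a unique strong solution by Veretennikov.
\end{itemize}
The delicate point is to justify this generalized Itô formula, including measurability of $\pi^\infty$ (so the drift is Borel), and to check the product-limit step carefully on each $D_N(t,0)$.
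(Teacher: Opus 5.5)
Your proposal is correct and follows essentially the same route as the paper. Both arguments use a diagonal Arzel\`a--Ascoli and weak-$L^p$ extraction for $V^n$ from the $\lambda$-independent bounds, compactness of $\mathscr{M}_\nu^{\kappa,C^*}$ together with Young-measure compactness, the same two-term splitting for \eqref{eq:weakstar}, and an It\^o--Krylov verification localized by exit times. The one difference in (iii) is that the paper tests the approximating PDEs directly against the killed occupation density $p^{s,x}\in L^q$, whereas you first pass to the limit PDE a.e.; this is equivalent. Your uniqueness-of-weak-limit argument (giving one $t$-independent subsequence) and your remark on $\bar U$ (the paper tacitly treats $U$ as compact) make explicit two points the paper only asserts.
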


	\begin{proof}
{\bf Part (i).} We first show that the limit of a subsequence exists in the space $\Cc^{0,1}_{\alpha, [0,T]}\cap \widetilde{W}^{1,2}_{p, [0,T]}$ using a diagonal argument.

In view of Theorem \ref{thm:existence_regu}, for each $N \in \mathbb{N}$, it holds that
\begin{align}
&\sup_{n\in \mathbb{N}}\left(\sup_{t\in[0,T]}\|V^{n}(t,\cdot,\cdot)\|_{\Cc^{0,1}_{\alpha}(D_N(t,0))}+\sup_{t\in[0,T]}\|\partial_tV^{n}(t,\cdot,\cdot)\|_{\Cc^{0,1}_{\alpha}(D_N(t,0))}\right)\leq A^*, \label{ineq:lm41}  \\
&\sup_{n\in \mathbb{N}}\left(\sup_{t\in[0,T]}\|V^{n}(t,\cdot,\cdot)\|_{ W^{1,2}_{p}(D_N(t,0))}+\sup_{t\in[0,T]}\|\partial_tV^{n}(t,\cdot,\cdot)\|_{ W^{1,2}_{p}(D_N(t,0))}\right)\leq A_N A^*,\label{ineq:lm42}
\end{align}
where $A^*$ is the finite constant stated in Theorem \ref{thm:existence_regu}, and $A_N$ is a finite constant depending solely on $d, p$, and $N$.


Take $(V^{0,n})_{n\in \mathbb{N}}=(V^n)_{n\in \mathbb{N}}$ and fix an arbitrary $0\leq \beta<\alpha$. We repeat the following discussion for each $N=T,T+1,\cdots$. Note that $\Cc^{0,1}_{\alpha}(D_N(t,0))$ is compactly embedded in $\Cc^{0,1}_{\beta}(D_N(t,0))$. Furthermore, \eqref{ineq:lm41} implies that  the mapping $t \mapsto V^n(t,\cdot,\cdot) \in \Cc^{0,1}_{\beta}(D_N(t,0))$ is uniformly equicontinuous. Hence, by the Arzel\`a-Ascoli theorem, we can find a subsequence $(V^{N-1, n_k})_{k\in \mathbb{N}}$ of $(V^{N-1,n})_{n\in \mathbb{N}}$ and a limit function $u^N$ defined on $\Delta_{[0,T]}\times B_N(0)$ such that
\begin{equation*}
    \lim_{k\to\infty}\sup_{t\in[0,T]}\|V^{N-1, n_k}(t,\cdot,\cdot)-u^N(t,\cdot,\cdot)\|_{\Cc^{0,1}_{\beta}(D_N(t,0))}= 0.
\end{equation*}
By \eqref{ineq:lm42}, the Banach–Alaoglu theorem and the Arzelà-Ascoli theorem yield a further subsequence of $(V^{N-1, n_k})_{k\in \mathbb{N}}$, denoted by $(V^{N, n})_{n\in \mathbb{N}}$, such that for any $t\in[0,T]$ and any test function $\phi \in L^q(D_N(t,0))$,
\begin{align*}
	\lim _{n \to \infty} \int_{D_N(t,0)} \phi(s, x) \partial_s V^{N, n}(t,s, x) dsdx &= \int_{D_N(t,0)} \phi(s, x) w(t,s, x) dsdx, \\
	\lim _{n \to \infty} \int_{D_N(t,0)} \phi(s, x) \partial_{x_i x_j}^2 V^{N, n}(t,s, x) dsdx &= \int_{D_N(t,0)} \phi(s, x) w^{i j}(t,s, x) dsdx, \quad \forall 1 \leq i, j \leq d,
\end{align*}
where the limit functions satisfy $w(t,\cdot,\cdot),\,w^{i j}(t,\cdot,\cdot)\in L^p(D_N(t,0))$. Then, for any test function $\phi \in C_c^{\infty}(D_N(t,0))$ with $\partial_s \phi, \partial_{x_i} \phi \in L^q(D_N(t,0))$, we have for each $1 \leq i, j \leq d$ that
\begin{align*}
	-\int_{D_N(t,0)} \phi(s, x) w^{i j}(t,s, x) dsdx &= -\lim _{n \to \infty} \int_{D_N(t,0)} \phi(s, x) \partial_{x_i x_j}^2 V^{N, n}(t,s, x) dsdx \\
	&= \lim _{n \to \infty} \int_{D_N(t,0)} \partial_{x_i} \phi(s, x) \partial_{x_j} V^{N, n}(t,s, x) dsdx \\
    &= \int_{D_N(t,0)} \partial_{x_i} \phi(s, x) \partial_{x_j} u^N(t,s, x) dsdx.
\end{align*}
Thus, $w^{i j}$ is exactly the weak derivative $\partial_{x_i x_j}^2 u^N$ for each $1 \leq i, j \leq d$. A similar  argument shows that $w$ is the weak derivative $\partial_s u^N$. Therefore, $u^N(t,\cdot,\cdot) \in W^{1,2}_p(D_N(t,0))$ for any $t\in[0,T]$, and
\begin{equation}\label{eq:lm.C12}
    \lim_{n\to\infty}\int_{D_N(t,0)} \phi(s,x) \left(\partial^l_s D^a_{x} V^{N, n}(t,s,x)-\partial^l_s D_{x}^a u^N(t,s,x) \right) dsdx = 0,
\end{equation}
for all $2l+|a| \leq 2$ and all $\phi\in L^q(D_N(t,0))$.

As a result, \eqref{eq:lm.C12} and the following convergence hold for the subsequence $(V^{N, n})_{n\in \mathbb{N}}$:
\begin{equation*}
    \lim_{n\to\infty}\sup_{t\in[0,T]}\|V^{N, n}(t,\cdot,\cdot)-u^N(t,\cdot,\cdot)\|_{\Cc^{0,1}_{\beta}(D_N(t,0))}= 0.
\end{equation*}
Proceeding this way for each $N$, we obtain a sequence of limits $u^N$. Note that $u^{N+1}(t,s,x)=u^N(t,s,x)$ for all $(t,s,x) \in \Delta_{[0,T]} \times B_N(0)$. Hence, there exists a well-defined global function $V^\infty\in \Cc^{0,1}_{\alpha,[0,T]}$ such that $V^\infty(t,s,x)=u^N(t,s,x) \in W^{1,2}_p(D_N(t,0))$ for each $N\in \mathbb{N}$, and it satisfies
\begin{equation*}
    \|V^\infty\|_{\Cc^{0,1}_{\alpha,[0,T]}}\leq A^*.
\end{equation*}
By taking the diagonal subsequence $(V^{N,N})_{N\in \mathbb{N}}$, we have that for each integer $K$ and each multi-index $(l,a)$ with $2l+|a| \leq 2$,
\begin{align*}
	&\lim_{N\to\infty}\sup_{t\in[0,T]}\|V^{N,N}(t,\cdot,\cdot)-V^\infty(t,\cdot,\cdot)\|_{\Cc^{0,1}_{\beta}(D_K(t,0))}=0,\\
	&\lim_{N\to\infty}\int_{D_K(t,0)} \phi(s,x) \left(\partial^l_s D^a_{x} V^{N,N}(t,s,x)-\partial^l_s D_{x}^a V^\infty(t,s,x) \right) dsdx = 0, \quad \forall \phi\in L^q(D_K(t,0)).
\end{align*}
Moreover, by fixing $t\in[0,T]$ and an arbitrary point $(t_0,x_0)\in [t,T]\times \R^d$, the weak convergence implies that for any $\phi\in L^q(D_1(t_0,x_0))$,
\begin{equation*}
    \lim_{N\to\infty}\int_{D_1(t_0,x_0)} \phi(s,x) \left(\partial^l_s D^a_{x} V^{N, N}(t,s,x)-\partial^l_s D_{x}^a V^\infty(t,s,x) \right) dsdx = 0.
\end{equation*}
Note that the dual representation of the $L^p$ norm for a generic function $g\in L^p(D_1(t_0,x_0))$ is given by
\begin{equation*}
    \|g\|_{L^p(D_1(t_0,x_0))} = \sup_{\|\phi\|_{L^q(D_1(t_0,x_0))}\leq 1} \int_{D_1(t_0,x_0)} \phi(s,x) g(s,x) dsdx.
\end{equation*}
Then, for any test function $\phi$ with $\|\phi\|_{L^q(D_1(t_0,x_0))}\leq 1$, it holds that
\begin{align*}
	\int_{D_1(t_0,x_0)} \phi(s,x) \partial^l_s D_{x}^a V^\infty(t,s,x) dsdx &= \lim_{N\to\infty} \int_{D_1(t_0,x_0)} \phi(s,x) \partial^l_s D^a_{x} V^{N, N}(t,s,x) dsdx \\
	&\leq \liminf_{N\to\infty} \|\partial^l_s D^a_x V^{N,N}(t,\cdot,\cdot)\|_{L^p(D_1(t_0,x_0))},
\end{align*}
which  implies that
\begin{equation*}
    \|\partial^l_s D^a_x V^\infty(t,\cdot,\cdot)\|_{L^p(D_1(t_0,x_0))} \leq \liminf_{N\to\infty} \|\partial^l_s D^a_x V^{N,N}(t,\cdot,\cdot)\|_{L^p(D_1(t_0,x_0))} \leq A^*, \quad \forall 2l+|a| \leq 2.
\end{equation*}
Taking the supremum over all $(t_0, x_0)$ yields that $\|V^\infty\|_{\widetilde{W}^{1,2}_{p,[0,T]}}\leq A^*$.

Finally, we verify the uniform Lipschitz continuity of $D_x V^\infty$ with respect to the parameter $t$. Recall from Lemma \ref{lm:iteration.estimateentropy} (specifically the uniform estimates on $\partial_t V^n$) that
$$\sup_{n\in \mathbb{N}} \sup_{t\in[0,T]} \|\partial_t V^n(t,\cdot,\cdot)\|_{\Cc^{0,1}_\alpha([t,T]\times\R^d)} \leq A^*.$$ Because the mixed derivatives are consistent (i.e., $\partial_t D_x V^n = D_x \partial_t V^n$), we have $$\sup_{n\in \mathbb{N}} \sup_{t\in[0,T]} \|\partial_t D_x V^n(t,s,x)\|_{\Cc^0([t,T]\times\R^d)} \leq A^*,$$ which implies that $D_x V^n$ is uniformly Lipschitz continuous in $t$, independently of $n, s$, and $x$. That is, for any $t_1, t_2 \in[0,T]$ and $s \geq t_1 \vee t_2$,
\begin{equation*}
   |D_x V^{n_k}(t_1,s,x) - D_x V^{n_k}(t_2,s,x)| \leq A^* |t_1 - t_2|.
\end{equation*}
As $D_x V^{n_k} \to D_x V^\infty$ pointwise as $k \to \infty$, taking the limit $k \to \infty$ in the above inequality gives \eqref{eq:lm.lipschitz_t}.

{\bf Part (ii).}
Next, we consider the associated sequence $(m^{N,N}, \pi^{N,N})_{N\in \mathbb{N}}$ corresponding to the sequence $(V^{N,N})_{N\in \mathbb{N}}$. Note that the control space $U$ is compact. By the compactness of $\mathscr{M}_{\nu}^{\kappa,C^*}$ and Young measure theory (see, e.g., \cite[Chapter IV]{warga2014optimal}), there exists a subsequence $(m^{n_k}, \pi^{n_k})_{k\in \mathbb{N}}$ of $(m^{N,N}, \pi^{N,N})_{N\in \mathbb{N}}$ and a limit $(m^\infty, \pi^\infty)$ such that
\begin{align*}
&m^\infty\in \mathscr{M}_{\nu}^{\kappa,C^*}, \text{ and } \lim_{k\to\infty}d(m^{n_k}, m^\infty)=0,\\
&\pi^\infty: [0,T]\times\R^d\to\mathscr{P}(U) \text{ is Borel measurable, and the convergence in \eqref{eq:convergence.young} holds}.
\end{align*}
Moreover, for any test function $\phi \in L^1([0,T]\times\mathbb{R}^d)$, by Assumptions \ref{assume.r} and \ref{assume.lipsa.U}, we have
\begin{equation*}
\begin{cases}
\text{The mapping $a \rightarrow b(t,x, m^{\infty}_t, a)\phi(t,x)$ is continuous for every $(t,x)$},\\
\int_{[0,T]\times\mathbb{R}^d} \sup_{a \in U} |b(t,x, m_t^{\infty},a) \phi(t,x)| dtdx \leq K_1 \int_{[0,T]\times\mathbb{R}^d} |\phi(t,x)| dtdx < \infty.
\end{cases}
\end{equation*}
Note that
\begin{align*}
    &\left| \int_{[0,T]\times\mathbb{R}^d} \tilde{b}(t,x,m^{n_k}_t,\pi^{n_k}(t,x)) \phi(t,x) dtdx - \int_{[0,T]\times\mathbb{R}^d} \tilde{b}(t,x,m^{\infty}_t,\pi^{\infty}(t,x)) \phi(t,x) dtdx \right| \\
    &\leq \int_{[0,T]\times\mathbb{R}^d} \left| \int_U[b(t,x, m^{n_k}_t,a) - b(t,x, m^{\infty}_t,a)] \pi^{n_k}(t,x, a) da \right| |\phi(t,x)| dtdx \\
    &\quad + \bigg| \int_{[0,T]\times\mathbb{R}^d} \int_U b(t,x, m^{\infty}_t,a) \phi(t,x) \pi^{n_k}(t,x, a) da dtdx \\
    &\;\qquad - \int_{[0,T]\times\mathbb{R}^d} \int_U b(t,x, m^{\infty}_t,a) \phi(t,x) \pi^{\infty}(t,x, a) da dtdx \bigg|.
\end{align*}
For the first term, the Lipschitz continuity of $b$ with respect to $m$ in Assumption \ref{assume.r} implies that
\begin{align*}
   & \int_{[0,T]\times\mathbb{R}^d} \left| \int_U[b(t,x, m^{n_k}_t,a) - b(t,x, m^{\infty}_t,a)] \pi^{n_k}(t,x, a) da \right| |\phi(t,x)| dtdx \\
    &\leq K_2 \sup_{t\in[0,T]} \mathcal{W}_2(m^{n_k}_t, m^\infty_t) \int_{[0,T]\times\mathbb{R}^d} |\phi(t,x)| dtdx,
\end{align*}
which tends to $0$ as $k \to \infty$ since $d(m^{n_k}, m^\infty) \to 0$.
For the second term, by \eqref{eq:convergence.young}, we have
\begin{align*}
&\lim _{k \rightarrow \infty} \int_{[0,T]\times\mathbb{R}^d} \tilde{b}(t,x,m^{\infty}_t,\pi^{n_k}(t,x)) \phi(t,x) dtdx \\
&= \lim _{k \rightarrow \infty} \int_{[0,T]\times\mathbb{R}^d} \left(\int_U[b(t,x, m^{\infty}_t,a) \phi(t,x)] \pi^{n_k}(t,x, a) da\right) dtdx \\
&= \int_{[0,T]\times\mathbb{R}^d} \left(\int_U[b(t,x,m^{\infty}_t, a) \phi(t,x)] \pi^{\infty}(t,x, a) da\right) dtdx \\
&= \int_{[0,T]\times\mathbb{R}^d} \tilde{b}(t,x,m^{\infty}_t,\pi^{\infty}(t,x)) \phi(t,x) dtdx.
\end{align*}
Therefore, we obtain
\begin{equation*}
    \lim_{k \rightarrow \infty} \int_{[0,T]\times\mathbb{R}^d} \tilde{b}(t,x,m^{n_k}_t,\pi^{n_k}(t,x)) \phi(t,x) dtdx = \int_{[0,T]\times\mathbb{R}^d} \tilde{b}(t,x,m^{\infty}_t,\pi^{\infty}(t,x)) \phi(t,x) dtdx.
\end{equation*}
An analogous argument holds for the reward function $r(s-t,x,m_s,a)$. Thus, the weak-$\ast$ convergence result in \eqref{eq:weakstar}  is established.

{\bf Part (iii).} We next show that $V^{\pi^\infty,m^{\infty}}(t,s,x) = V^\infty(t,s,x)$ for any $(t,s,x) \in \Delta_{[0,T]}\times \R^d$. Fix an arbitrary $(s,x)\in [t,T]\times \R^d$. By the boundedness of $b$ and the conditions on $\sigma$ stated in Assumption \ref{assume.r}, the SDE
\begin{equation*}
    dX^{\infty}_l = \tilde{b}(l,X_l^{\infty},m_l^{\infty},\pi^{\infty}(l,X^{\infty}_l))dl + \sigma(l,X^{\infty}_l,m^{\infty}_l)dW_l \quad \text{for $l\geq s$ with } X^{\infty}_s = x,
\end{equation*}
admits a unique strong solution (see \cite[Theorem 1]{veretennikov1981strong}). By Krylov's inequality,  for any $T>0$,  $X^{\infty}_l$ admits a density function belonging to $L^q([s,T]\times \R^d)$ with $q=\frac{d+2}{d+1+\alpha}=\frac{p}{p-1}$ (see, e.g., \cite{krylov1980controlled}).
Define  $\rho_{s,N} := \inf\{l\geq s: X^{\infty}_l \notin B_N(0)\}$. For an arbitrary $\epsilon > 0$, since the coefficients are  bounded, there exists $N_0 > 0$ such that $\mathbb{P}_{s,x}(\rho_{s,N} \leq T) \leq \frac{\epsilon}{3A_0}$ for all $N \geq N_0$. Then,
\begin{align}\label{eq:lm.verify1}
    &\mathbb{E}_{s,x} \bigg[ \left|V^{\infty}(t, T\wedge\rho_{s,N}, X^{\infty}_{T\wedge\rho_{s,N}}) - F(t,X^{\infty}_T,m^{\infty}_T)\right|\mathbf{1}_{\{\rho_{s,N}<T\}} \bigg] \notag \\
    &+ \mathbb{E}_{s,x} \left[ \int_{\rho_{s,N}}^T \left|\tilde{r}(l-t,X^{\infty}_l,m^{\infty}_l,\pi^{\infty}(l,X^{\infty}_l))\right| dl \, \mathbf{1}_{\{\rho_{s,N}<T\}} \right] \leq \epsilon\quad \forall N\geq N_0.
\end{align}
Take any $N\in \mathbb{N}$ with $N \geq \max\{T, N_0\}$. In view of $V^\infty \in \Cc^{0,1}_{\alpha, [0,T]} \cap \widetilde{W}^{1,2}_{p,[0,T]}$, we can apply the It\^o-Krylov formula (see, e.g., \cite[Section 2.10, Theorem 1]{krylov1980controlled}) to $V^\infty(t, l, X^{\infty}_l)$ on the interval $l \in[s, T\wedge\rho_{s,N}]$ to get
\begin{align*}
	&V^{\infty}(t, T\wedge\rho_{s,N}, X^{\infty}_{T\wedge\rho_{s,N}}) - V^{\infty}(t,s,x) \\
	&= \int_{s}^{T\wedge\rho_{s,N}} \bigg( \partial_s V^\infty(t,l, X^{\infty}_l)+ \tilde{b}(l,X^{\infty}_l, m^{\infty}_l,\pi^{\infty}(l,X^{\infty}_l)) \cdot D_x V^\infty(t, l, X^{\infty}_l) \\
    &\qquad \qquad \quad \quad + \frac{1}{2}\operatorname{tr}\bigg((\sigma\sigma^T)(l,X^{\infty}_l,m^{\infty}_l) D^2_x V^\infty(t,l, X^{\infty}_l) \bigg) \bigg) dl \\
	&\quad+ \int_{s}^{T\wedge\rho_{s,N}} \left(D_x V^\infty(t,l, X^{\infty}_l)\right)^T \sigma(l,X^{\infty}_l,m^{\infty}_l) dW_l.
\end{align*}
Denote $p^{s,x}(l,y):= \P(X^{\infty}_l\in dy, l\leq \rho_{s,N})$. It holds that $p^{s,x}\in L^q(D_N(s,0))$. Indeed,
\begin{align*}
\int_{D_N(s,0)}\left|p^{s,x}(l, y)\right|^q d l d y&=\int_{D_N(s,0)}\left|{\mathbb{P}_{s,x}\left(X_l^{\infty} \in d y, l \leq \rho_{s,N}\right)}\right|^q dl d y
\leq  \int_{D_N(s,0)}
\left|\mathbb{P}_{s,x}\left(X_l^{\infty} \in d y\right)\right|^q d l d y<\infty.
\end{align*}
Taking expectations on both sides gives
\begin{equation*}
\begin{aligned}
&\mathbb{E}_{s,x}[V^{\infty}(t,T\wedge\rho_{s,N},X^{\infty}_{T\wedge\rho_{s,N}})]-V^{\infty}(t,s,x)\\
	&=\mathbb{E}_{s,x}\bigg[\int_{s}^{T\wedge\rho_{s,N}}\bigg(\partial_s V^\infty(t,l, X^{\infty}_l)+ \tilde{b}(l,X^{\infty}_l, m^{\infty}_l,\pi^{\infty}(l,X^{\infty}_l)) \cdot D_x V^\infty(t, l, X^{\infty}_l)\\
    &\qquad\qquad\qquad\qquad +\frac{1}{2}\operatorname{tr}\bigg((\sigma\sigma^T)(l,X^{\infty}_l,m^{\infty}_l) D^2_x V^\infty(t,l, X^{\infty}_l) \bigg)\bigg) dl\bigg]\\
	&= \int_{D_N(s,0)} \bigg[\partial_s V^\infty(t,l, y)+ \tilde{b}(l,y,m^{\infty}_l,\pi^{\infty}(l,y)) \cdot D_x V^\infty(t,l, y)\\
    &\qquad\qquad\quad+\frac{1}{2}\operatorname{tr}\left((\sigma\sigma^T)(l,y,m^{\infty}_l) D^2_x V^\infty(t,l,y) \right)\bigg] p^{s,x}(l,y) dy dl\\
	&=\lim_{k\to\infty} \int_{D_N(s,0)} \bigg[\partial_s V^{n_k}(t,l, y)+ \tilde{b}(l,y,m^{n_k}_l,\pi^{n_k}(l,y)) \cdot D_x V^{n_k}(t,l, y)\\
    &\qquad\qquad\qquad\qquad+\frac{1}{2}\operatorname{tr}\left((\sigma\sigma^T)(l,y,m^{n_k}_l) D^2_x V^{n_k}(t,l,y) \right)\bigg] p^{s,x}(l,y) dy dl\\
	&=\lim_{k\to\infty} \int_{D_N(s,0)}-\tilde{r}(l-t,y,m^{n_k}_l,\pi^{n_k}(l,y))
	p^{s,x}(l,y) dy dl\\
	&=\int_{D_N(s,0)}-\tilde{r}(l-t,y,m^{\infty}_l,\pi^{\infty}(l,y)) p^{s,x}(l,y) dy dl\\
	&= -\mathbb{E}_{s,x}\left[  \int_s^{T\wedge\rho_{s,N}} \tilde{r}(l-t,X^{\infty}_l,m^{\infty}_l,\pi^{\infty}(l,X^{\infty}_l)) dl \right],
\end{aligned}
\end{equation*}
where the third equality follows from \eqref{eq:lm.converge}, \eqref{eq:weakstar} and the continuity of $\sigma$ in $m$ (Assumption \ref{assume.r}), the fourth equality follows from \eqref{eq:HJBn1} and $\|\lambda_{n_k}\mathcal{H}(\pi^{n_k})\|_{L^\infty([0,T]\times\R^d)}\to 0$, and the fifth equality follows from \eqref{eq:weakstar} again. Together with \eqref{eq:lm.verify1}, the above result implies that
\begin{equation*}
\begin{aligned}
&|V^{\infty}(t,s,x)- V^{\pi^\infty,m^{\infty}}(t,s,x)|\\
&=\left| \mathbb{E}_{s,x}[V^{\infty}(t,T\wedge\rho_{s,N},X^{\infty}_{T\wedge\rho_{s,N}})]+\mathbb{E}_{s,x}\left[  \int_s^{T\wedge\rho_{s,N}} \tilde{r}(l-t,X^{\infty}_l,m^{\infty}_l,\pi^{\infty}(l,X^{\infty}_l)) dl \right]-V^{\pi^\infty,m^{\infty}}(t,s,x)\right|\\
&\leq \mathbb{E}_{s,x}\bigg[\left|V^{\infty}(t,T\wedge\rho_{s,N},X^{\infty}_{T\wedge\rho_{s,N}})-F(t,X^{\infty}_T,m^{\infty}_T)\right|\mathbf{1}_{\{\rho_{s,N}<T\}}\bigg]  \\
&\quad+ \mathbb{E}_{s,x}\left[ \int_{\rho_{s,N}}^T \left|\tilde{r}(l-t,X^{\infty}_l,m^{\infty}_l,\pi^{\infty}(l,X^{\infty}_l))\right| dl\mathbf{1}_{\{\rho_{s,N}<T\}} \right] \\
&\leq \epsilon.
\end{aligned}
\end{equation*}
Consequently, $V^{\infty}(t,s,x)= V^{\pi^\infty,m^{\infty}}(t,s,x)$ holds as desired.
	\end{proof}

\begin{proof}[{\bf Proof of Theorem \ref{thm:equi.existence}:}]
We show that the limit $(\pi^{\infty},m^{\infty})$ in Lemma \ref{lm:thm.C12andyoung} is indeed an equilibrium satisfying Definition \ref{def:equi.relaxed}.

{\bf Verification of Condition (a).} We first prove that \eqref{eq:def.equipi} holds for $(\pi^{\infty},m^{\infty})$. Let us begin by showing that
\begin{align}
    &\tilde{b}(t,x,m^{\infty}_t,\pi^{\infty}(t,x)) \cdot D_x V^{\infty}(t,t, x) + \tilde{r}(0, x,m^{\infty}_t,\pi^{\infty}(t,x)) \notag \\
    &\qquad = \sup _{\varpi \in \mathscr{P}(U)}\left\{\int_U\left[b(t,x,m^{\infty}_t, a) \cdot D_x V^{\infty}(t,t, x) + r(0, x, m^{\infty}_t, a)\right] \varpi(d a)\right\}, \quad \text{a.e. on } [0,T]\times\mathbb{R}^d,\label{lim:equb} \\
    &\partial_s V^{\infty}(t,s, x) + \frac{1}{2} \operatorname{tr}\left(\left(\sigma \sigma^T\right)(s,x,m^{\infty}_s) D_x^2 V^{\infty}(t,s, x)\right) + \tilde{b}(s,x,m^{\infty}_s,\pi^{\infty}(s,x)) \cdot D_x V^{\infty}(t,s, x) \notag \\
    &\qquad + \tilde{r}(s-t, x,m^{\infty}_s,\pi^{\infty}(s,x)) = 0, \quad \text{a.e. for } (s,x)\in[t,T)\times \mathbb{R}^d,\label{lim:eqhjb}\\
    &V^{\infty}(t,T, x) = F(t,x,m^{\infty}_T).\label{lim:eqhjb2}
\end{align}
We first verify that $V^{\infty}$ is a strong solution to \eqref{lim:eqhjb}--\eqref{lim:eqhjb2} by taking the limit in \eqref{eq:HJBn1}--\eqref{eq:HJBn2}. Note that \eqref{lim:eqhjb2} is a direct result of the continuity of $F$ in $m$ (Assumption \ref{assume.r}).

Recall that $V^\infty \in \Cc^{0,1}_{\alpha, [0,T]} \cap \widetilde{W}^{1,2}_{p, [0,T]}$. Fix an arbitrary $t\in[0,T)$, and let $(t_0, x_0)$ be any point belonging to $[t,T) \times \mathbb{R}^d$. A direct calculation gives, for any test function $\phi \in L^q\left(D_1\left(t_0, x_0\right)\right)$, that
\begin{align*}
    0 &= \lim _{k \rightarrow \infty} \int_{D_1\left(t_0, x_0\right)} \bigg( \partial_s V^{n_k}(t,s, x) + \frac{1}{2} \operatorname{tr}\left(\left(\sigma \sigma^T\right)(s,x,m^{n_k}_s) D_x^2 V^{n_k}(t,s, x)\right) \\
    &\qquad\qquad\qquad\quad + \tilde{b}(s,x,m^{n_k}_s,\pi^{n_k}(s,x)) \cdot D_x V^{n_k}(t,s, x) + \tilde{r}(s-t, x,m^{n_k}_s,\pi^{n_k}(s,x)) \\
    &\qquad\qquad\qquad\quad + \lambda_{n_k} \delta(s-t) \mathcal{H}\left(\pi^{n_k}(s,x)\right) \bigg) \phi(s, x) dsdx \\
    &= \int_{D_1\left(t_0, x_0\right)} \bigg( \partial_s V^{\infty}(t,s, x) + \frac{1}{2} \operatorname{tr}\left(\left(\sigma \sigma^T\right)(s,x,m^{\infty}_s) D_x^2 V^{\infty}(t,s, x)\right) \\
    &\qquad\qquad\qquad + \tilde{b}(s,x,m^{\infty}_s,\pi^{\infty}(s,x)) \cdot D_x V^{\infty}(t,s,x) + \tilde{r}(s-t, x,m^{\infty}_s,\pi^{\infty}(s,x)) \bigg) \phi(s, x) dsdx,
\end{align*}
where the second equality follows from $\lim _{k \rightarrow \infty}\left\|\lambda_{n_k} \mathcal{H}\left(\pi^{n_k}\right)\right\|_{L^{\infty}\left([0,T]\times\mathbb{R}^d\right)} = 0$, Lemma \ref{lm:thm.C12andyoung} Part (i), \eqref{eq:weakstar} in Lemma \ref{lm:thm.C12andyoung} Part (ii), and the continuity of $\sigma$ in $m$. By the arbitrariness of $\left(t_0, x_0\right)$, we conclude that $V^{\infty}$ is a strong solution to \eqref{lim:eqhjb}.

Next, we prove \eqref{lim:equb}. 
For each $n \in \mathbb{N} \cup\{\infty\}$, define
\begin{equation*}
    f^n(t,x, a) := b(t,x, m^{n}_t, a) \cdot D_x V^n(t,t, x) + r(0, x, m^n_t, a), \quad A^n(t,x) := \max _{a \in U} f^n(t,x, a).
\end{equation*}
First, for any uniformly equicontinuous sequence of functions $g^n: U \rightarrow \mathbb{R}$ converging to a limit $g^{\infty}(a)$, it holds that
\begin{equation*}
    \lim _{n \rightarrow \infty} \lambda_n \ln \left\{\int_U \exp \left[\frac{1}{\lambda_n} g^n(a)\right] d a\right\} = \max _{a \in U} g^{\infty}(a).
\end{equation*}
Therefore, for any test function $\phi \in L^1\left([0,T]\times\mathbb{R}^d\right)$,
\begin{align*}
    & \int_{[0,T]\times\mathbb{R}^d}\left[\tilde{b}(t,x,m^{\infty}_t,\pi^{\infty}(t,x)) \cdot D_x V^{\infty}(t, t,x) + \tilde{r}(0, x,m^{\infty}_t,\pi^{\infty}(t,x))\right] \phi(t,x) dtdx \\
    & = \lim _{k \rightarrow \infty} \int_{[0,T]\times\mathbb{R}^d}\big[\tilde{b}(t,x,m^{n_k}_t,\pi^{n_k}(t,x)) \cdot D_x V^{n_k}(t, t,x) \\
    &\qquad\qquad \qquad \qquad+ \tilde{r}(0, x,m^{n_k}_t,\pi^{n_k}(t,x)) + \lambda_{n_k} \mathcal{H}\left(\pi^{n_k}(t,x)\right)\big] \phi(t,x) dtdx \\
    & = \lim _{k \rightarrow \infty} \int_{[0,T]\times\mathbb{R}^d} \lambda_{n_k} \ln \left\{\int_U \exp \left[\frac{1}{\lambda_{n_k}} f^{n_k}(t,x, a)\right] da\right\} \phi(t,x) dtdx \\
    &= \int_{[0,T]\times\mathbb{R}^d} A^{\infty}(t,x) \phi(t,x) dtdx,
\end{align*}
where the first equality follows from Lemma \ref{lm:thm.C12andyoung} Parts (i) and (ii) and the fact that $$\lim _{k \rightarrow \infty}\left\|\lambda_{n_k} \mathcal{H}\left(\pi^{n_k}\right)\right\|_{L^{\infty}\left([0,T]\times\mathbb{R}^d\right)}=0.$$ Since this result holds for any $\phi \in L^1([0,T]\times\mathbb{R}^d)$, we conclude that the integrands are equal almost everywhere, and hence \eqref{lim:equb} holds.


 For any $(t,s,x) \in \Delta_{[0,T]} \times \mathbb{R}^d$, define the following functions:
\begin{align*}
    H(t,s,x) &:= \sup_{\varpi \in \mathscr{P}(U)} \left\{ \int_U \left( b(s,x,m^\infty_s, a) \cdot D_x V^\infty(t,s,x) + r(s-t,x,m^{\infty}_s,a) \right) \varpi(da) \right\}, \\
    L(t,s,x) &:= \tilde{b}(s,x, m^\infty_s,\pi^\infty(s,x)) \cdot D_x V^\infty(t,s,x) + \tilde{r}(s-t,x,m^\infty_s,\pi^\infty(s,x)).
\end{align*}
We first observe that there exists a finite constant $C > 0$, independent of $t$ and $s$, such that
\begin{equation}\label{eq:origin.thm1}
    \sup_{x\in \mathbb{R}^d} |H(t,s,x) - H(s,s,x)| \leq C|t-s|^{\frac{\alpha}{2}}, \quad \forall s\in [t,T].
\end{equation}
Indeed, for any fixed $(t,s,x)\in\Delta_{[0,T]}\times \mathbb{R}^d$, by the compactness of $U$ and the continuity of the coefficients, there exists a maximizer $\tilde{a} \in U$ such that
\begin{align*}
    H(t,s,x) &= b(s,x, m^{\infty}_s,\tilde{a}) \cdot D_x V^\infty(t,s,x) + r(s-t,x,m^{\infty}_s,\tilde{a}) \\
    &\leq b(s,x,m^{\infty}_s, \tilde{a}) \cdot D_x V^\infty(s,s,x) + r(0,x,m^{\infty}_s,\tilde{a}) + C|t-s|^{\frac{\alpha}{2}} \\
    &\leq H(s,s,x) + C|t-s|^{\frac{\alpha}{2}},\quad \forall (s,x)\in [t,T]\times \R^d.
\end{align*}
where the first inequality follows from \eqref{eq:lm.lipschitz_t} and the H\"older continuity of $r(\cdot,x,m,a)$ with respect to its first argument. By applying a symmetric argument to $H(s,s,x) - H(t,s,x)$, we obtain \eqref{eq:origin.thm1}. Similarly, we have
\begin{equation}\label{eq:origin.thm3}
       \sup_{x\in \mathbb{R}^d} |L(t,s,x) - L(s,s,x)| \leq C|t-s|^{\alpha/2}, \quad \forall s\in [t,T].
\end{equation}
Combining \eqref{eq:origin.thm1} and \eqref{eq:origin.thm3} yields that, for any fixed $t \in [0,T)$,
\begin{equation}\label{eq:origin.thm5}
\begin{aligned}
    L(t,s,x) - H(t,s,x) &= (L(t,s,x) - L(s,s,x)) + (L(s,s,x) - H(s,s,x)) + (H(s,s,x) - H(t,s,x)) \\
    &\geq L(s,s,x) - H(s,s,x) - 2C|t-s|^{\alpha/2} \\
    &\geq -2C|t-s|^{\alpha/2}, \quad \text{for a.e. } (s,x) \in [t,T] \times \mathbb{R}^d,
\end{aligned}
\end{equation}
where the last inequality follows from \eqref{lim:equb}.


Fix an arbitrary $(t,x) \in[0,T]\times \mathbb{R}^d$ and take an arbitrary $\pi' \in \mathcal{A}$. By the boundedness of $b$ and the conditions on $\sigma$ stated in Assumption \eqref{assume.r}, the SDE
$$
d X_l^{\pi'}= \tilde{b}(l,X_l^{\pi'},m^{\infty}_l,\pi'(l,X_l^{\pi'})) dl+\sigma\left(l,X_l^{\pi'},m^{\infty}_l\right) d W_l \quad \text { with } X_t^{\pi'}=x
$$
admits a unique strong solution. 
Fix an arbitrary $0<\varepsilon_0<1$ and let $\rho_{t,N}:=\inf\left\{l\geq t,X_l^{\pi'} \notin B_N(0)\right\}$. By Assumption \ref{assume.r}, for any $\epsilon$, there exists $N_0>0$ such that
$$
\mathbb{P}\left(\rho_{t,N} \leq t+\varepsilon_0\right) \leq \frac{\epsilon}{3 A_0} \quad \forall N \geq N_0 .
$$
Then, we obtain
\begin{equation}\label{pert:rho}
\begin{aligned}
& \left|\mathbb{E}_{t,x}\left[V^{\infty}\left(t,t+\varepsilon_0, X_{t+\varepsilon_0}^{\pi'}\right)\right]-\mathbb{E}_{t,x}\left[V^{\infty}\left(t,(t+\varepsilon_0 )\wedge \rho_{t,N}, X_{(t+\varepsilon_0 )\wedge \rho_{t,N}}^{\pi'}\right)\right]\right| \\
& +\left|\mathbb{E}_{t,x}\left[\int_t^{t+\varepsilon_0} \tilde{r}\left(s-t, X_s^{\pi'},m^{\infty}_s,\pi'(s,X_s^{\pi'})\right) d s\right]-\mathbb{E}_{t,x}\left[\int_t^{(t+\varepsilon_0) \wedge \rho_{t,N}} \tilde{r}\left(s-t, X_s^{\pi'},m^{\infty}_s,\pi'(s,X_s^{\pi'})\right) d s \right]\right| \\
& \leq3A_0\mathbb{P}\left(\rho_{t,N}\leq t+\varepsilon_0\right) \leq \epsilon, \quad \forall N \geq N_0.
\end{aligned}
\end{equation}
Take any $N \in \mathbb{N}$ with $N \geq N_0$. Since $V^{\infty}(t,\cdot,\cdot) \in W_p^{1,2}\left(D_N(t,0)\right) \cap \mathcal{C}_{\alpha}^{0,1}\left(D_N(t,0)\right)$, applying the generalized Itô-Krylov formula again to $V^{\infty}\left(t,l, X_l^{\pi'}\right)$ on the interval $l \in[t, (t+\epsilon_0)\wedge\rho_{t,N}]$ gives
\begin{align*}
	&V^{\infty}(t, (t+\epsilon_0)\wedge\rho_{t,N}, X^{\pi'}_{(t+\epsilon_0)\wedge\rho_{t,N}}) - V^{\infty}(t,t,x) \\
	&= \int_{t}^{(t+\epsilon_0)\wedge\rho_{t,N}} \bigg( \partial_s V^\infty(t,l, X^{\pi'}_l)+ \tilde{b}(l,X^{\pi'}_l, m^{\infty}_l,\pi'(l,X^{\pi'}_l)) \cdot D_x V^\infty(t, l, X^{\pi'}_l) \\
    &\qquad \qquad \quad + \frac{1}{2}\operatorname{tr}\bigg((\sigma\sigma^T)(l,X^{\pi'}_l,m^{\infty}_l) D^2_x V^\infty(t,l, X^{\pi'}_l) \bigg) \bigg) dl \\
	&\quad+ \int_{t}^{(t+\epsilon_0)\wedge\rho_{t,N}} \left(D_x V^\infty(t,l, X^{\pi'}_l)\right)^T \sigma(l,X^{\pi'}_l,m^{\infty}_l) dW_l.
\end{align*}
Taking expectations on both sides of the above equality yields
\begin{align}
& \mathbb{E}_{t,x}\left[V^{\infty}(t, (t+\epsilon_0)\wedge\rho_{t,N}, X^{\pi'}_{(t+\epsilon_0)\wedge\rho_{t,N}})\right]-J^{\pi^{\infty},m^{\infty}}(t,x) \label{pert:ito}\\
&\quad+\mathbb{E}_{t,x}\left[\int_t^{(t+\varepsilon_0) \wedge \rho_{t,N}} \tilde{r}\left(s-t, X_s^{\pi'},m^{\infty}_s,\pi'(s,X_s^{\pi'})\right) d s\right] \notag\\
& =\mathbb{E}_{t,x}\bigg[\int_{t}^{(t+\epsilon_0)\wedge\rho_{t,N}} \bigg\{ \partial_s V^\infty(t,l, X^{\pi'}_l)+ \tilde{b}(l,X^{\pi'}_l, m^{\infty}_l,\pi'(l,X^{\pi'}_l)) \cdot D_x V^\infty(t, l, X^{\pi'}_l)\notag\\
    &\qquad \qquad \qquad  \qquad  \quad + \frac{1}{2}\operatorname{tr}\bigg((\sigma\sigma^T)(l,X^{\pi'}_l,m^{\infty}_l) D^2_x V^\infty(t,l, X^{\pi'}_l) \bigg) +\tilde{r}\left(l-t, X_l^{\pi'},m^{\infty}_l,\pi'(l,X_l^{\pi'})\right)\bigg\} dl\bigg]. \notag
\end{align}
Using \eqref{lim:eqhjb}, \eqref{eq:origin.thm5}, \eqref{pert:rho} and \eqref{pert:ito}, we obtain
\begin{align*}
&J^{\pi' \otimes_{t,\varepsilon_0} \pi^\infty}(t,x)-J^{\pi^{\infty},m^{\infty}}(t,x)\\&=\mathbb{E}_{t,x}\left[V^{\infty}\left(t,t+\varepsilon_0, X_{t+\varepsilon_0}^{\pi'}\right)+\int_t^{t+\epsilon_0}  \tilde{r}\left(s-t, X_s^{\pi'},m^{\infty}_s,\pi'(s,X_s^{\pi'})\right) d s\right]-J^{\pi^{\infty},m^{\infty}}(t,x)\\
&\leq \mathbb{E}_{t,x}\bigg[\int_{t}^{(t+\epsilon_0)\wedge\rho_{t,N}} \bigg\{ \partial_s V^\infty(t,l, X^{\pi'}_l)+ \tilde{b}(l,X^{\pi'}_l, m^{\infty}_l,\pi'(l,X^{\pi'}_l)) \cdot D_x V^\infty(t, l, X^{\pi'}_l) \\
    &\qquad \qquad \quad \qquad\qquad + \frac{1}{2}\operatorname{tr}\bigg((\sigma\sigma^T)(l,X^{\pi'}_l,m^{\infty}_l) D^2_x V^\infty(t,l, X^{\pi'}_l) \bigg) +\tilde{r}\left(l-t, X_l^{\pi'},m^{\infty}_l,\pi'(l,X_l^{\pi'})\right)\bigg\} dl\bigg]+\epsilon\\
&=\mathbb{E}_{t,x}\bigg[\int_{t}^{(t+\epsilon_0)\wedge\rho_{t,N}} \bigg\{-\tilde{b}(l,X^{\pi'}_l, m^{\infty}_l,\pi^{\infty}(l,X^{\pi'}_l)) \cdot D_x V^\infty(t, l, X^{\pi'}_l)-\tilde{r}\left(l-t, X_l^{\pi'},m^{\infty}_l,\pi^{\infty}(l,X_l^{\pi'})\right)  \\
    &\qquad \qquad \qquad  \qquad \qquad+ \tilde{b}(l,X^{\pi'}_l, m^{\infty}_l,\pi'(l,X^{\pi'}_l)) \cdot D_x V^\infty(t, l, X^{\pi'}_l)+\tilde{r}\left(l-t, X_l^{\pi'},m^{\infty}_l,\pi'(l,X_l^{\pi'})\right)\bigg\} dl\bigg]+\epsilon\\
    &\leq \mathbb{E}_{t,x}\bigg[\int_{t}^{(t+\epsilon_0)\wedge\rho_{t,N}}\left\{  H(t,l,X^{\pi'}_l) - L(t,l,X^{\pi'}_l)  \right\}  dl\bigg]+\epsilon\\
    &\leq \int_t^{t+\eps_0}  2C|l-t|^{\frac{\alpha}{2}} dl +\epsilon= \frac{2C}{1+\alpha/2}\eps_0^{1+\frac{\alpha}{2}}+\epsilon.
\end{align*}
By the arbitrariness of $\epsilon$, we have
$$
J^{\pi' \otimes_{t,\varepsilon_0} \pi^\infty}(t,x)-J^{\pi^{\infty},m^{\infty}}(t,x) \leq \frac{2C}{1+\alpha/2}\eps_0^{1+\frac{\alpha}{2}},
$$
and hence \eqref{eq:def.equipi} follows.

{\bf Verification of Condition (b).} It remains to show that the identity $m^\infty_s = \operatorname{law}(X^{\pi^\infty, m^\infty}_s)$ holds for any $s \in (0,T]$.
For $n\in\mathbb{N}\cup\{\infty\}$, consider the processes
\begin{equation*}
    dX^{n}_t = \tilde{b}^n(t,X^{n}_t)dt + \sigma^n(t,X^n_t)dW_t, \quad X^{n}_0 = \xi \sim \nu,
\end{equation*}
where $\tilde{b}^n(t,x) := \tilde{b}(t, x, m^n_t, \pi^n(t,x))$ and $\sigma^n(t,x) := \sigma(t, x, m^n_t)$. 

Fix an arbitrary test function $\phi \in C_c^2(\mathbb{R}^d)$ and any $s \in (0,T]$. Consider the backward PDE on $[0,s]\times\mathbb{R}^d$:
\begin{equation}\label{eq:dual_pde}
\begin{cases}
\partial_t u(t,x) + \frac{1}{2}\operatorname{tr}\left( (\sigma^\infty (\sigma^{\infty})^T)(t,x) D_x^2 u(t,x) \right) + \tilde{b}^\infty(t,x) \cdot D_x u(t,x) = 0, \\
u(s,x) = \phi(x).
\end{cases}
\end{equation}
Under Assumption \ref{assume.r}, \eqref{eq:dual_pde} admits a unique strong solution $u \in W^{1,2}_p([0,s] \times \mathbb{R}^d)$ (see, e.g., \cite[Theorem 5.2.10]{krylov2008lectures}).  By Sobolev embedding (see, e.g., \cite[Lemma 3.3]{ladyzhenskaia1968linear}), $D_x u$ is bounded and continuous.

Applying the It\^o-Krylov formula to $u(t, X^n_t)$ and taking expectations, we obtain that
\begin{align*}
    \mathbb{E}[\phi(X^n_s)] - \mathbb{E}[u(0, \xi)] &= \mathbb{E} \int_0^s \left( \partial_t u + \frac{1}{2}\operatorname{tr}(\sigma^n (\sigma^{n})^T D_x^2 u) + \tilde{b}^n \cdot D_x u \right)(t, X^n_t) dt.
\end{align*}
Substituting $\partial_t u = - \frac{1}{2}\operatorname{tr}(\sigma^\infty (\sigma^{\infty})^T D_x^2 u) - \tilde{b}^\infty \cdot D_x u$ from \eqref{eq:dual_pde} into the above equality, we have
\begin{align}
    \mathbb{E}[\phi(X^n_s)] - \mathbb{E}[u(0, \xi)] &= \mathbb{E} \int_0^s (\tilde{b}^n - \tilde{b}^\infty)(t, X^n_t) \cdot D_x u(t, X^n_t) dt \nonumber \\
    &\quad + \frac{1}{2} \mathbb{E} \int_0^s\operatorname{tr}\left( (\sigma^n (\sigma^{n})^T - \sigma^\infty (\sigma^{\infty})^T)(t, X^n_t) D_x^2 u(t, X^n_t) \right) dt. \label{eq:law_diff}
\end{align}
Let $p^n(t,y)$ be the density of $X^n_t$. Then $p^n$ is a generalized solution to the corresponding Fokker-Planck equation, which can be expressed in the following divergence form:
\begin{align}\label{eq:fp_divergence}
    &\partial_t p^n - \sum_{i=1}^d \frac{\partial}{\partial x_i} \left( \sum_{j=1}^d A^n_{ij}(t,x) \frac{\partial p^n}{\partial x_j} + a^n_i(t,x) p^n \right) = 0,\\
    &p^n(0,x) = \nu(x),\nonumber
\end{align}
where $A^n(t,x) := \frac{1}{2}(\sigma^n (\sigma^{n})^T)(t,x)$ and $a^n_i(t,x) := \sum_{j=1}^d \partial_{x_j} A^n_{ij}(t,x) - \tilde{b}^n_i(t,x)$. Under the assumption that $\sigma^n$ is uniformly Lipschitz in $x$ and $\tilde{b}^n$ is uniformly bounded, the coefficients $A^n$ and $a^n$ are uniformly bounded with respect to $n$.

First, given the initial distribution density $p_0 \in L_2(\mathbb{R}^d)$, the existence of a unique generalized solution $p^n \in V_2^{1, 1/2}([0,s] \times \mathbb{R}^d)$ for each $n$ is guaranteed by \cite[Theorem 3.5.2]{ladyzhenskaia1968linear}. Next, by applying the classical energy estimate to the divergence-form equation \eqref{eq:fp_divergence}, the uniform ellipticity of $A^n$ and the uniform boundedness of the coefficients $a^n$ (both independent of $n$) ensure that the sequence $\{p^n\}$ is uniformly bounded in $L^2([0,s] \times \mathbb{R}^d)$.
Furthermore, \cite[Theorem 3.8.1]{ladyzhenskaia1968linear} implies that $\{p^n\}$ is locally uniformly bounded in $L^\infty((0,s] \times \mathbb{R}^d)$. Combined with the interior H\"older estimates provided by \cite[Theorem 3.10.1]{ladyzhenskaia1968linear}, there exists a common exponent $\alpha > 0$ such that $\{p^n\}$ is locally uniformly H\"older continuous on $(0,s] \times \mathbb{R}^d$ with H\"older norms bounded independently of $n$.
Consequently, the sequence $\{p^n\}$ is locally uniformly equicontinuous and bounded on $(0, s] \times \mathbb{R}^d$. By the Arzel\`a-Ascoli theorem and a diagonal argument, we conclude that there exists a subsequence (still denoted by $p^n$ for simplicity) and a continuous function $p^*$ such that $p^{n} \to p^*$ locally uniformly on $(0,s] \times \mathbb{R}^d$.

Now we claim that the strong convergence $\|p^n - p^*\|_{L^1([0,s] \times \mathbb{R}^d)} \to 0$ holds. We perform a truncation argument:
\begin{equation}\label{eq:l1_split}
    \iint_{[0,s] \times \mathbb{R}^d} |p^n - p^*| dxdt = \int_0^s \int_{B_R(0)} |p^n - p^*| dxdt + \int_0^s \int_{B_R^c(0)} |p^n - p^*| dxdt.
\end{equation}
Note that
\begin{equation*}
   \int_0^s \int_{B_R^c(0)} p^n(t,x) dxdt = \int_0^s \mathbb{P}(|X^n_t| > R) dt \leq \int_0^s \frac{\mathbb{E}[|X^n_t|^2]}{R^2} dt \leq \frac{A_0 s}{R^2} \leq \frac{A_0 T}{R^2},
\end{equation*}
where we have used $\sup_{n,t\in[0,T]}\mathbb{E}[|X^n_t|^2]\leq A_0$, which follows from the uniform boundedness of the coefficients. Since $p^n \to p^*$ almost everywhere, Fatou's Lemma implies that the limit $p^*$ also satisfies
\begin{equation*}
     \int_0^s\int_{B_R^c(0)} p^*(t,x) dxdt \leq \liminf_{n\to\infty} \int_0^s \int_{B_R^c(0)} p^n(t,x) dxdt \leq \frac{A_0 T}{R^2}.
\end{equation*}
Therefore, using $|p^n - p^*| \leq p^n + p^*$, the second term in \eqref{eq:l1_split} is bounded by
\begin{equation}\label{eq:tail_error}
    \int_0^s \int_{B_R^c(0)} |p^n - p^*| dxdt \leq \frac{2A_0 T}{R^2}.
\end{equation}
By Krylov's inequality, $\{p^n\}$ is uniformly bounded in $L^q([0,s] \times \mathbb{R}^d)$ for $q = p/(p-1) > 1$. This uniform integrability, combined with the fact that $\Leb([0,s]\times B_R(0)) < \infty$, allows us to apply Vitali's Convergence Theorem (see, e.g. \cite[Theorem 4.5.4]{bogachev2007measure}) to obtain
\begin{equation}\label{eq:compact_conv}
    \lim_{n\to\infty} \int_0^s \int_{B_R(0)} |p^n - p^*| dxdt = 0.
\end{equation}
For any given $\epsilon > 0$, we first choose $R$ sufficiently large such that $\frac{2A_0 T}{R^2} < \frac{\epsilon}{2}$.
Next, for this fixed $R$, by \eqref{eq:compact_conv}, we can find an $N_\epsilon$ such that
\begin{equation*}
    \int_0^s \int_{B_R(0)} |p^n - p^*| dxdt < \frac{\epsilon}{2},\quad \forall n > N_\epsilon.
\end{equation*}
Combining these two estimates into \eqref{eq:l1_split}, we conclude that
\begin{equation*}
    \iint_{[0,s]\times\R^d} |p^n - p^*| dxdt < \frac{\epsilon}{2} + \frac{\epsilon}{2} = \epsilon,\quad \forall n > N_\epsilon.
\end{equation*}
Since $\epsilon$ is arbitrary, the claim $\lim_{n\to\infty} \|p^n - p^*\|_{L^1([0,s]\times\R^d)} = 0$ holds.

Note that
\begin{align*}
    \int_0^s\int_{\mathbb{R}^d} (\tilde{b}^n - \tilde{b}^\infty) \cdot D_x u \, p^n dy dt &= \int_0^s\int_{\mathbb{R}^d} (\tilde{b}^n - \tilde{b}^\infty) \cdot D_x u \, p^* dy dt \\
    &\quad + \int_0^s\int_{\mathbb{R}^d} (\tilde{b}^n - \tilde{b}^\infty) \cdot D_x u \, (p^n - p^*) dy dt.
\end{align*}
For the first term, since $D_x u$ is bounded and $p^* \in L^1$, the product $D_x u \cdot p^* \in L^1$. From \eqref{eq:weakstar}, the weak-$*$ convergence $(\tilde{b}^n - \tilde{b}^\infty) \rightharpoonup^* 0$ in $L^\infty$ implies this term converges to $0$.
For the second term, since $\tilde{b}^n - \tilde{b}^\infty$ and $D_x u$ are uniformly bounded, it is controlled by $C \|p^n - p^*\|_{L^1}$, which converges to $0$ by the strong convergence of $p^n$.

For the diffusion difference term, the Lipschitz continuity of $\sigma$ in $m$ and $d(m^n, m^\infty) \to 0$ imply $\|\sigma^n \sigma^{n,T} - \sigma^\infty \sigma^{\infty,T}\|_{\Cc^0([0,s]\times\R^d)} \to 0$. By H\"older's inequality, since $D_x^2 u \in L^p$ and $p^n$ is bounded in $L^q$,
\begin{align*}
    &\left| \frac{1}{2} \int_0^s \mathbb{E}\left[ \operatorname{tr}\big( (\sigma^n \sigma^{n,T} - \sigma^\infty \sigma^{\infty,T}) D_x^2 u \big) \right] dt \right| \\&\qquad\leq C \|\sigma^n \sigma^{n,T} - \sigma^\infty \sigma^{\infty,T}\|_{\Cc^0([0,s]\times\R^d)}\|D_x^2 u\|_{L^p([0,s]\times\R^d))} \sup_n \|p^n\|_{L^q([0,s]\times\R^d))} \to 0.
\end{align*}
Consequently, along the subsequence, we obtain
\begin{equation*}
    \lim_{n\to\infty} \mathbb{E}[\phi(X^n_s)] = \mathbb{E}[u(0, \xi)].
\end{equation*}
Note that applying the It\^o-Krylov formula to $u(t, X^\infty_t)$  directly yields $\mathbb{E}[\phi(X^\infty_s)] = \mathbb{E}[u(0, \xi)]$. 
Consequently, we obtain $\lim_{n\to\infty} \mathbb{E}[\phi(X^n_s)] = \mathbb{E}[\phi(X^\infty_s)]$ for any $\phi \in C_c^2(\mathbb{R}^d)$. Moreover, as $m^n$ converges to $m^\infty$ in $(\mathscr{M}, d)$, it follows that $\mathbb{E}[\phi(X^n_s)] = \int_{\mathbb{R}^d} \phi(x) m^n_s(dx) \to \int_{\mathbb{R}^d} \phi(x) m^\infty_s(dx)$. By the uniqueness of the limit, we have
\begin{equation*}
    \int_{\mathbb{R}^d} \phi(x) m^\infty_s(dx) = \int_{\mathbb{R}^d} \phi(x) \operatorname{law}(X^\infty_s)(dx), \quad \forall \phi \in C_c^2(\mathbb{R}^d),
\end{equation*}
which implies that $m^\infty_s = \operatorname{law}(X^\infty_s)$ for all $s \in [0,T]$.
\end{proof}




\section{Convergence Analysis of the PIA for Regularized Equilibria}\label{sect:pia}

This section introduces a Policy Iteration Algorithm (PIA) for the entropy-regularized formulation and rigorously establishes its convergence to a regularized equilibrium. We first present the PIA as follows:

\begin{itemize}
	\item \textbf{Initialization:} Start with an initial guess $(J^0, m^0) \in \mathcal{M}_\lambda \times \mathscr{M}_{\nu}^{\kappa,C^*}$.

	\item \textbf{Iteration Step $k \geq 0$:}
	\begin{enumerate}
		\item \textit{Policy Update:} Given the current iterates $(J^k, m^k)$, the updated policy $\pi^{k+1}$ is computed as:
		\begin{equation*}
			\pi^{k+1}(t,x,a) = \Gamma_\lambda(t, x, D_x J^{k}(t,x), m^{k}_t, a), \quad \forall (t, x, a) \in [0, T] \times \mathbb{R}^d \times U.
		\end{equation*}

        \item {{\it Policy Evaluation:} Compute the updated auxiliary value function $V^{k+1}(t,s,x):=V^{\pi^{k+1}, m^k}(t,s,x)$ with $J^{k+1}(t,x):= V^{k+1}(t,t,x)$, and the updated measure flow $m^{k+1}:= \mu^{\pi^{k+1}, m^k}$.}
	\end{enumerate}
\end{itemize}
Note that the PIA can be compactly described by the recursive iteration using the operator $\Phi_\lambda$ in \eqref{eq:Phi.fixedpoint}
\begin{equation*}
	(J^{k+1}, m^{k+1}) = \Phi_\lambda(J^k, m^k). 
\end{equation*}

\begin{Assumption}\label{assume.PIA}
    $\sigma$ is continuously differentiable with respect to the spatial variable $x$. Moreover, the terminal cost $F$ and its spatial gradient $D_x F$ are Lipschitz continuous in $m$ uniformly with respect to the other arguments, i.e., there exists a constant $K_6 > 0$ such that
    \begin{equation*}
        |F(t,x,m) - F(t,x,\rho)| + |D_x F(t,x,m) - D_x F(t,x,\rho)| \leq K_6 \mathcal{W}_2(m,\rho),
    \end{equation*}
    for all $(t,x) \in[0,T] \times \mathbb{R}^d$ and $m, \rho \in \mathscr{P}_2(\mathbb{R}^d)$.
\end{Assumption}

\begin{Theorem}\label{thm:PIA_convergence}
    Under the conditions of Theorem \ref{thm:existence_regu} (ii), suppose further that Assumption \ref{assume.PIA} holds. There exist constants $T_0 > 0$ and $\bar{K}_6 > 0$ such that, {if the time horizon $T \leq T_0$ and $K_6 \leq \bar{K}_6$}, then for any  $0 < \lambda \leq \lambda_0$, the sequence $(J^{k}, m^{k})$ generated by the PIA converges to a unique limit $(J^*_\lambda, m^*_\lambda)$.
    Moreover, the corresponding limit pair $(\pi^*_\lambda, m^*_\lambda)$ constitutes a regularized equilibrium with entropy weight $\lambda$, where $\pi^*_\lambda$ is explicitly given by the Gibbs measure:
    \begin{equation*}
        \pi^*_\lambda(t,x,a) = \Gamma_{\lambda}(t,x,D_x J^*(t,x),m^*_{\lambda,t},a), \quad \forall (t, x, a) \in [0, T] \times \mathbb{R}^d \times U.
    \end{equation*}
\end{Theorem}
\begin{Remark}\label{rmk:PIA_assumptions}
    We conclude this section with a discussion of the short time horizon assumption ($T \leq T_0$) and the weak terminal dependence assumption ($K_6 \leq \bar{K}_6$) for the convergence of the PIA.
    \begin{itemize}
        \item First, regarding the terminal condition, to the best of our knowledge, the existing literature on the convergence of PIA for time-consistent MFGs (e.g., \cite{TangSong2024,camilli2022rates,cacace2021policy,lauriere2023policy}) does not consider dependence of the terminal cost $F$ on $m$, which corresponds to the case $K_6 = 0$ in our framework. Our result accommodates this terminal measure dependence under a weak interaction regime.

        \item Second, regarding the short-time-horizon assumption, we note that several recent studies (e.g., \cite{cacace2021policy, TangSong2024}) have successfully bypassed the small-time restriction in the context of time-consistent MFGs, mainly for systems with \textit{separable} Hamiltonians and constant diffusion terms. Their methodology typically relies on a compactness argument to extract a convergent subsequence, while convergence of the entire PIA sequence is subsequently guaranteed by the \textit{uniqueness} of the mean field equilibrium (often ensured by the Lasry-Lions monotonicity condition or a potential game structure). However, uniqueness of equilibria for time-inconsistent control problems, let alone time-inconsistent MFGs, remains a challenging open problem. Consequently, extending such compactness arguments to our time-inconsistent setting is fundamentally hindered by the absence of a suitable uniqueness theory.

         \item Finally, for time-consistent MFGs with general \textit{non-separable} Hamiltonians, existing results (such as \cite{lauriere2023policy}) also rely on contraction mapping arguments, which inherently require a short-time assumption to guarantee convergence. From this perspective, the contraction conditions established in our work are consistent with the existing literature, while extending the PIA framework to the time-inconsistent setting. Furthermore, our framework imposes weaker restrictions on the underlying SDE coefficients; for instance, while the aforementioned works often restrict the diffusion term $\sigma$ to be a constant matrix, our analysis allows $\sigma(t,x,m)$ to be fully state and measure-dependent.
    \end{itemize}
\end{Remark}

\subsection{Proof of Theorem \ref{thm:PIA_convergence}}
From Lemma \ref{lm:iteration.estimateentropy} and Lemma \ref{lm:Phi.continuous}, we conclude that $(J^k,m^k)\in \mathcal{M}_\lambda \times \mathscr{M}_{\nu}^{\kappa,C^*}$ and $V^{k+1}$ is the unique classical solution to the following linear parabolic PDE:
\begin{align}\label{eq:iteraPDEk}
		\partial_s V^{k+1} (t,s, x) &+ \frac{1}{2}\operatorname{tr}\left((\sigma\sigma^T)(s,x, m^k_s) D^2_x V^{k+1} (t,s, x)\right)\notag \\
		&+ \tilde{b}(s,x, m^k_s, \pi^{k+1}(s,x)) \cdot \left[ D_x V^{k+1}(t,s, x) - \delta(s-t)D_x V^{k}(s,s,x) \right]\notag \\
		&+ \tilde{r}(s-t,x, m^k_s,\pi^{k+1}(s,x)) - \delta(s-t)\tilde{r}(0,x,m^k_s,\pi^{k+1}(s,x))\notag \\
		&+ \delta(s-t)H(s,x, D_x V^{k}(s,s,x), m^k_s) = 0, \\
		V^{k+1} (t,T, x) &= F(t,x,m^k_T),\notag
\end{align}
satisfying the uniform bounds
\begin{equation}\label{eq:iteraPDEbound}
  \sup_{k \in \mathbb{N}} \|V^{k}\|_{\widetilde{\Cc}^{0, 1}_{\alpha,[0, T]}} < A^*,\quad    \sup_{k \in \mathbb{N}} \|V^{k}\|_{\widetilde{\Cc}^{1, 2}_{\alpha,[0, T]}} < A_{\lambda},
\end{equation}
where the constants $A^*, A_\lambda$ are stated in Theorem \ref{thm:existence_regu} Part (ii), and $H$ is defined by
\begin{equation*}
    H(s,x,p,m) := \lambda\ln\left(\int_U \exp\left( \frac{1}{\lambda}\left[b(s,x,m,a')\cdot p+r(0,x,m,a') \right]\right) da'\right).
\end{equation*}
\begin{Lemma}\label{lm:derivatives_bounds}
   Suppose that Assumption \ref{assume.r} holds. Then, the functions $\Gamma_{\lambda}$ and $H$ are differentiable with respect to $p$, and their derivatives satisfy the uniform bounds:
    \begin{align}
        |D_p \Gamma_{\lambda}(t,x,p,m,a)| &\leq \frac{2K_1}{\lambda}\Gamma_{\lambda}(t,x,p,m,a), \label{est:Gamma}\\
        |D_p H(t,x,p,m)| &\leq K_1, \label{est:H}
    \end{align}
    where $K_1$ is the uniform bound in Assumption \ref{assume.r}.  Moreover, $\Gamma_\lambda$ is Lipschitz continuous with respect to $(x,m)$ in the following sense: for any bounded measurable function $\phi: U \rightarrow \mathbb{R}^d$, it holds that
    \begin{align}\label{est:Gamma_m}
        &\left| \int_{U} \phi(a) \big( \Gamma_{\lambda}(t,x,p,m,a) - \Gamma_{\lambda}(t,y,p,\rho,a) \big) da \right|\nonumber \\&\qquad \leq \|\phi\|_{L^{\infty}(U)} \frac{2K_2}{\lambda} (|p|+1) \exp\left(\frac{2K_1}{\lambda}(|p|+1)\right) (|x-y|+\mathcal{W}_2(m,\rho)),
    \end{align}
    where $K_2$ is the Lipschitz constant  in Assumption \ref{assume.r}. Furthermore, $H$ is Lipschitz continuous with respect to the measure $m$ such that
    \begin{equation}\label{est:H_m}
        |H(t,x,p,m)-H(t,x,p,\rho)|\leq K_2(|p|+1)\mathcal{W}_2(m,\rho).
    \end{equation}
\end{Lemma}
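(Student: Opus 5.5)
The plan is to differentiate the explicit Gibbs formula directly and control every derivative by an average against the Gibbs density itself. Write
$$g(a):=\tfrac1\lambda\big[b(t,x,m,a)\cdot p+r(0,x,m,a)\big],\qquad Z:=\int_U e^{g(a')}da'.$$
Then $\Gamma_\lambda=e^{g}/Z$ and $H=\lambda\ln Z$. Since $U$ is bounded and $b$ is bounded by $K_1$, differentiation under the integral sign is justified. For the $p$-derivatives we get
$$D_p\Gamma_\lambda(a)=\tfrac1\lambda\Gamma_\lambda(a)\Big(b(a)-\int_U b(a')\Gamma_\lambda(a')da'\Big).$$
Because $|b|\le K_1$, the bracket has norm at most $2K_1$, which gives \eqref{est:Gamma}. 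For $H$ we get $D_pH=\int_U b(a')\Gamma_\lambda(a')da'$. This is an average of $b$ against a probability density, so its norm is at most $K_1$, which is \eqref{est:H}.

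For \eqref{est:Gamma_m}, fix $p$ and two parameter pairs $(x,m)$ and $(y,\rho)$, with exponents $g_1,g_2$ and densities $\Gamma^1,\Gamma^2$. By Assumption \ref{assume.r}(iii)--(iv),
$$|g_1(a)-g_2(a)|\le \tfrac{K_2}{\lambda}(|p|+1)\big(|x-y|+\mathcal W_2(m,\rho)\big)=:\Delta \quad\text{for every }a\in U.$$
I would first use the crude bound
$$|\Gamma^1(a)-\Gamma^2(a)|\le \Gamma^2(a)\Big|e^{(g_1-g_2)(a)}\tfrac{Z_2}{Z_1}-1\Big|.$$
Since $Z_2/Z_1\in[e^{-\Delta},e^{\Delta}]$, the factor inside the absolute value lies in $[e^{-2\Delta},e^{2\Delta}]$, so
$$|\Gamma^1(a)-\Gamma^2(a)|\le \Gamma^2(a)\big(e^{2\Delta}-1\big)\le \Gamma^2(a)\,2\Delta e^{2\Delta}.$$
Integrating against $\phi$ then gives $\|\phi\|_\infty\,2\Delta e^{2\Delta}$.

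The main obstacle is that $\Delta$ is not a priori small, while the claimed prefactor is $\exp\big(\tfrac{2K_1}{\lambda}(|p|+1)\big)$, which does not depend on $|x-y|$ or $\mathcal W_2$. To get this form I would interpolate instead. Set $g_\theta=\theta g_1+(1-\theta)g_2$ and $\Gamma^\theta=e^{g_\theta}/\int_U e^{g_\theta}$. Then
$$\frac{d}{d\theta}\int_U\phi\,\Gamma^\theta=\int_U \phi\,\Gamma^\theta\Big((g_1-g_2)-\int_U(g_1-g_2)\Gamma^\theta\Big),$$
whose absolute value is at most $2\|\phi\|_\infty\Delta$. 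Integrating over $\theta\in[0,1]$ gives the bound $2\|\phi\|_\infty\Delta$ with no exponential factor at all. This is stronger than \eqref{est:Gamma_m}, since the stated exponential factor is at least $1$. The bound holds uniformly because $b$ and $r$ are Lipschitz in $(x,m)$ uniformly in $a$.

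Finally, \eqref{est:H_m} follows the same way with $y=x$. We have
$$H(m)-H(\rho)=\lambda\ln\frac{Z_1}{Z_2}=\lambda\ln\int_U e^{g_1-g_2}\Gamma^2\,da\in[-\lambda\Delta,\lambda\Delta],$$
and $\lambda\Delta=K_2(|p|+1)\mathcal W_2(m,\rho)$. Equivalently, by the mean value theorem, $\frac{d}{d\theta}\lambda\ln\int_U e^{g_\theta}=\lambda\int_U(g_1-g_2)\Gamma^\theta$, which is bounded in absolute value by $\lambda\Delta$.
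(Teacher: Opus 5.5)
Your proof is correct. For \eqref{est:Gamma}, \eqref{est:H} and \eqref{est:H_m} it matches the paper's. The paper only says the first two follow from straightforward differentiation, and for \eqref{est:H_m} it writes exactly your identity $H(t,x,p,m)-H(t,x,p,\rho)=\lambda\ln\int_U e^{g_1-g_2}\Gamma^2\,da$.

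The genuine difference is in \eqref{est:Gamma_m}. The paper does not interpolate. It uses the algebraic splitting
\[
\Gamma^1(a)-\Gamma^2(a)=\Gamma^1(a)\int_U\bigl(1-e^{(g_1-g_2)(a')}\bigr)\Gamma^2(a')\,da'+\bigl(e^{(g_1-g_2)(a)}-1\bigr)\Gamma^2(a)
\]
and applies $|e^z-1|\le |z|e^{|z|}$ to each term. In the linear factor it bounds $|z|$ by your $\Delta$, but inside the exponential it uses $|g_1|+|g_2|\le \frac{2K_1}{\lambda}(|p|+1)$. It then concludes with $\int_U(\Gamma^1+\Gamma^2)\,da=2$.

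So the obstacle you flagged is handled there simply by letting $|z|$ play two roles. Your one-line crude bound, capped the same way, would only give exponent $\frac{4K_1}{\lambda}(|p|+1)$, because $e^{g_1-g_2}$ and $Z_2/Z_1$ each contribute; the two-term split avoids this doubling.

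Your interpolation along $g_\theta$ is cleaner and gives the strictly sharper bound $2\|\phi\|_{L^\infty(U)}\Delta$. That is a total-variation Lipschitz constant of order $(|p|+1)/\lambda$ with no factor $e^{C(|p|+1)/\lambda}$.

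The extra exponential in the paper is harmless for its purposes. Estimate \eqref{est:Gamma_m} is only used in Lemmas \ref{lm:measure_contraction} and \ref{lm:V_contraction}, with $|p|\le A^*$ and fixed $\lambda$, where the constants already depend badly on $\lambda$ through $A_\lambda$. The paper's route buys only that it is purely algebraic and avoids differentiating the normalizing constant along a path.
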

\begin{proof}
Estimates \eqref{est:Gamma} and \eqref{est:H} follow from straightforward differentiation. We only present the proof of \eqref{est:Gamma_m} and \eqref{est:H_m}.

Let $g(t,x,m,p,a) := \frac{1}{\lambda}[b(t,x,m,a) \cdot p + r(0,x,m,a)]$. By Assumption \ref{assume.r}, for any $m, \rho \in \mathscr{P}_2(\mathbb{R}^d)$ and $(x,y)\in \R^d\times\R^d$, we have
\begin{align*}
    |g(t,x,m,p,a) - g(t,y,\rho,p,a)| &\leq \frac{K_2}{\lambda}(|p|+1) \left[|x-y|+\mathcal{W}_2(m, \rho)\right], \\
    |g(t,x,m,p,a)| &\leq \frac{K_1}{\lambda}(|p|+1).
\end{align*}
Direct computation yields
\begin{align*}
    &|\Gamma_{\lambda}(t,x,p,m, a) - \Gamma_{\lambda}(t,y,p,\rho, a)| \\
    &= \left| \frac{\exp[g(t,x,m,p,a)] \int_U \exp[g(t,y,\rho,p,a')]da' - \exp[g(t,y,\rho,p,a)] \int_U \exp[g(t,x,m,p,a')]da'}{\left(\int_U \exp[g(t,x,m,p,a')]da'\right) \left(\int_U \exp[g(t,y,\rho,p,a')]da'\right)} \right| \\
    &= \left| \frac{\exp[g(t,x,m,p,a)] \left( \int_U \exp[g(t,y,\rho,p,a')]da' - \int_U \exp[g(t,x,m,p,a')]da' \right)}{\left(\int_U \exp[g(t,x,m,p,a')]da'\right) \left(\int_U \exp[g(t,y,\rho,p,a')]da'\right)} \right. \\
    &\quad + \left. \frac{\left( \exp[g(t,x,m,p,a)] - \exp[g(t,y,\rho,p,a)] \right) \int_U \exp[g(t,x,m,p,a')]da'}{\left(\int_U \exp[g(t,x,m,p,a')]da'\right) \left(\int_U \exp[g(t,y,\rho,p,a')]da'\right)} \right| \\
    &\leq \Gamma_{\lambda}(t,x,p,m, a) \left| \int_U \frac{\exp[g(t,y,\rho,p,a')] - \exp[g(t,x,m,p,a')]}{\int_U \exp[g(t,y,\rho,p,a'')]da''} da' \right| \\
    &\quad + \frac{\left| \exp[g(t,x,m,p,a)] - \exp[g(t,y,\rho,p,a)] \right|}{\int_U \exp[g(t,y,\rho,p,a')]da'} \\
    &= \Gamma_{\lambda}(t,x,p,m, a) \left| \int_U \left( 1 - \exp[g(t,x,m,p,a') - g(t,y,\rho,p,a')] \right) \Gamma_{\lambda}(t,y,p,\rho, a') da' \right| \\
    &\quad + \left| \exp[g(t,x,m,p,a) - g(t,y,\rho,p,a)] - 1 \right| \Gamma_{\lambda}(t,y,p,\rho, a).
\end{align*}
Using the inequality $|e^z - 1| \leq |z| e^{ |z|}$ and noting $\int_U \Gamma_{\lambda}(t,y,p,\rho, a') da' = 1$, we obtain
\begin{align*}
    &|\Gamma_{\lambda}(t,x,p,m, a) - \Gamma_{\lambda}(t,y,p,\rho, a)| \\
    &\leq \left( \frac{K_2}{\lambda}(|p|+1) \left[|x-y|+\mathcal{W}_2(m, \rho)\right] \exp\left[\frac{2K_1}{\lambda}(|p|+1)\right] \right) \big( \Gamma_{\lambda}(t,x,p,m, a) + \Gamma_{\lambda}(t,y,p,\rho, a) \big).
\end{align*}
Finally, integrating the above against $\phi(a)$ over $U$ yields \eqref{est:Gamma_m}:
\begin{align*}
    &\left| \int_U \phi(a) \big( \Gamma_{\lambda}(t,x,p,m, a) - \Gamma_{\lambda}(t,x,p,\rho, a) \big) da \right| \\
    &\leq \|\phi\|_{L^\infty(U)} \left( \frac{K_2}{\lambda}(|p|+1) e^{\frac{2K_1}{\lambda}(|p|+1)} \left[|x-y|+\mathcal{W}_2(m, \rho)\right] \right) \int_U \big( \Gamma_{\lambda}(t,x,p,m, a) + \Gamma_{\lambda}(t,x,p,\rho, a) \big) da \\
    &= 2 \|\phi\|_{L^\infty(U)} \frac{K_2}{\lambda}(|p|+1) \exp\left[\frac{2K_1}{\lambda}(|p|+1)\right] \left[|x-y|+\mathcal{W}_2(m, \rho)\right].
\end{align*}
We now prove \eqref{est:H_m}. By the definition of $H(t,x,p,m)$, we have:
\begin{align*}
    &|H(t,x,p,m) - H(t,x,p,\rho)|
    = \lambda \left| \ln\left( \frac{\int_U \exp[g(t,x,m,p,a')] da'}{\int_U \exp[g(t,x,\rho,p,a')] da'} \right) \right| \\
    =& \lambda \left| \ln\left( \int_U \exp[g(t,x,m,p,a') - g(t,x,\rho,p,a')] \Gamma_{\lambda}(t,x,p,\rho, a') da' \right) \right|.
\end{align*}
Then,
\begin{align*}
    |H(t,x,p,m) - H(t,x,p,\rho)|
    &\leq K_2(|p|+1) \mathcal{W}_2(m, \rho),
\end{align*}
which establishes \eqref{est:H_m} and completes the proof.
\end{proof}
Let $(\pi^*_{\lambda},m^*_{\lambda})$ be a regularized equilibrium with entropy weight $\lambda$, whose existence is ensured by Theorem \ref{thm:existence_regu}. Denote $V^*(t,s,x)=V^{\pi^*_{\lambda},m^*_{\lambda}}_{\lambda}(t,s,x)$. Then $(V^*,\pi^*_{\lambda},m^*_{\lambda})$ satisfies \eqref{eq:HJBn1}-\eqref{eq:consisn}, with $V^n$, $\pi^n$, $m^n$, and $\lambda_n$ replaced by $V^*$, $\pi^*_{\lambda}$, $m^*_{\lambda}$, and $\lambda$, respectively.
\begin{Lemma}\label{lm:measure_contraction}
    Suppose Assumption \ref{assume.r} holds. Then, there exists a non-decreasing function $C_1(T)$ satisfying $\lim_{T\to 0} C_1(T) = 0$ such that
    \begin{equation*}
        d(m^{k+1}, m^*_{\lambda}) \leq C_1(T) \left[ \sup_{(s,x)\in[0,T]\times\mathbb{R}^d} |D_x V^k(s,s,x) - D_x V^{*}(s,s,x)| + d(m^k, m^*_{\lambda}) \right].
    \end{equation*}
\end{Lemma}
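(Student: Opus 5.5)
We couple the two state processes synchronously. Let $\xi\sim\nu$ be the common initial condition and $W$ the common Brownian motion. Let $X^{k+1}:=X^{\pi^{k+1},m^k}$ and $X^*:=X^{\pi^*_\lambda,m^*_\lambda}$ solve \eqref{mupim} with $(\pi,m)$ equal to $(\pi^{k+1},m^k)$ and $(\pi^*_\lambda,m^*_\lambda)$ respectively. By \eqref{ineq:w2},
\[
d^2(m^{k+1},m^*_\lambda)\leq \sup_{t\leq T}\mathbb{E}|X^{k+1}_t-X^*_t|^2,
\]
since $m^*_\lambda=\mu^{\pi^*_\lambda,m^*_\lambda}$ by the consistency condition. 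We therefore need an $L^2$ stability estimate whose constant vanishes as $T\to0$.

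\textbf{Drift and diffusion increments.} We split the drift difference as
\[
\tilde b(t,X^{k+1}_t,m^k_t,\pi^{k+1}(t,X^{k+1}_t))-\tilde b(t,X^*_t,m^*_t,\pi^*(t,X^*_t))
\]
into three pieces. The first is the change in $(x,m)$ inside $b$ with the policy frozen, which Assumption \ref{assume.r}(iii),(iv) bounds by $K_2(|X^{k+1}_t-X^*_t|+\mathcal W_2(m^k_t,m^*_t))$. The second is the change of the Gibbs density in $(x,m)$ at fixed $p=D_xJ^k(t,X^{k+1}_t)$. We apply \eqref{est:Gamma_m} with $\phi=b$. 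Here $|p|\le A^*$ uniformly, since $J^k\in\mathcal M_\lambda$ and $\|J^k\|_{\Cc^{0,1}_\alpha}\le A^*$. This gives a bound $C_\lambda(|X^{k+1}_t-X^*_t|+\mathcal W_2(m^k_t,m^*_t))$. The third is the change in $p$, from $D_xJ^k$ to $D_xV^*(t,t,\cdot)$, evaluated at the same point. By \eqref{est:Gamma} and the mean value theorem, it is bounded by $\frac{2K_1^2}{\lambda}\,\mathrm{Leb}$-independent constants times $\sup|D_xV^k(s,s,x)-D_xV^*(s,s,x)|$, since $\int_U|D_p\Gamma_\lambda|\,da\le 2K_1/\lambda$. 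Here we use $D_xJ^k(t,x)=D_xV^k(t,t,x)$ in the PIA notation.

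One point needs care: the spatial change $X^{k+1}$ versus $X^*$ inside $p$. Rather than requiring Lipschitz continuity of $D_xJ^k$ in $x$, we evaluate both policies at the same point $X^*_t$ and handle $\pi^{k+1}(t,X^{k+1}_t)$ versus $\pi^{k+1}(t,X^*_t)$ via Lemma \ref{lm:derivatives_bounds} together with the bound $\|D^2_xJ^k\|_\infty\le A_\lambda$. This makes $x\mapsto\tilde b(t,x,m^k_t,\pi^{k+1}(t,x))$ Lipschitz with constant $C_\lambda$, and the same argument was used in Lemma \ref{lm:Phi.continuous}. The diffusion difference is bounded by $K_2(|X^{k+1}_t-X^*_t|+\mathcal W_2(m^k_t,m^*_t))$.

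\textbf{Gronwall step.} Writing $\Delta_t=\mathbb{E}\sup_{r\le t}|X^{k+1}_r-X^*_r|^2$, BDG and Cauchy--Schwarz give
\[
\Delta_t\le C_\lambda(T+1)\int_0^t\Delta_r\,dr + C_\lambda (T+1)T\big(\sup|D_xV^k-D_xV^*|^2+d^2(m^k,m^*_\lambda)\big).
\]
Gronwall then yields $\Delta_T\le C_\lambda(T+1)T e^{C_\lambda(T+1)T}(\cdots)$. Taking square roots and using $\sqrt{a^2+b^2}\le a+b$, we obtain the claim with
\[
C_1(T)=\sqrt{C_\lambda(1+T)T}\,e^{C_\lambda(1+T)T/2},
\]
which is non-decreasing in $T$ and tends to $0$ as $T\to0$.

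\textbf{Main obstacle.} The delicate point is the spatial Lipschitz regularity of the feedback drift. The constant in \eqref{est:Gamma_m} already involves $|p|\le A^*$, but the $x$-dependence of $p=D_xJ^k(t,x)$ requires the $\lambda$-dependent bound $A_\lambda$ on $D^2_x$. This is acceptable because $C_1$ may depend on $\lambda$, and the uniform bounds \eqref{eq:iteraPDEbound} hold for every $k$. We must also ensure these constants do not blow up as $T$ shrinks. That holds because $A^*$ and $A_\lambda$ can be taken non-increasing in $T$ on $T\le T_0$ (fix them at the horizon $T_0$), so $C_\lambda$ is independent of $T\le T_0$.
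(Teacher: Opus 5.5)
Your proposal is correct and follows essentially the same route as the paper. Both arguments use synchronous coupling with the bound $d^2\le \mathbb{E}|\cdot|^2$ from \eqref{ineq:w2}, a three-way splitting of the drift via Lemma~\ref{lm:derivatives_bounds} with the $A_\lambda$ bound on $D_x^2V^k$ absorbing the spatial shift inside the gradient, and BDG plus Gronwall to get a constant of order $\sqrt{T}$. Your extra observation, that $C_\lambda$ (through $A^*$ and $A_\lambda$) can be taken uniform in $T\le T_0$, usefully makes explicit a monotonicity point the paper leaves implicit.
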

\begin{proof}
    Note that
    \begin{equation*}
        d^2(m^{k+1}, m^*_{\lambda}) = \sup_{t \in [0,T]} \mathcal{W}_2^2(m^{k+1}_t,m^*_{\lambda,t}) \leq \mathbb{E} \left[ \sup_{t \in [0,T]} |\tilde{X}^{k+1}_t - \tilde{X}^*_t|^2 \right],
    \end{equation*}
   where
    \begin{align*}
        d\tilde{X}^{k+1}_t &= \tilde{b}(t, \tilde{X}^{k+1}_t, m^k_t, \pi^{k+1}(t, \tilde{X}^{k+1}_t)) dt + \sigma(t, \tilde{X}^{k+1}_t,m^k_t) dW_t, \\
        d\tilde{X}^{*}_t &= \tilde{b}(t, \tilde{X}^{*}_t, m^{*}_{\lambda,t}, \pi^{*}_{\lambda}(t, \tilde{X}^{*}_t)) dt + \sigma(t, \tilde{X}^*_t,m^*_{\lambda,t}) dW_t,\\
        \tilde{X}^{k+1}_0 &= \tilde{X}^{*}_0 = \xi \sim \nu.
    \end{align*}
    Using the BDG inequality, there exists a constant $A_0 > 0$ such that
    \begin{align}\label{eq:dx_gronwall_base}
        \mathbb{E} \left[ \sup_{s \in[0,t]} |\tilde{X}^{k+1}_s-\tilde{X}^{*}_s|^2 \right] &\leq A_0 \mathbb{E} \bigg[ \int_0^t \left| \tilde{b}(s, \tilde{X}^{k+1}_s, m^k_s, \pi^{k+1}(s, \tilde{X}^{k+1}_s))- \tilde{b}(s, \tilde{X}^{*}_s, m^{*}_{\lambda,s}, \pi^{*}_{\lambda}(s, \tilde{X}^{*}_s)) \right|^2 ds \notag \\
        &\qquad\qquad + \int_0^t \left| \sigma(s, \tilde{X}^{k+1}_s,m^k_s) - \sigma(s, \tilde{X}^*_s,m^*_{\lambda,s}) \right|^2 ds \bigg].
    \end{align}
    For the drift term, we decompose the difference into three parts:
    \begin{align}
        &\left| \tilde{b}(s, \tilde{X}^{k+1}_s, m^k_s, \pi^{k+1}(s, \tilde{X}^{k+1}_s))- \tilde{b}(s, \tilde{X}^{*}_s, m^{*}_{\lambda,s}, \pi^{*}_{\lambda}(s, \tilde{X}^{*}_s)) \right|\nonumber \\
        &\leq |\tilde{b}(s, \tilde{X}^{k+1}_s, m^k_s, \pi^{k+1}(s, \tilde{X}^{k+1}_s)) - \tilde{b}(s, \tilde{X}^*_s, m^k_s, \pi^{k+1}(s, \tilde{X}^{k+1}_s))|\nonumber \\
        &\quad+ |\tilde{b}(s, \tilde{X}^*_s, m^k_s,\pi^{k+1}(s, \tilde{X}^{k+1}_s)) - \tilde{b}(s, \tilde{X}^*_s, m^{*}_{\lambda,s}, \pi^{k+1}(s, \tilde{X}^{k+1}_s))|\nonumber \\
        &\quad + |\tilde{b}(s, \tilde{X}^*_s, m^{*}_{\lambda,s}, \pi^{k+1}(s, \tilde{X}^{k+1}_s)) - \tilde{b}(s, \tilde{X}^*_s, m^{*}_{\lambda,s}, \pi^{*}_{\lambda}(s, \tilde{X}^{*}_s))|.\label{est:tb}
    \end{align}
    By the  Lipschitz property of $b$ in Assumption \ref{assume.r}, the first two terms are bounded by $K_2 |\tilde{X}^{k+1}_s - \tilde{X}^{*}_s|$ and $K_2 \mathcal{W}_2(m^k_s, m^{*}_{\lambda,s})$, respectively. For the third term, using the regularity of $\Gamma_{\lambda}$ established in Lemma \ref{lm:derivatives_bounds}, we have
    \begin{align*}
        &\left| \tilde{b}(s, \tilde{X}^*_s, m^{*}_{\lambda,s}, \pi^{k+1}(s, \tilde{X}^{k+1}_s)) - \tilde{b}(s, \tilde{X}^*_s, m^{*}_{\lambda,s}, \pi^*(s, \tilde{X}^{*}_s)) \right| \\
        &\leq K_1 \int_U \left| \Gamma_\lambda(s, \tilde{X}^{k+1}_s, D_x V^k(s,s,\tilde{X}^{k+1}_s), m^k_s, a) - \Gamma_\lambda(s, \tilde{X}^*_s, D_x V^{*}(s,s,\tilde{X}^*_s), m^{*}_{\lambda,s}, a) \right| da \\
        &\leq C_\lambda \left( |\tilde{X}^{k+1}_s - \tilde{X}^*_s| + |D_x V^k(s,s,\tilde{X}^{k+1}_s) - D_x V^{*}(s,s,\tilde{X}^*_s)| + \mathcal{W}_2(m^k_s, m^{*}_{\lambda,s}) \right),
    \end{align*}
    where $C_\lambda$ is a finite constant depending on $\lambda$, $K_1$, $K_2$, and the first uniform bound $A^*$ in \eqref{eq:iteraPDEbound}. Noting the second bound in \eqref{eq:iteraPDEbound}, we further expand the gradient difference:
    \begin{align*}
        |D_x V^k(s,s,\tilde{X}^{k+1}_s) - D_x V^{*}(s,s,\tilde{X}^*_s)| &\leq |D_x V^k(s,s,\tilde{X}^{k+1}_s) - D_x V^k(s,s,\tilde{X}^*_s)| \\
        &\quad + |D_x V^k(s,s,\tilde{X}^*_s) - D_x V^{*}(s,s,\tilde{X}^*_s)| \\
        &\leq A_\lambda |\tilde{X}^{k+1}_s - \tilde{X}^*_s| + \sup_{x \in \mathbb{R}^d} |D_x V^k(s,s,x) - D_x V^{*}(s,s,x)|.
    \end{align*}
    For the diffusion term, Assumption \ref{assume.r} (iii) and (iv) together yield that $|\sigma(s, \tilde{X}^{k+1}_s,m^{k}_s) - \sigma(s, \tilde{X}^*_s,m^{*}_{\lambda,s})| \leq K_2 (|\tilde{X}^{k+1}_s - \tilde{X}^*_s|+\mathcal{W}_2(m^k_s, m^{*}_{\lambda,s}))$.

    Substituting all these bounds into \eqref{eq:dx_gronwall_base}, we obtain
    \begin{align*}
        \mathbb{E} \left[ \sup_{s \in[0,t]} |\tilde{X}^{k+1}_s - \tilde{X}^{*}_s|^2 \right] &\leq \tilde{C}_\lambda \int_0^t \mathbb{E} \left[ \sup_{r \in [0,s]} |\tilde{X}^{k+1}_r - \tilde{X}^{*}_r|^2 \right] ds \\
        &\quad + \tilde{C}_\lambda \int_0^t \left( \mathcal{W}_2^2(m^k_s, m^{*}_{\lambda,s}) + \sup_{x \in \mathbb{R}^d} |D_x V^k(s,s,x) - D_x V^{*}(s,s,x)|^2 \right) ds,
    \end{align*}
    where $\tilde{C}_\lambda > 0$ is a generic constant depending on $A_0$, $C_\lambda$, and the bound $A_\lambda$. Applying Gronwall's inequality yields
    \begin{align*}
        &\mathbb{E} \left[ \sup_{t \in [0,T]} |\tilde{X}^{k+1}_t - \tilde{X}^*_t|^2 \right] \\
        &\leq \tilde{C}_\lambda T e^{\tilde{C}_\lambda T} \left( \sup_{t \in[0,T]} \mathcal{W}_2^2(m^k_t, m^{*}_{\lambda,t}) + \sup_{(t,x) \in [0,T] \times \mathbb{R}^d} |D_x V^k(t,t,x) - D_x V^{*}(t,t,x)|^2 \right).
    \end{align*}
    Taking the square root on both sides and setting $C_1(T) := \sqrt{\tilde{C}_\lambda T e^{\tilde{C}_\lambda T}}$, it is evident that $\lim_{T \to 0} C_1(T) = 0$, which completes the proof.
\end{proof}

\begin{Lemma}\label{lm:V_contraction}
   Under the assumptions of Theorem \ref{thm:PIA_convergence}, there exist non-decreasing functions $C_2(T)$ and $C_3(T)$ satisfying $\lim_{T\rightarrow 0}C_3(T)=0$ such that
    \begin{align*}
        \|V^{k+1}-V^{*}\|_{\mathcal{C}^{0,1}_{[0,T]}} &\leq K_6 C_2(T) \mathcal{W}_2(m^{k}_T, m^{*}_{\lambda,T}) \\
        &\quad + C_{3}(T) \left( d(m^k, m^{*}_{\lambda}) + \|V^{k+1}-V^{*}\|_{\mathcal{C}^{0,1}_{[0,T]}} + \|V^{k}-V^{*}\|_{\mathcal{C}^{0,1}_{[0,T]}} \right).
    \end{align*}
\end{Lemma}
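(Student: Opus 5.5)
The plan is to study the difference $\bar V := V^{k+1}-V^*$ through a linear parabolic equation and estimate its $\Cc^{0,1}$ $t$-norm by a Duhamel/Bismut--Elworthy--Li representation. For each fixed $t$, $V^*$ satisfies \eqref{eq:HJBn1}--\eqref{eq:HJBn2} with $(\pi^*_\lambda,m^*_\lambda,\lambda)$. Following \eqref{eq:iteraPDEk}, this equation can be rewritten with the coefficients $\sigma(\cdot,\cdot,m^*_{\lambda,\cdot})$ and $\tilde b(\cdot,\cdot,m^*_{\lambda,\cdot},\pi^*_\lambda)$, using the identity $\tilde r(0,\cdot)+\lambda\Hc(\pi)+\tilde b\cdot p=H$ at the Gibbs policy.

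Subtracting the two equations and freezing the operator of the $V^{k+1}$ equation, namely $\mathcal L^{k}:=\partial_s+\tfrac12\operatorname{tr}(\sigma\sigma^T(s,x,m^k_s)D_x^2)+\tilde b(s,x,m^k_s,\pi^{k+1})\cdot D_x$, we obtain $\mathcal L^k\bar V+g^k=0$ with terminal value $\bar V(t,T,x)=F(t,x,m^k_T)-F(t,x,m^*_{\lambda,T})$. The source $g^k$ collects the following differences:
\begin{itemize}
\item[(a)] $\tfrac12\operatorname{tr}\big((\sigma\sigma^T(m^k)-\sigma\sigma^T(m^*))D_x^2V^*\big)$;
\item[(b)] $(\tilde b^{k+1}-\tilde b^*)\cdot(D_xV^*(t,s,x)-\delta(s-t)D_xV^*(s,s,x))$, together with the analogous $\tilde r(s-t)-\delta\tilde r(0)$ differences in policy and measure;
\item[(c)] $\delta(s-t)\big[H(s,x,D_xV^k(s,s,x),m^k_s)-H(s,x,D_xV^*(s,s,x),m^*_{\lambda,s})\big]$;
\item[(d)] $-\tilde b^{k+1}\cdot\delta(s-t)(D_xV^k-D_xV^*)(s,s,x)$.
\end{itemize}
Lemma~\ref{lm:derivatives_bounds} controls each term. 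Term (c) is bounded by $K_1|D_x(V^k-V^*)(s,s,x)|+K_2(A^*+1)\mathcal W_2(m^k_s,m^*_{\lambda,s})$, and term (d) by $K_1\|V^k-V^*\|_{\Cc^{0,1}_{[0,T]}}$. Terms (a) and (b) are bounded by $C_\lambda$ times $d(m^k,m^*_\lambda)+\|V^k-V^*\|_{\Cc^{0,1}_{[0,T]}}$, using \eqref{est:Gamma}, \eqref{est:Gamma_m}, and the uniform bounds \eqref{eq:iteraPDEbound}, in particular $\|D^2_xV^*\|_\infty\le A_\lambda$. This gives $\|g^k\|_\infty\le C_\lambda\big(d(m^k,m^*_\lambda)+\|V^k-V^*\|_{\Cc^{0,1}_{[0,T]}}\big)$. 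If a cleaner split is needed, some terms can instead be placed on the $V^{k+1}$ side and bounded by $\|V^{k+1}-V^*\|_{\Cc^{0,1}}$; this is why that quantity appears on the right-hand side of the statement.

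Next we write $\bar V(t,s,x)=\E_{s,x}\big[\bar V(t,T,Y_T)+\int_s^T g^k(t,l,Y_l)\,dl\big]$, where $Y$ is the diffusion with the generator of $\mathcal L^k$. Its coefficients are Lipschitz in $x$ by Lemma~\ref{lm:derivatives_bounds} and the bound $A_\lambda$, and $\sigma$ is $C^1$ in $x$ by Assumption~\ref{assume.PIA}. This representation yields $|\bar V|\le K_6\mathcal W_2(m^k_T,m^*_{\lambda,T})+T\|g^k\|_\infty$.

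For the gradient, we differentiate the terminal part directly and obtain $D_x\E[\bar V(t,T,Y_T)]=\E[D_xY_T^{\top}D_x\bar V(t,T,Y_T)]$, bounded by $C_2(T)K_6\mathcal W_2(m^k_T,m^*_{\lambda,T})$ via $\|D_xF(m^k)-D_xF(m^*)\|\le K_6\mathcal W_2$ and first-variation moment bounds. For the running part we use the Bismut--Elworthy--Li formula
$$D_x\E_{s,x}[g^k(t,l,Y_l)]=\E_{s,x}\Big[g^k(t,l,Y_l)\tfrac{1}{l-s}\int_s^l(\sigma^{-1}D_xY_r)^\top dW_r\Big],$$
which gives $\big|D_x\int_s^T\E[g^k]\,dl\big|\le C\int_s^T(l-s)^{-1/2}dl\,\|g^k\|_\infty\le C\sqrt T\,\|g^k\|_\infty$. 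Combining these pieces gives the claim with $C_3(T)=C_\lambda(T+\sqrt T)e^{CT}\to0$.

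The main obstacle is rigorously justifying the Bismut--Elworthy--Li step with only H\"older-in-time, Lipschitz-in-$x$ coefficients whose Lipschitz constants depend on $\lambda$ and on the iterates. The first attempt would approximate the coefficients by smooth ones, apply the formula, and pass to the limit using uniform ellipticity and the uniform bounds \eqref{eq:iteraPDEbound}. A second difficulty is keeping the terminal contribution free of a small-time factor, which is why it appears separately with coefficient $K_6C_2(T)$.
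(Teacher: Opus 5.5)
Your argument is correct, but it splits the difference differently from the paper. The paper does not freeze the operator of the $V^{k+1}$-equation. It uses one driftless auxiliary diffusion $dX_l^{s,x}=\sigma(l,X_l^{s,x},m^*_{\lambda,l})\,dW_l$ for both $V^{k+1}$ and $V^*$, and moves every first-order term into the sources $f^{k+1}$ and $f^*$, along with the diffusion mismatch $\tfrac12\operatorname{tr}\big((\sigma\sigma^T(m^k)-\sigma\sigma^T(m^*_\lambda))D_x^2V^{k+1}\big)$. It then applies Feynman--Kac and Bismut--Elworthy--Li along this common process.

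\textbf{Paper's route.} The first-variation process involves only $D_x\sigma$, which exists by Assumption~\ref{assume.PIA}. So BEL applies directly, with no regularization. The price is that $\Delta f^{k+1}$ contains $\tilde b^*\cdot D_x\Delta V^{k+1}(t,s,x)$. That is exactly why $\|V^{k+1}-V^*\|_{\Cc^{0,1}_{[0,T]}}$ appears on the right-hand side. It is later absorbed through the factor $1-\tilde C(T_0)$ in the proof of Theorem~\ref{thm:PIA_convergence}.

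\textbf{Your route.} Freezing $\mathcal{L}^k$ puts only $D_xV^*$, $D_x^2V^*$ and iterate-$k$ differences into $g^k$. You therefore get the sharper bound without any $\|V^{k+1}-V^*\|$ term, and the stated inequality follows trivially. Your cost is the one you flagged: the drift $\tilde b^{k+1}$ of $Y$ is only Lipschitz in $x$, because $b$ is. Hence $\nabla Y$ and BEL require a mollification step, with constants depending on the Lipschitz constant of $\tilde b^{k+1}$, which is uniform in $k$ through $A_\lambda$.

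That step does go through. The BEL bound $C\|g\|_\infty(l-s)^{-1/2}$ depends only on Lipschitz constants, sup bounds and ellipticity. Since $g^k(t,l,\cdot)$ is continuous, the Lipschitz-in-$x$ estimate passes to the limit. For the terminal term you can avoid differentiability altogether by synchronous coupling, using $\E|Y_T^{x}-Y_T^{y}|\le e^{CT}|x-y|$ together with the $K_6$-Lipschitz bound on $D_xF$ in the measure.
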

\begin{proof}
    To establish these estimates, we will use probabilistic representation formulae, as proposed in \cite{ma2025convergence}. To this end, fix $(s, x) \in[0, T) \times \mathbb{R}^d$ and consider the auxiliary state process
\begin{equation*}
	X_l^{s, x} = x + \int_s^l \sigma\left(r, X_r^{s, x},m^*_{\lambda,r}\right) d W_r, \quad l \in [s, T].
\end{equation*}
As $V^{k+1}$ is the classical solution to \eqref{eq:iteraPDEk}, Feynman--Kac's formula implies
\begin{align}
	V^{k+1}(t, s, x) &= \mathbb{E} \left[ F(t, X_T^{s, x},m^k_T) + \int_s^T f^{k+1}(t, r,  X_r^{s, x}) dr \right], \label{eq:rep1}
\end{align}
where
\begin{align}
	f^{k+1}(t, s, x) &:= \tilde{b}(s,x, m^k_s, \pi^{k+1}(s,x)) \cdot \left[ D_x V^{k+1}(t,s, x) - \delta(s-t)D_x V^{k}(s,s,x) \right]\nonumber \\
    &\quad+\frac{1}{2} \operatorname{tr}\left(((\sigma\sigma^T)(s,x, m^k_s)-(\sigma\sigma^T)(s,x, m^*_{\lambda,s})) D^2_x V^{k+1} (t,s, x)\right)     \nonumber        \\
		&\quad + \tilde{r}(s-t,x, m^k_s,\pi^{k+1}(s,x)) - \delta(s-t)\tilde{r}(0,x,m^k_s,\pi^{k+1}(s,x))\nonumber \\
		&\quad + \delta(s-t)H(s,x, D_x V^{k}(s,s,x), m^k_s).\nonumber
\end{align}
By applying the Bismut--Elworthy--Li formula (see, e.g., \cite{Bismut1984LargeDeviation, elworthy1994formulae, ma2002representation}) and following the arguments in \cite{ma2025convergence}, we obtain a representation formula for $D_xV^{k+1}$:
\begin{align}
	D_xV^{k+1}(t,s,x) &= \mathbb{E}\left[ (\nabla X_T^{s, x})^\top D_xF(t,  X_T^{s, x},m^k_T) + \int_s^T f^{k+1}(t, r, X_r^{s, x}) N_r^{s, x} d r \right]. \label{rep_first1}
\end{align}
Here, $\nabla X^{s,x}\in\mathbb{R}^{d\times d}$ with the $i$-th column $\partial_{x_i} X$. Moreover, using Einstein summation for repeated indices, we have
\begin{equation*}
	\begin{aligned}
		& \nabla X_l^{s, x} = I_d + \int_s^l D_x\sigma^i\left(r, X_r^{s, x},m^{*}_{\lambda,r}\right) \nabla X_r^{s, x} d W_r^i,
	\end{aligned}
\end{equation*}
where $\sigma^i$ is the $i$-th column of $\sigma$. Similarly, by setting $\check{\sigma}:=\sigma^{-1}$ to be the inverse matrix, we define
\begin{equation*}
	\begin{aligned}
		& N_l^{s, x}:=\frac{1}{l-s} \int_s^l\left(\check{\sigma}\left(r, X_r^{s, x},m^{*}_{\lambda,r}\right) \nabla X_r^{s, x}\right)^{\top} d W_r.
	\end{aligned}
\end{equation*}
Furthermore, it is straightforward to check that
\begin{equation}\label{estimate_1}
	\mathbb{E}\left[\left|\nabla X_l^{s, x}\right|^2\right] \leq C e^{C(l-s)}, \quad \mathbb{E}\left[\left|N_l^{s, x}\right|^2\right] \leq \frac{C e^{C(l-s)}}{l-s}.
\end{equation}
Similarly, we obtain representation formulae for $V^*$ and $D_x V^*$:
\begin{align}
   V^{*}(t, s, x) &= \mathbb{E} \left[ F(t, X_T^{s, x},m^*_{\lambda,T}) + \int_s^T f^{*}(t, r,  X_r^{s, x}) dr \right]\label{rep:v*}, \\
   D_xV^{*}(t,s,x) &= \mathbb{E}\left[ (\nabla X_T^{s, x})^\top D_xF(t,  X_T^{s, x},m^*_{\lambda,T}) + \int_s^T f^{*}(t, r, X_r^{s, x}) N_r^{s, x} d r \right],\label{rep:dv*}
\end{align}
where
\begin{align*}
    f^*(t,s,x)&=\tilde{b}(s,x, m^*_{\lambda,s}, \pi^{*}_{\lambda}(s,x)) \cdot \left[ D_x V^{*}(t,s, x) - \delta(s-t)D_x V^{*}(s,s,x) \right]\nonumber \\
		&\quad + \tilde{r}(s-t,x, m^*_{\lambda,s},\pi^{*}_{\lambda}(s,x)) - \delta(s-t)\tilde{r}(0,x,m^*_{\lambda,s},\pi^{*}_{\lambda}(s,x))\nonumber \\
		&\quad + \delta(s-t)H(s,x, D_x V^{*}(s,s,x), m^*_{\lambda,s}).
\end{align*}
For simplicity, denote $ \Delta \phi^{k+1}:=\phi^{k+1}-\phi^*$ for $\phi=f,V$. Then
\begin{align*}
    &|\Delta f^{k+1}(t,s,x)|\\
    &\leq \left| \bigg(\tilde{b}(s,x, m^k_s, \pi^{k+1}(s,x))-\tilde{b}(s,x, m^{*}_{\lambda,s}, \pi^{*}_{\lambda}(s,x))\bigg)\cdot \left[ D_x V^{k+1}(t,s, x) - \delta(s-t)D_x V^{k}(s,s,x) \right] \right| \\&\quad + \left| \tilde{b}(s,x, m^{*}_{\lambda,s}, \pi^{*}_{\lambda}(s,x)) \cdot \left[ D_x \Delta V^{k+1}(t,s,x) - \delta(s-t)D_x\Delta V^{k}(s,s,x) \right] \right| \\
    &\quad + \left|\frac{1}{2} \operatorname{tr}\left(((\sigma\sigma^T)(s,x, m^k_s)-(\sigma\sigma^T)(s,x, m^*_{\lambda,s})) D^2_x V^{k+1} (t,s, x)\right) \right| \\
    &\quad + \left| \tilde{r}(s-t,x, m^k_s,\pi^{k+1}(s,x)) - \tilde{r}(s-t,x, m^{*}_{\lambda,s},\pi^{*}_{\lambda}(s,x)) \right| \\
    &\quad + \delta(s-t)\left| \tilde{r}(0,x,m^k_s,\pi^{k+1}(s,x)) - \tilde{r}(0,x,m^{*}_{\lambda,s},\pi^{*}_{\lambda}(s,x)) \right| \\
    &\quad + \delta(s-t)\left| H(s,x, D_x V^{k}(s,s,x), m^k_s) - H(s,x, D_x V^{*}(s,s,x), m^{*}_{\lambda,s}) \right|.
\end{align*}
In the same fashion as \eqref{est:tb}, and using Lemma \ref{lm:derivatives_bounds}, we have
\begin{align*}
     &\bigg|\tilde{b}(s,x, m^k_s, \pi^{k+1}(s,x))-\tilde{b}(s,x, m^{*}_{\lambda,s}, \pi^{*}_{\lambda}(s,x))\bigg| + \left|\tilde{r}(s-t,x, m^k_s,\pi^{k+1}(s,x))-\tilde{r}(s-t,x, m^{*}_s,\pi^{*}_{\lambda}(s,x))\right| \\&\quad + \left|(\sigma\sigma^T)(s,x, m^k_s)-(\sigma\sigma^T)(s,x, m^*_{\lambda,s})\right| + \left|\tilde{r}(0,x,m^k_s,\pi^{k+1}(s,x))-\tilde{r}(0,x,m^{*}_{\lambda,s},\pi^{*}_{\lambda}(s,x))\right| \\
     &\quad + \left|H(s,x, D_x V^{k}(s,s,x), m^k_s)-H(s,x, D_x V^{*}(s,s,x), m^{*}_{\lambda,s})\right| \\
     &\leq C_{\lambda} \left( \mathcal{W}_2 (m^k_s,m^{*}_{\lambda,s}) + |D_{x} V^k(s,s,x)-D_x V^{*}(s,s,x)| \right),
\end{align*}
where $C_\lambda$ is a finite constant depending on $\lambda$, $K_1$, $K_2$, and the uniform bound $A_{\lambda}$. Therefore,
\begin{equation*}
     |\Delta f^{k+1}(t,s,x)| \leq C_{\lambda} \left( \mathcal{W}_2 (m^k_s,m^{*}_{\lambda,s}) + |D_{x} \Delta V^{k}(s,s,x)| + |D_x \Delta V^{k+1}(t,s,x)| \right).
\end{equation*}
It follows from \eqref{eq:rep1}, \eqref{rep_first1}, \eqref{rep:v*} and \eqref{rep:dv*} as well as the estimate \eqref{estimate_1} that
\begin{align*}
     &|\Delta V^{k+1}(t,s,x)| \\
     &\leq \mathbb{E} \left[ |F(t, X_T^{s, x},m^k_T)-F(t, X_T^{s, x},m^{*}_{\lambda,T})| + \int_s^T |\Delta f^{k+1}(t, r,  X_r^{s, x})| dr \right]\\
     &\leq K_6 \mathcal{W}_2(m^k_T,m^{*}_{\lambda,T}) \\
     &\quad + C_{\lambda}T\bigg( d (m^k,m^{*}_{\lambda}) + \sup_{(s,x)\in[0,T]\times\R^d}|D_{x} \Delta V^{k}(s,s,x)| + \sup_{(t,s,x)\in\Delta_{[0,T]}\times\R^d}|D_x \Delta V^{k+1}(t,s,x)| \bigg),\\
    &|D_x\Delta V^{k+1}(t,s,x)|\\
    &\leq \mathbb{E} \left[ |\nabla X_T^{s, x}||D_xF(t, X_T^{s, x},m^k_T)-D_xF(t, X_T^{s, x},m^{*}_{\lambda,T})| + \int_s^T |\Delta f^{k+1}(t, r,  X_r^{s, x})| |N_r^{s, x}| dr \right]\\
    &\leq C_{\lambda} e^{C T} \Bigg( K_6 \mathcal{W}_2(m^k_T,m^{*}_{\lambda,T}) \\
    &\quad + \sqrt{T}\bigg( d (m^k,m^{*}_{\lambda}) + \sup_{(s,x)\in[0,T]\times\R^d}|D_{x} \Delta V^{k}(s,s,x)| + \sup_{(t,s,x)\in\Delta_{[0,T]}\times\R^d}|D_x \Delta V^{k+1}(t,s,x)| \bigg) \Bigg).
\end{align*}
Taking the supremum over all $(t,s,x) \in \Delta_{[0,T]} \times \mathbb{R}^d$ on both sides of the estimates for $|\Delta V^{k+1}|$ and $|D_x \Delta V^{k+1}|$, and summing them, we obtain the following upper bound in the $\mathcal{C}^{0,1}_{[0,T]}$ norm:
\begin{align*}
    &\|V^{k+1} - V^*\|_{\mathcal{C}^{0,1}_{[0,T]}} \\&= \sup_{(t,s,x)\in \Delta_{[0,T]} \times \mathbb{R}^d} |V^{k+1}(t,s,x) - V^*(t,s,x)| + \sup_{(t,s,x)\in \Delta_{[0,T]} \times \mathbb{R}^d} |D_x V^{k+1}(t,s,x) - D_x V^*(t,s,x)| \\
    &\leq (1 + C_{\lambda} e^{C T}) K_6 \mathcal{W}_2(m^k_T, m^{*}_{\lambda,T}) \\
    &\quad + C_{\lambda} (T + e^{C T} \sqrt{T}) \bigg( d(m^k, m^{*}_{\lambda}) + \sup_{(s,x)\in[0,T]\times\R^d} |D_x \Delta V^{k}(s,s,x)| + \sup_{(t,s,x)\in\Delta_{[0,T]}\times\R^d} |D_x \Delta V^{k+1}(t,s,x)| \bigg)\\
&\leq (1 + C_{\lambda} e^{C T}) K_6 \mathcal{W}_2(m^k_T, m^{*}_{\lambda,T}) \\
    &\quad + C_{\lambda} (T + e^{C T} \sqrt{T}) \Big( d(m^k, m^{*}_{\lambda}) + \|V^{k+1} - V^*\|_{\mathcal{C}^{0,1}_{[0,T]}} + \|V^k - V^{*}\|_{\mathcal{C}^{0,1}_{[0,T]}} \Big).
\end{align*}
Define $C_2(T)$ and $C_3(T)$ by
\begin{equation*}
    C_2(T) := 1 + C_{\lambda} e^{C T}, \quad \text{and} \quad C_3(T) := C_{\lambda} (T + e^{C T} \sqrt{T}).
\end{equation*}
Clearly, both $C_2(T)$ and $C_3(T)$ are non-decreasing functions of $T$, and it is immediate that $ \lim_{T \to 0} C_3(T) = 0$, which completes the proof.
\end{proof}

\begin{proof}[{\bf Proof of Theorem \ref{thm:PIA_convergence}:}]
Again, let $\Delta V^{k+1} := V^{k+1} - V^*$. It follows from Lemmas \ref{lm:measure_contraction} and \ref{lm:V_contraction} that
\begin{align*}
    &d(m^{k+1},m^*_{\lambda}) + \|\Delta V^{k+1}\|_{\Cc^{0,1}_{[0,T]}} \\
    &\leq K_6 C_{2}(T)\mathcal{W}_2(m^{k}_T, m^{*}_{\lambda,T}) + \tilde{C}(T) \left( d(m^k, m^{*}_{\lambda}) + \|\Delta V^{k+1}\|_{\Cc^{0,1}_{[0,T]}} + \|\Delta V^{k}\|_{\Cc^{0,1}_{[0,T]}} \right),
\end{align*}
where $\tilde{C}(T) := \max\{C_1(T), C_3(T)\}$ is a non-decreasing function satisfying $\lim_{T\rightarrow0}\tilde{C}(T)=0$.

Choose $T_0 > 0$ sufficiently small such that $\tilde{C}(T_0) \leq \frac{1}{3}$. Then, for any $T \leq T_0$, we have
\begin{align*}
    \frac{2}{3} \Big( d(m^{k+1},m^*_{\lambda}) + \|\Delta V^{k+1}\|_{\Cc^{0,1}_{[0,T]}} \Big) &\leq d(m^{k+1},m^*_{\lambda}) + (1-\tilde{C}(T_0))\|\Delta V^{k+1}\|_{\Cc^{0,1}_{[0,T]}} \\
    &\leq \left( K_6 C_2(T_0) + \frac{1}{3} \right) d(m^k, m^{*}_{\lambda}) + \frac{1}{3} \|\Delta V^{k}\|_{\Cc^{0,1}_{[0,T]}}.
\end{align*}
Multiplying both sides by $\frac{3}{2}$, we obtain
\begin{align*}
    d(m^{k+1},m^*_{\lambda}) + \|\Delta V^{k+1}\|_{\Cc^{0,1}_{[0,T]}} &\leq \frac{3}{2}\left( K_6 C_2(T_0) + \frac{1}{3} \right) \Big( d(m^k, m^{*}_{\lambda}) + \|\Delta V^{k}\|_{\Cc^{0,1}_{[0,T]}} \Big).
\end{align*}
Then, we choose $\bar{K}_6 > 0$ small enough such that $\frac{3}{2} \bar{K}_6 C_2(T_0) \leq \frac{1}{3}$. For any $K_6 \leq \bar{K}_6$, it holds that
\begin{align}\label{ineq:contraction}
    d(m^{k+1},m^*_{\lambda}) + \|V^{k+1}-V^{*}\|_{\Cc^{0,1}_{[0,T]}} \leq \frac{5}{6} \Big( d(m^k, m^{*}_{\lambda}) + \|V^{k}-V^{*}\|_{\Cc^{0,1}_{[0,T]}} \Big).
\end{align}
Therefore,  
\begin{equation*}
    \lim_{k\rightarrow\infty} \left( \|V^{k}-V^*\|_{\Cc^{0,1}_{[0,T]}} + d(m^k,m^*_\lambda) \right) = 0.
\end{equation*}
Recall that $J^k(t,x) = V^k(t,t,x)$. This uniform convergence directly implies that the sequence $(J^{k}, m^{k})$ generated by the PIA converges to a unique limit $(J^*_\lambda, m^*_\lambda)$, where $J^*_\lambda(t,x) := V^*(t,t,x)$. Moreover, the corresponding limit pair coincides with $(\pi^*_\lambda, m^*_\lambda)$ and constitutes a regularized equilibrium with entropy weight $\lambda$.
\end{proof}

{\begin{Corollary}\label{cor:uniqueness}
    Under the assumptions of Theorem \ref{thm:PIA_convergence}, $(J^*_\lambda, m^*_\lambda)$ is the unique fixed point of $\Phi_\lambda$ in the space $\Cc^{0,2}([0, T]\times \R^d)\times \mathscr{M}_\nu$, and $(V^*, m^*_\lambda)$ is the unique classical solution to the system \eqref{eq:cha.HJBentropy_prime}--\eqref{eq:cha.m} in the space $\Cc^{1,2}_{[0, T]}\times \mathscr{M}_\nu$.
\end{Corollary}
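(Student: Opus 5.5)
The plan is to reuse the contraction estimate behind Theorem \ref{thm:PIA_convergence}, with the reference pair $(V^*,m^*_\lambda)$ replaced by an arbitrary second fixed point. Suppose $(\hat J,\hat m)\in\Cc^{0,2}([0,T]\times\R^d)\times\mathscr{M}_\nu$ is any fixed point of $\Phi_\lambda$. Set $\hat\pi(t,x,a):=\Gamma_\lambda(t,x,D_x\hat J(t,x),\hat m_t,a)$ and $\hat V:=V^{\hat\pi,\hat m}_\lambda$. Then $\hat J(t,x)=\hat V(t,t,x)$ and $\hat m=\mu^{\hat\pi,\hat m}$.

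The first step is to place $\hat V$ in the class where the PIA estimates apply. Since $D_x\hat J$ is bounded, Lemma \ref{lm:iteration.estimateentropy} gives that $\hat V$ is the classical solution of \eqref{eq:prop.iteraPDE}, and \eqref{eq:est.mu} gives $\hat m\in\mathscr{M}_\nu^{\kappa,C^*}$. The logarithmic self-improvement \eqref{est:2} then applies: $\|\hat J\|_{\Cc^{0,1}_\alpha}\le A_0(1+\ln(1+\|\hat J\|_{\Cc^{0,1}_\alpha}))$, which forces $\|\hat J\|_{\Cc^{0,1}_\alpha}\le A^*$ by the choice of $A^*$. This requires $\|\hat J\|_{\Cc^{0,1}_\alpha}<\infty$. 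I expect to get this by first bootstrapping Hölder regularity of $D_x\hat J$ from the PDE via \eqref{est:dig1} applied with the given bounded input. Once the bound holds, Lemma \ref{lm:iteration.estimateentropy} also gives the $A_\lambda$ bound, so $(\hat J,\hat m)\in\mathcal{E}_\lambda$.

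With $\hat V$ in $\mathcal{E}_\lambda$, I will repeat Lemmas \ref{lm:measure_contraction} and \ref{lm:V_contraction} with $(V^k,m^k)$ and $(V^{k+1},m^{k+1})$ both equal to $(\hat V,\hat m)$. Those proofs used only that the comparison pair satisfies \eqref{eq:HJBn1}--\eqref{eq:consisn} together with the uniform bounds \eqref{eq:iteraPDEbound}, so they carry over. Under $T\le T_0$ and $K_6\le\bar K_6$, inequality \eqref{ineq:contraction} becomes
\[
d(\hat m,m^*_\lambda)+\|\hat V-V^*\|_{\Cc^{0,1}_{[0,T]}}\le\tfrac56\big(d(\hat m,m^*_\lambda)+\|\hat V-V^*\|_{\Cc^{0,1}_{[0,T]}}\big).
\]
The left side is finite by the bounds above, so both terms vanish. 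Hence $\hat m=m^*_\lambda$, $\hat V=V^*$, and $\hat J=J^*_\lambda$.

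For the PDE system, take any classical solution $(u,m)\in\Cc^{1,2}_{[0,T]}\times\mathscr{M}_\nu$ of \eqref{eq:cha.HJBentropy_prime}--\eqref{eq:cha.m} with $\pi^*$ given by \eqref{eq:cha.pi}. The Itô argument in the proof of Theorem \ref{thm:existence_regu}(i) gives $u=V^{\pi^*,m}_\lambda$, and that argument uses only $u\in\Cc^{1,2}_{[0,T]}$, not the Lipschitz-in-$t$ condition. Therefore $(u(t,t,\cdot),m)$ is a fixed point of $\Phi_\lambda$ lying in $\Cc^{0,2}\times\mathscr{M}_\nu$. The first part then gives $m=m^*_\lambda$, and $u=V^{\pi^*,m}_\lambda=V^*$ because the policy is now determined.

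The main obstacle is the a priori step. An arbitrary fixed point in $\Cc^{0,2}$ must be shown to satisfy the $\lambda$-independent bound $A^*$ and the $A_\lambda$ bound, since the constants $C_1,C_3$ in the contraction lemmas depend on them. If the logarithmic bootstrap proves delicate, my fallback is to rerun those lemmas with constants depending on the $\Cc^{0,2}$ norm of the fixed point. That, however, would make $T_0$ depend on the fixed point, so the bootstrap is the preferred route.
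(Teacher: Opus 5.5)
Your proposal is correct and follows essentially the same route as the paper. An arbitrary fixed point is first placed in $\mathcal{E}_\lambda$ (Step 2 of Lemma \ref{lm:iteration.estimateentropy} for the measure flow, then the logarithmic estimate \eqref{est:2} and the $A_\lambda$ bound for $\hat J$). After that, \eqref{ineq:contraction}, applied to the PIA started at this stationary point, forces it to coincide with $(J^*_\lambda,m^*_\lambda)$, and the PDE statement follows from the It\^o identification $u=V^{\pi^*,m}_\lambda$. Your explicit check that $\|\hat J\|_{\Cc^{0,1}_\alpha}<\infty$ fills a step the paper's ``otherwise $\|J^1\|<\|J^0\|$'' argument uses only implicitly; that finiteness is available from \eqref{eq:lm.V3}, which depends only on $\|D_x\hat J\|_{L^\infty}$, combined with \eqref{est:dig1}.
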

\begin{proof}
  It suffices to prove uniqueness. First, under the assumptions in Theorem \ref{thm:PIA_convergence}, \eqref{ineq:contraction} implies that
$(J^*_\lambda, m^*_{\lambda})$ is the unique fixed point of $\Phi_\lambda$ within the space $\Mc_\lambda\times \mathscr{M}_{\nu}^{\kappa,C^*}$.
Moreover, 
Lemma \ref{lm:iteration.estimateentropy} implies that any pair $(v, m)\in \Cc^{1,2}_{[0, T]}\times \mathscr{M}_\nu$ solving \eqref{eq:cha.HJBentropy_prime}--\eqref{eq:cha.m} corresponds to a fixed point $(J, m)\in  \Cc^{0,2}([0, T]\times \R^d)\times \mathscr{M}_\nu$ of $\Phi_\lambda$ with $J(t,x)=v(t,t,x)$.

Let $(\tilde J, \tilde m)\in  \Cc^{0,2}([0, T]\times \R^d)\times \mathscr{M}_\nu$ be an arbitrary fixed point of $\Phi_\lambda$.  Choosing the initial data in the PIA as $(J^0, m^0)= (\tilde J, \tilde m)$, Step 2 in the proof of Lemma \ref{lm:iteration.estimateentropy} yields that $m^1\in \mathscr{M}_\nu^{\kappa, C^*}$. As $(\tilde J,\tilde m)$ is a fixed point, we have $(J^1,m^1)=(J^0,m^0)$, and therefore $$\tilde m\in\mathscr{M}_\nu^{\kappa, C^*}.$$ Also, Step 3 in the same proof shows that $\|J^{1}\|_{\Cc^{0,1}_{\alpha}([0,T]\times\R^d)}\leq A^*$. Otherwise, the sublinear growth estimate  \eqref{est:2} would imply $$\|J^{1}\|_{\Cc^{0,1}_{\alpha}([0,T]\times\R^d)}<\|J^0\|_{\Cc^{0,1}_{\alpha}([0,T]\times\R^d)},$$
which contradicts the fixed-point identity $(J^1,m^1)=(J^0,m^0)$. Then, Step 3 further implies that $\tilde J = J^0\in \Mc_\lambda$. As a result, any fixed point of $\Phi_\lambda$ in $\Cc^{0,2}([0, T]\times \R^d)\times \mathscr{M}_\nu$ must belong to the subset $\Mc_\lambda\times \mathscr{M}_\nu^{\kappa, C^*}$. Since $\Phi_\lambda$ admits a unique fixed point in this subset, $(J^*_\lambda, m^*_\lambda)$ is the unique fixed point in the space $\Cc^{0,2}([0, T]\times \R^d)\times \mathscr{M}_\nu$. Moreover, $(V^*, m^*_\lambda)$ is the unique classical solution to \eqref{eq:cha.HJBentropy_prime}--\eqref{eq:cha.m} in the space $\Cc^{1,2}_{[0, T]}\times \mathscr{M}_\nu$.
\end{proof}}

\vspace{0.5in}
\noindent
\textbf{Acknowledgements}:
Erhan Bayraktar is supported in part by the National Science Foundation under grant DMS-2507940 and by the
Susan M. Smith Chair. Zhenhua Wang  is supported by Shandong Excellent Young Scientists Fund Program (Overseas) under grant No. 2025HWYQ--022 and National Natural Science Foundation of China (NSFC) under grant no.12501659.
Xiang Yu is supported by the Hong Kong RGC General Research Fund (GRF) under grant no. 15211524 and grant no. 15214125.

\ \\
{
\small
\bibliographystyle{siam}
\bibliography{ref}
}
\end{document}